%
\documentclass[12pt, reqno]{amsart}
\usepackage{amsmath, amsthm, amscd, amsfonts, amssymb, graphicx, color, mathrsfs}
\usepackage{pstricks,pst-node}
\usepackage[bookmarksnumbered, colorlinks, plainpages]{hyperref}
\usepackage[all]{xy} 
\usepackage{slashed}


\setlength{\textwidth}{15.2cm}
\setlength{\textheight}{22.7cm}
\setlength{\topmargin}{0mm}
\setlength{\oddsidemargin}{3mm}
\setlength{\evensidemargin}{3mm}
\setlength{\footskip}{1cm}


\newtheorem{theorem}{Theorem}[section]
\newtheorem{lemma}[theorem]{Lemma}

\newtheorem{proposition}[theorem]{Proposition}
\newtheorem{corollary}[theorem]{Corollary}

\theoremstyle{definition}
\newtheorem{definition}[theorem]{Definition}
\newtheorem{example}[theorem]{Example}
\newtheorem{openproblem}[theorem]{Open problem}

\theoremstyle{remark}
\newtheorem{remark}[theorem]{Remark}
\numberwithin{equation}{section}

\begin{document}
\setcounter{page}{1}

\title[Nuclearity and Schatten classes of Fourier integral operators]{ Nuclearity, Schatten-von Neumann classes, distribution of eigenvalues and $L^p$-$L^q$-boundedness of Fourier integral operators\\ on compact manifolds}

\author[D. Cardona]{Duv\'an Cardona}
\address{
  Duv\'an Cardona:
  \endgraf
  Department of Mathematics: Analysis, Logic and Discrete Mathematics
  \endgraf
  Ghent University, Belgium
  \endgraf
  {\it E-mail address} {\rm duvanc306@gmail.com, duvan.cardonasanchez@ugent.be}
  }
  
  \author[J. Delgado]{Julio Delgado}
\address{
  Julio Delgado:
  \endgraf
  Departamento de Matem\'aticas
  \endgraf
  Universidad del Valle
  \endgraf
  Cali-Colombia
    \endgraf
    {\it E-mail address} {\rm delgado.julio@correounivalle.edu.co}
  }

\author[M. Ruzhansky]{Michael Ruzhansky}
\address{
  Michael Ruzhansky:
  \endgraf
  Department of Mathematics: Analysis, Logic and Discrete Mathematics
  \endgraf
  Ghent University, Belgium
  \endgraf
 and
  \endgraf
  School of Mathematical Sciences
  \endgraf
  Queen Mary University of London
  \endgraf
  United Kingdom
  \endgraf
  {\it E-mail address} {\rm michael.ruzhansky@ugent.be, m.ruzhansky@qmul.ac.uk}
  }

\thanks{The authors are supported  by the FWO  Odysseus  1  grant  G.0H94.18N:  Analysis  and  Partial Differential Equations and by the Methusalem programme of the Ghent University Special Research Fund (BOF)
(Grant number 01M01021). Julio Delgado is also supported by Vic. Investigaciones Universidad del Valle CI-71352. Duv\'an Cardona has been supported by the FWO Fellowship
grant No 1204824N.  Michael Ruzhansky is also supported  by EPSRC grant 
 and EP/V005529/1.
}

\keywords{Fourier integral operators,  nuclear operators, quantum limits, pseudo-differential operators, spectrum}
\subjclass[2020]{35S30, 42B20; Secondary 42B37, 42B35}

\begin{abstract}  We link  Sogge's type $L^p$-estimates for eigenfunctions of the Laplacian on compact manifolds with the problem of providing criteria for the $r$-nuclearity of Fourier integral operators. The classes of Fourier integral operators $I^\mu_{\rho,1-\rho}(X,Y;C)$ considered here are associated with complex canonical relations $C$, i.e. they are parametrised by a complex-valued phase function. Our analysis also includes the case of real canonical relations, namely, the class of Fourier integral operators with real-valued phase functions.  The nuclear trace in the sense of Grothendieck is investigated for these operators as well as the validity of the Grothendieck-Lidskii formula on Lebesgue spaces.  Criteria are presented in terms of the factorisation condition for the complex canonical relation. Necessary and sufficient conditions for the membership of Fourier integral operators in Schatten-von Neumann classes are presented in the case where the Schatten index $r>0$ belongs to the set $\mathbb{N} $ and sharp sufficient conditions are presented in the general case $r>0$. In particular, we establish necessary and sufficient conditions for the membership of Fourier integral operators to the ideal of trace class operators and to the ideal of Hilbert-Schmidt operators on $L^2(X)$. The rate of decay of eigenvalues and the trace of Fourier integral operators is also investigated in both settings, in the Hilbert space case of $L^2(X)$ using Schatten-von Neumann properties and in the context of the Banach spaces $L^p(X),$ $1<p<\infty,$ utilising the notion of $r$-nuclearity.
\end{abstract} 

\maketitle

\allowdisplaybreaks

\tableofcontents

\section{Introduction}
\subsection{Outline}
Let $(X,g_X)$ and $(Y,g_Y)$ be compact Riemannian manifolds without boundary and of dimension $n.$ Let us consider the class $I^{\mu}_{\rho, 1-\rho}(X,Y;C)$ of Fourier integral operators of type $(\rho,1-\rho)$, see H\"ormander \cite{Ho}, Duistermaat and H\"ormander \cite{Duistermaat-Hormander:FIOs-2} and Melin and Sj\"ostrand \cite{Melin:SjostrandI,Melin:SjostrandII}. 

Inspired by the problem of the distribution of eigenvalues of Fourier integral operators, in this paper we investigate the nuclearity of these operators in the sense of Grothendieck \cite{Grothendieck1955}. Also, in some cases, we present a complete characterisation (i.e. we provide necessary and sufficient conditions) on the order of these operators to guarantee their membership to the Schatten-von Neumann classes.  More precisely, we address the following problems:
\begin{itemize}
\item Given a class of Fourier integral operators $I^{\mu}_{\rho,1-\rho}(X,Y;C),$ and an operator $T$ in this class, to find `sharp' conditions on the canonical relation $C$ and order $\mu$  to guarantee the $r$-nuclearity of $T:L^{p_1}(Y)\rightarrow L^{p_2}(X) $ in the sense of Grothendieck \cite{Grothendieck1955} for any $0<r\leq 1$. Here $C$ denotes a complex canonical relation allowing also for the case of real canonical relations.
\item When $X=Y$ and $p=p_1=p_2,$ to derive information on the spectral traces of Fourier integral operators in the class $ I^{\mu}_{\rho,1-\rho}(X;C):=I^{\mu}_{\rho,1-\rho}(X,X;C),$  on the Lebesgue space
 $L^p(X),$ or equivalently, to determine the validity of the Grothendieck-Lidskii formula for the operators in the class  $ I^{\mu}_{\rho,1-\rho}(X;C).$ In other words we investigate under which conditions the nuclear trace agrees with the spectral trace, given by the sum of the eigenvalues of the operator, see Grothendieck \cite{Grothendieck1955} and Reinov and Latif \cite{Reinov1,Reinov2}. 
 \item To investigate sharp sufficient conditions on the order $\mu,$ to guarantee the inclusion of the class $I^{\mu}_{\rho,1-\rho}(X;C),$ into the Schatten-von Neumann ideal    $S_r(L^2(X))$ of order $r,$  for any $0<r<\infty,$ as well as to provide information about the distribution of eigenvalues for these operators.
\item To provide necessary and sufficient conditions on the order $\mu,$ to guarantee the membership of the class $I^{\mu}_{1,0}(X;C),$ into the ideal of trace class operators $L^2(X)$. Also, we investigate necessary and sufficient conditions on $\mu$ in order to guarantee the membership of the class $I^{\mu}_{1,0}(X;C),$ in Schatten-von Neumann classes $S_r(L^2(X))$ on $L^2(X)$ with even Schatten index $r,$ namely with $r>0$ such that $r\in 2\mathbb{Z}$. Particulary the case of $S_2(L^2(X))$ corresponds to the Hilbert-Schmidt class on $L^2(X).$    
\end{itemize}

In his classical book \cite[Chapter 10]{Barry1} Barry Simon suggested that an extension of the nuclearity results therein obtained for the $L^2$ setting of Schr\"odinger equation to $L^p$ spaces could be relevant in order to reveal other spectral properties. In this work, we follow that spirit in order to get estimates for the rate of decay of eigenvalues of Fourier integral operators on compact manifolds  acting between $L^p$ spaces, with the help of Sogge's $L^p$ bounds for the eigenfunctions  of the Laplace-Beltrami operator for compact Riemannian manifold, and the aforementioned Reinov and Latif results in \cite{Reinov1, Reinov2} for the distribution of eigenvalues of nuclear operators, which give 
an improvement of the Grothendieck inequality for the distribution of eigenvalues in the case of $L^p$ spaces.

The study of nuclearity properties and the special case of Schatten-von Neumann classes have been an object of intensive research, and made part of some of the most fundamental problems in spectral theory in the recent years, due to the new achievements from the theoretical point of view as well as in the  applications in the study of the spectrum of relevant physical problems. Indeed, 
estimates for the rate of decay of eigenvalues for double layer potentials and Neumann-Poincar\'e operators have been obtained in  
\cite{Ando-Miya1, Fuk-Miya1, Miya-Su1}. The case of Schatten-von Neumann classes for integral operators has been recently studied in \cite{DR1} and by Volkov in \cite{Volkov}. 
For a general overview about the Schatten-von Neumann properties of pseudo-differential operators and of Fourier integral operators we refer the reader to the bibliographic discussion in Subsection \ref{Bibliographic:disc}.

In the next Subsection \ref{MR}  we present our main results. For this, we are required to fix the terminology about Schatten-von Neumann classes. Let $H$ be a Hilbert space, we recall that for any $r>0,$ a compact operator $T :H\rightarrow H $ belongs to the Schatten von-Neumann ideal $S_{r}(H),$ if the sequence of its singular values $\{s_{n}(T)\}_{n\in \mathbb{N}_0}$ (formed by the eigenvalues of the operator $\sqrt{T^*T}$) belongs to $\ell^r(\mathbb{N}_0),$ that is, if  $$\Vert T\Vert_{S_r}^r:=\sum_{n=1}^\infty s_{n}(T)^r<\infty.$$ For the aspects related to the ideal of $r$-nuclear operators on Banach spaces we refer to  Subsection \ref{Nuclearity:section}.

\subsection{Main results}\label{MR} 
As it was pointed out by H\"ormander \cite{Ho}, the $L^2$-boundedness of Fourier integral operators can be reduced, via the $T^*T$ method, to the $L^2$-boundedness of pseudo-differential operators. This argument also holds if the canonical relation $C$ is complex, see Melin and Sj\"ostrand \cite{Melin:SjostrandI,Melin:SjostrandII}. A consequence of this fact is that the singular values $s_{n}(T),$ of $T\in I^\mu_{\rho,1-\rho}(X,Y;C)$ and the singular values $s_{n}(T^*T)$ of the pseudo-differential operator $T^*T,$ are related by the formula $s_n(T^*T)=s_n(T)^2.$ Since sufficient conditions for the Schatten properties of pseudo-differential operators are well understood on closed manifolds, we immediately have sufficient conditions for Schatten-von Neumann properties in the case of Fourier integral operators, see Proposition \ref{Propo:Schatten}. In consequence, our main point here is to investigate the necessary conditions for Schatten properties of these operators, see Subsection \ref{Necess:Cond:sec}. In order  to do this analysis, we are going to apply as a fundamental tool the local Weyl formula of Zelditch (see Theorem \ref{T:LWF2:PSDO})
\begin{equation}\label{LWF2:PSDO:Intro}
    \lim_{\lambda\rightarrow\infty}\frac{1}{N(\lambda)}\sum_{k: \lambda_k\leq \lambda}(P\phi_k,\phi_k)=\smallint_{T^*\mathbb{S}(X)}\sigma_P d\mu_{L},
\end{equation} where $P$ is a pseudo-differential operator of order zero, and also  the local Weyl formula due to Zelditch and Kuznecov \cite{Zelditch:Kuznecov:1992} (see Theorem \ref{T:LWF}),
\begin{equation}\label{LWF:intro}
    \lim_{\lambda\rightarrow\infty}\frac{1}{N(\lambda)}\sum_{k: \lambda_k\leq \lambda}(F\phi_k,\phi_k)=\smallint_{S(\Lambda \cap \Delta_{S^*X\times S^*X } )}\sigma_F d\mu_{L},
\end{equation}where $F\in I^{0}_{1,0}(X;C).$ In these local formulae  $(\phi_k,\lambda_k)$ are the spectral data of the operator $\sqrt{-\Delta_g},$ where $\Delta_g$ denotes the Laplacian on $(X,g).$
The local Weyl formula for pseudo-differential operators allows us to prove the following characterisation (see Theorem \ref{even:condition:PDSO}).
\begin{itemize}
    \item[(i)'] Let us consider a pseudo-differential operator  $T\in  I^\mu_{1,0}(X) $ and let $r\in \mathbb{N}= (0,\infty)\cap \mathbb{Z}.$ Then $T:L^{2}(X)\rightarrow L^{2}(X)$ belongs to the Schatten class $S_r(L^2(X)),$  if and only if  $\mu <-n/r.$ 
\end{itemize}
For Fourier integral operators, we have obtained the following criteria, see Theorems  \ref{th:trace:class} and \ref{even:condition}.
\begin{itemize}
    \item[(i)]  Let us consider a Fourier integral operator  $T\in  I^\mu_{1,0}(X;\Lambda) .$ Then $T:L^{2}(X)\rightarrow L^{2}(X)$ belongs to the Schatten class $S_1(L^2(X)),$  namely, $T$ is of trace class on $L^2,$ if and only if  $\mu <-n.$
    \item[(ii)] More generally, let us consider   $T\in  I^\mu_{1,0}(X;\Lambda) $ and let $r\in \mathbb{N}= (0,\infty)\cap \mathbb{Z}.$ Then $T:L^{2}(X)\rightarrow L^{2}(X)$ belongs to the Schatten class $S_r(L^2(X)),$  if and only if  $\mu <-n/r.$ 
\end{itemize} Observe that (i) is an special case of (ii). 
According to our approach we have proved the following implications
\begin{equation}
    \boxed{\textnormal{Local Weyl formula for pseudo-differential operators}\, \eqref{LWF2:PSDO:Intro}}\Longrightarrow \textnormal{(i)'}\Longrightarrow \textnormal{(ii).}
\end{equation}However, we also give an alternative argument  for the implications
\begin{equation}
    \boxed{\textnormal{Local Weyl formula for Fourier integral  operators}\, \eqref{LWF:intro}}\Longrightarrow \textnormal{(i)}\Longrightarrow \textnormal{(ii)}.
\end{equation}
First,  for a Fourier integral operator $T\in S_r(L^2(X)),$ we have the following estimate about the growth of its eigenvalues $\lambda_j(T),$ see Theorem \ref{th:trace:class:dist},
\begin{equation}\label{j:r:intro}
    |\lambda_j(T)|=O(j^{-\frac{1}{r}}).
\end{equation}  On the other hand, let us discuss the sharpness of (i) and (ii).
Let $r>0$ and let  $T\in  I^\mu_{1,0}(X;\Lambda) $ be elliptic, then $T^*T$ is an elliptic pseudo-differential operator  of order $2\mu.$ In this case, it is very well known that $T:L^{2}(X)\rightarrow L^{2}(X)$ belongs to the Schatten class $S_r(L^2(X)),$  if and only if $T^*T$ belongs to the Schatten class $S_{\frac{r}{2}}(L^2(X)),$ which holds if and only if $\mu <-n/r,$ showing that the conditions in (i) and (ii) are sharp when $T$ is elliptic, and leaving open the following question:  
\begin{openproblem}\label{open:problem}
    Prove (or disprove) that if  $T\in I^\mu_{1,0}(X;\Lambda) ,$  is a non-elliptic Fourier integral operator that belongs to the Schatten class  $ S_{r}(L^2(X)),$ then $\mu<-n/r,$ when $r\in (0,1)\cup ((1,\infty)\setminus \mathbb{Z}).$
\end{openproblem} To the best of our knowledge, under the conditions in Open problem \ref{open:problem}, one has the validity of the order inequality $\mu\leq -n/r,$ see Lemma \ref{Invariance:trace:2}. 

On the other hand, the problem of the distribution of eigenvalues of operators on Banach spaces, particularly in the case of Fourier integral operators on the Lebesgue spaces $L^p(X)$ can be addressed using the notion of $r$-nuclearity due to Grothendieck \cite{Grothendieck1955}, see Subsection \ref{Nuclearity:section} for this terminology. Indeed, Grothendieck's notion of nuclearity allows the analysis of traces of operators on Banach spaces. In our analysis, we are going to exploit the ideal properties of the class of $r$-nuclear operators. Let $\Delta_g$ be the Laplace operator on $(X,g).$ According to our methodology, we first classify the membership of the powers of the operators $$E=\sqrt{1-\Delta_g}^{-s},\quad  s\in \mathbb{R},$$ in the  ideal of $r$-nuclear operators from $L^{p_1}(X)$ into $L^{p_2}(X).$ The kernel of the operator $E=\sqrt{1-\Delta_g}^{-s}$ has the form, see Subsection \ref{sect:bessel},
$$ K(x,y)=\sum_{\ell=0}^{\infty}\sum_{k=1}^{d_\ell} \lambda_\ell^{-s}e_{\ell}^{k}(x)\overline{e_{\ell}^{k}(y)},$$ with the systems $e_{\ell}^{k}$ denoting its eigenfunctions, and the $\lambda_\ell$'s are the corresponding eigenvalues of  $\sqrt{1-\Delta_g}.$ In consequence, the analysis of the $r$-nuclearity of $E$ involves the size of the $L^p$-norms of the eigenfunctions  $e_{\ell}^{k}.$ Imminently, a sharp analysis in this direction should apply the global $L^p$-estimates for the eigenfunctions of  $\sqrt{1-\Delta_g}$ due to Sogge  \cite{Sogge1988}, see Subsection \ref{Sogge:section}. These estimates have the form
\begin{equation}
    \Vert \phi\Vert_{L^p(X)}\lesssim \lambda^{\varphi(p)},\,1\leq p\leq \infty,
\end{equation}where $(\phi,\lambda)$ is a spectral datum of the operator $\sqrt{1-\Delta_g}$ with the eigenfuction $\phi$ being $L^2$-normalised. Combining these properties as well as the $L^p$-$L^q$-boundedness of Fourier integral operators we derive our criteria for the $r$-nuclearity of Fourier integral operators, see Subsection \ref{Main:nuclearity:section}. In this setting, we note that the structure of the order $\mu$ guarantying the $r$-nuclearity of an operator is the following: firstly, $\mu$ satisfies an inequality of the type
\begin{equation}\label{structure:mu}
     \mu < \mu(\varkappa)-\frac{n}{r}-\varphi(p_2)-\varphi(p_1'),\quad \varkappa\geq 0.
\end{equation}
The parameter $\mu(\varkappa)$ comes from the $L^{p_2}$-$L^{p_2+\varkappa}$ boundedness of Fourier integral operators. We revisit this topic in Subsection \ref{lp:lq}, where we establish these boundedness properties for the classes $I^\mu_{\rho,1-\rho}(X,Y;C)$ by combining the methods of \cite[Pages 39-40]{Ruzhansky:CWI-book} and the approach of  H\"ormander \cite{Hor67}, in terms of the real factorisation condition introduced by Seeger, Sogge and Stein in \cite{SSS}. In the context of complex canonical relations, we have presented this condition as developed by the third author in \cite{Ruzhansky:CWI-book}. On the other hand, the quantity $-\frac{n}{r}-\varphi(p_2)-\varphi(p_1')$ in \eqref{structure:mu} comes from the $r$-nuclearity of the operator $E=\sqrt{1-\Delta_g}^{-s}:L^{p_1}(X)\rightarrow L^{p_2}(X) ,$ where the inequality $s>\frac{n}{r}+\varphi(p_2)+\varphi(p_1')$ arises, see Proposition  \ref{Propo:1}. With the order $\mu(\varkappa),$ allowing the $L^{p_2}\rightarrow L^{p_2+\varkappa} $ boundedness of $TE^{-1} $ and with $E:L^{p_1}\rightarrow L^{p_2} $ being $r$-nuclear, the $r$-nuclearity of $T :L^{p_1}\rightarrow L^{p_2+\varkappa}$ follows from the ideal properties of the $r$-nuclear operators. We note that in the $L^2$-case, namely, the condition on $\mu$ in \eqref{structure:mu} allowing the $r$-nuclearity of $T:L^2\rightarrow L^2,$ is saturated to the inequality $\mu<-n/r.$ This inequality is compatible with the order condition on $\mu$ allowing the membership of the class $I^\mu_{\rho,1-\rho}(X,C)$ to the Schatten classes, since the set of $r$-nuclear operators on $L^2$ agrees with the Schatten class due to a result by Oloff \cite{Oloff1970}.

Finally, as an application of the nuclearity results above, when investigating the distribution of eigenvalues of a $r$-nuclear Fourier integral operator $T,$ we have obtained the rate of decay
\begin{equation}\label{dist:lp}
    |\lambda_j(T)|=O(j^{ -\frac{1}{r}+\left|\frac{1}{2}-\frac{1}{p}\right|}),
\end{equation} where $\lambda_j(T),j\in \mathbb{N}_0$ denotes the system of eigenvalues of $T: L^{p}(X)\rightarrow L^{p}(X),$ see Theorem \ref{dist:2}, while the Grothendieck-Lidskii formula has been obtained in Theorem \ref{dist:1}. Note that for $0<r\leq 1,$ we have a gain of information in \eqref{dist:lp} in terms of the Lebesgue index $p=p_2+\varkappa,$ when comparing it with the information provided in the setting of the Schatten-von Neumann classes, see \eqref{j:r:intro}.

\subsection{Bibliographic discussion}\label{Bibliographic:disc} The problem of providing necessary and sufficient conditions for
Schatten-von Neumann properties of Fourier integral operators has been considered mainly on $\mathbb{R}^n,$ see e.g. Bishop \cite{BishopACHA2011}, Concetti and Toft \cite{ConcettiToft2009} and Concetti, Garello, and Toft \cite{ConcettiGarelloToft2010}. 
However, in a special class of Fourier integral operators, namely, the class of pseudo-differential operators has been intensively investigated. Indeed, to find sharp sufficient conditions on a pseudo-differential operator to allow its membership to the Schatten classes on compact manifolds  has been an active area of research, e.g.  we refer the reader to the works of the second and the third author \cite{DelRuz2014JFA,DelRuz2014CRAS,DelRuzJMPA2017,DelRuzMRL2017,DelgadoRuzhanskyJAM2018} and the references therein.

The $r$-nuclearity of integral operators on Lebesgues spaces, and on compact manifolds have been consistently investigated mainly by the second and third authors. This program has started with the work \cite{DelRuz2014JMPA}, we refer the reader to further developments to the references \cite{Delgado2015Tohoku,DelRuz2016JST,DelRuz2016JLMS,DelgadoRuzhanskyJAM2018}.   Recently, these results have been revisited in terms of symbol criteria and also order criteria in  \cite{CardonaRuzToft} for pseudo-differential operators on compact Lie groups where there were presented several open problems and conjectures with the local Weyl formula on compact Lie groups in \cite{CarDelRuzJLT2024} as the main ingredient for the necessary conditions provided in this setting. For the conditions of $r$-nuclearity on Besov spaces on compact Lie groups we refer to \cite{Cardona2017JFAA} and for the nuclearity of multilinear operators on the torus, we refer the reader to \cite{CardonaKumarJFAA2019}. For applications of the theory of vector-valued pseudo-differential operators to index theory, we refer to \cite{CardonaKumarMANA2021}.

\subsection{Structure of the manuscript}
The organisation of this paper is as follows. 
In Section \ref{preliminaries} we present the preliminaries used in this article, namely, we discuss the $L^p$-boundedness of Fourier integral operators and then the real factorisation condition of Seeger, Sogge and Stein \cite{SSS}, and its extension to complex canonical relations \cite{Ruzhansky:CWI-book}. Also, we record the $L^p$-bounds for eigenfunctions of the Laplacian due to Sogge \cite{Sogge1988} and we present the notions of $r$-nuclearity of operators on Banach spaces as developed by Grothendieck \cite{Grothendieck1955}.  In Section \ref{Nuc:sec} we investigate the $r$-nuclearity of Fourier integral operators and the distribution of eigenvalues of these operators on $L^p$-spaces. 
Finally, in Section \ref{Schatten:sec} the problem of determining necessary and sufficient conditions for Fourier integral operators to belong to the Schatten-von Neumann ideals is addressed.







\section{Preliminaries}\label{preliminaries} In this section we introduce the basics of Fourier integral operators with complex-valued phase functions, the fundamental global estimates by Sogge \cite{Sogge1988} and the notion of $r$-nuclearity in the sense of Grothendieck \cite{Grothendieck1955} to be used in our further analysis.  As for the analysis of Fourier integral operators, we follow the notions and terminology introduced by H\"ormander \cite{Ho}, Duistermaat and H\"ormander \cite{Duistermaat-Hormander:FIOs-2} and Melin and Sj\"ostrand \cite{Melin:SjostrandI,Melin:SjostrandII}. Here, for all $\mu\in \mathbb{R},$ on a compact manifold $M$ without boundary,  $\Psi^\mu(M):=\Psi^\mu_{1,0}(M)$ denotes the Kohn-Nirenberg class of pseudo-differential operators of order $\mu,$ see H\"ormander \cite{Hormander1985III}.

\subsection{Lagrangian manifolds} Let us follow the notation in  \cite[Chapter I]{Ruzhansky:CWI-book}. For a compact manifold $M,$ we denote by $TM$ and $T^*M$ the tangent bundle, and cotangent bundle, respectively.
A $2$-form $\omega$ is called {\it symplectic}  on $M$ if $$d\omega =0,$$ and for all $x\in M,$ the bilinear form $\omega_x$  is antisymmetric and non-degenerate on $T_{x}M$. The canonical symplectic form $\sigma_M$ on $M$ can be defined as follows. Let $\pi:=\pi_M:T^*M\rightarrow M,$  be the canonical projection. For any $(x,\xi)\in T^*M,$ let us consider the linear mappings
\begin{equation*}
    d\pi_{(x,\xi)}:T_{(x,\xi)}(T^*M)\rightarrow T_{x}M\textnormal{   and,   }\xi:T_{x}M\rightarrow \mathbb{R}.
\end{equation*}The composition $\alpha_{(x,\xi)}:=\xi\circ d\pi_{(x,\xi)}\in T^{*}_{(x,\xi)}(T^*M),$ that is
$
   \xi\circ d\pi_{(x,\xi)}:T_{(x,\xi)}(T^*M)\rightarrow  \mathbb{R},
$ defines  a 1-form $\alpha $ on $T^*M.$ With the notation above, the canonical symplectic form $\sigma_M$ on $M$ can be defined via 
\begin{equation}\label{sigmaM}
    \sigma_M:=d\alpha.
\end{equation}It follows that $d\sigma_M=0$ and then  that $\sigma_M$ is symplectic. If $M=X\times Y,$ we have that  $\sigma_{X\times Y}=\sigma_X\oplus -\sigma_Y .$ Now we record the family of submanifolds that are necessary when one defines the canonical relations.
\begin{definition} Let $M$ and $X$ be of dimension $n.$ We define a  Lagrangian submanifold of $M$, a conic manifold of $T^*X,$ and the conormal bundle of a submanifold of $X$, as follow:
\begin{itemize}
    \item  A submanifold $\Lambda\subset T^*M$ of dimension $n$ is called {\it Lagrangian} if 
\begin{equation*}
    T_{(x,\xi)}\Lambda=( T_{(x,\xi)}\Lambda)^{\sigma}:=\{v\in T_{(x,\xi)}(T^*M):\sigma_M(v,v')=0,\,\forall v'\in T_{(x,\xi)}\Lambda \}.
\end{equation*}
\item We say that  $\Lambda\subset T^*M\setminus 0$ is {\it conic} if $(x,\xi)\in \Lambda,$ implies that $(x,t\xi)\in \Lambda,$ for all $t>0.$ 
\item Let $\Sigma\subset X$ be a smooth submanifold of $X$ of dimension $k.$ Its conormal bundle in $T^*X$ is defined by
\begin{equation}
    N^{*}\Sigma:=\{(x,\xi)\in T^*X: \,x\in \Sigma,\,\,\xi(\delta)=0,\,\forall \delta\in T_{x}\Sigma  \}.
\end{equation}
\end{itemize}    
\end{definition}
The following facts characterise the conic Lagrangian submanifolds of $T^*M.$
\begin{itemize}
    \item Let $\Lambda\subset T^*M\setminus 0,$ be a closed sub-manifold of dimension $n.$ Then $\Lambda$ is a conic Lagrangian manifold if and only if the 1-form $\alpha$ in \eqref{sigmaM} vanishes on $\Lambda.$
    \item Let $\Sigma\subset X,$ be a submanifold of dimension $k.$ Then its conormal bundle $N^*\Sigma$ is a conic Lagrangian manifold. 
\end{itemize}
\begin{remark}
   The Lagrangian manifolds have the following property.
 Let $\Lambda\subset T^{*}M\setminus 0,$ be a conic Lagrangian manifold and let 
    \begin{equation}
        d\pi_{(x,\xi)}:T_{(x,\xi)}\Lambda\rightarrow T_{x}M,
    \end{equation}have constant rank equal to $k,$ for all $(x,\xi)\in \Lambda.$ Then, each $(x,\xi)\in \Lambda$ has a conic neighborhood $\Gamma$ such that 
    \begin{itemize}
        \item[1.] $\Sigma=\pi(\Gamma\cap \Lambda)$ is a smooth manifold of dimension $k.$
        \item[2.] $\Gamma\cap \Lambda$ is an open subset of $N^*\Sigma.$
    \end{itemize}
\end{remark}

The Lagrangian manifolds have a local representation defined in terms of {\it phase functions} that can be defined as follows. For this, let us consider a local trivialisation $M\times (\mathbb{R}^n\setminus 0), $ where we can assume that $M$ is an open subset of $\mathbb{R}^n.$

\begin{definition}[Real-valued phase functions]\label{Real:defin} Let $\Gamma$ be a cone in $M\times (\mathbb{R}^N\setminus 0). $ A smooth function $\phi:M\times (\mathbb{R}^N\setminus 0)\rightarrow \mathbb{R}, $ $(x,\theta)\mapsto \phi(x,\theta),$ is a real {\it phase function} if it is homogeneous of degree one in $\theta$ and has no critical points as a function of $(x,\theta),$ that is 
\begin{equation}
 \forall t>0,\,   \phi(x,t\theta)=t\phi(x,\theta), \textnormal{   and   }d_{(x,\theta)}\phi(x,\theta)\neq 0,\forall (x,\theta)\in M\times (\mathbb{R}^N\setminus 0).
\end{equation}Additionally, we say that $\phi$ is a  {\it non-degenerate phase function} in $\Gamma,$ if for any $(x,\theta)\in \Gamma$ such that $d_{\theta}\phi(x,\theta)=0,$ one has that 
\begin{equation}
    d_{(x,\theta)}\frac{\partial \phi}{\partial\theta_{j}}(x,\theta),\,\,1\leq j\leq N,
\end{equation}is a system of linearly independent vectors on $\mathbb{R}$.
\end{definition}
The following facts describe locally a Lagrangian manifold in terms of a phase function.
\begin{itemize}
    \item Let $\Gamma$ be a cone in $M\times (\mathbb{R}^N\setminus 0), $ and let $\phi$ be a non-degenerate phase function in $\Gamma.$ Then, there exists an open cone $\tilde{\Gamma}$ containing $\Gamma$ such that the set
    \begin{equation}
        U_{\phi}=\{(x,\theta)\in \tilde{\Gamma}:d_\theta \phi(x,\theta)=0\},
    \end{equation}is a smooth conic  sub-manifold of $M\times (\mathbb{R}^N\setminus 0)$ of dimension $n.$ The mapping 
    \begin{equation}
        L_{\phi}: U_\phi\rightarrow T^*M\setminus 0,\,\, L_{\phi}(x,\theta)=(x,d_x\phi(x,\theta)),
    \end{equation}is an immersion. Let us denote $\Lambda_\phi=L_{\phi}(U_\phi).$
    \item Let $\Lambda\subset T^*M\setminus 0$ be a sub-manifold of dimension $n.$ Then,  $\Lambda$ is a conical Lagrangian manifold if and only if  every $(x,\xi)\in \Lambda$ has a conic neighborhood $\Gamma$ such that $\Gamma\cap \Lambda=\Lambda_\phi, $ for some non-degenerate phase function $\phi.$ 
\end{itemize}
\begin{remark}
The cone condition on $\Lambda$ corresponds to the homogeneity of the phase function.
\end{remark}
\begin{remark} Although we have presented the previous Definition \ref{Real:defin} of non-degenerate real phase function in the case of a real function of  $(x,\theta),$ the same can be defined if one considers functions of $(x,y,\theta).$ Indeed, a real-valued phase function $\phi(x,y,\theta)$ homogeneous of order 1 at $\theta\neq 0$  that satisfies the following two conditions
 \begin{equation}
        \textnormal{det}\partial_x \partial_\theta(\phi(x,y,\theta))\neq 0,\,\, \textnormal{det}\partial_y \partial_\theta(\phi(x,y,\theta))\neq 0,\,\,\theta\neq 0,
    \end{equation} is called  non-degenerate. 
\end{remark}
\begin{remark}
 For a symplectic manifold $M$ of dimension $2n,$ we will denote by $\widetilde{M}$ its almost analytic continuation in $\mathbb{C}^{2n}$ (see   \cite[Page 10]{Ruzhansky:CWI-book} for details about the construction of $\widetilde{M}.$ For completeness, we present such a notion in the following subsection.
\end{remark}
\subsection{Almost analytic continuation of real manifolds}\label{2:2} To define the almost analytic continuation of a manifold we require some preliminaries. We record that a function $f:U\to \mathbb{C}$ defined on an open subset $U\subset \mathbb{C}^n$ is called {\it almost analytic} in $U_{\mathbb{R}}:=U\cap \mathbb{R}^n,$ if $f$ satisfies the Cauchy-Riemann equations on $U_{\mathbb{R}},$ that is, if $\overline{\partial}f$ and all its derivatives vanish in $U_\mathbb{R}.$ Here, as usual, $$\overline{\partial}=1/2(\partial_x+i\partial_y).$$

One defines an {\it almost analytic extension} of a manifold $M$ requiring that the corresponding coordinate functions are almost analytic in $M.$ Here, we are going to present this notion in detail. 

Let $\Omega\subset \mathbb{R}^n$ and let $\rho:\Omega\rightarrow \mathbb{R}$ be a non-negative and Lipschitz function. A function $f:\Omega\rightarrow \mathbb{R}$ is called $\rho$-flat in $\Omega$ if for every compact set $K\subset \Omega,$ and for every integer $N\geq 0,$ one has the estimate $$|f(x)|\lesssim_{K,N} \rho(x)^{N},$$ for all $x\in K.$ This notion defines an equivalence relation on the space of mapping on $\Omega:$
$$f\textnormal{ and }g \textnormal{ are }\rho\textnormal{-equivalent if }f-g\textnormal{ is }\rho\textnormal{-flat on }\Omega. $$ If $K_0\subset \Omega$ is a compact set, we will say that $f$ is {\it flat } on $K_0,$ if it is $\rho$-flat with $\rho(x):=\textnormal{dist}(x,K_0).$ Then, one has the following property:
\begin{itemize}
    \item let $f\in C^{\infty}(\Omega)$ be $\rho$-flat. Then all its derivatives are $\rho$-flat and $f$ is flat on $K_0$ if and only if $D^{\alpha}f=0$ for all $x\in K_0$ and all $\alpha.$ 
\end{itemize}
Let $G$ be an open subset of $\mathbb{C}^n$ and let $K$ be a closed subset of $G.$ A  function $f\in C^\infty(G)$ is {\it almost analytic} on $K$ if the functions $\partial_j f$ are flat on $K,$ for all $1\leq j\leq n.$ For an open set  $\Omega\subset \mathbb{R}^n,$ we will denote
\begin{equation}
   \tilde{\Omega}:=\Omega+i\mathbb{R}^n.
\end{equation}One can make the identification $$\Omega\cong \tilde{\Omega}\cap\{z:\textnormal{Im}(z)=0 \}.$$ Note that
every function $f\in C^{\infty}(G_{\mathbb{R}})$ defines an equivalence class of almost analytic functions on $G_\mathbb{R},$ which consist of functions in $C^\infty(G)$ which are almost analytic in $G_\mathbb{R},$ modulo functions being flat on $G_{\mathbb{R}}.$ Any representative of this class is called an {\it almost analytic continuation of }$f$ in $G.$

Let $O$ be an open subset of $\mathbb{C}^n.$ Let $M$ be a smooth submanifold of codimension $2k$ of $O,$ and let $K$ be a closed subset of $O.$ Then, $M$ is called {\it almost analytic }on $K,$ if for any point $z_0\in K,$ there exists an open set $U\subset O,$ and $k$-functions $f_{j},$ such that every $f_j$ is almost analytic on $K\cap U,$
and such that in $U,$ $M$ is defined by the zero-level sets $f_j(z)=0,$ with the differentials $df_{j},$ $1\leq j\leq k,$ being linearly independent over $\mathbb{C}.$

Two almost analytic submanifolds $M_1$ and $M_2$ of $O$ are {\it equivalent} if they have the same dimension, if $$M_{1,\mathbb{R}}=M_1\cap \mathbb{R}^n=M_2\cap \mathbb{R}^n=M_{2,\mathbb{R}}=:M_{\mathbb{R}},$$ and locally $f_j-g_j$ are flat functions on $M_{\mathbb{R}}$ where $f_j$ and $g_j$ are the functions that define $M_1$ and $M_2,$ respectively. 

A real manifold $\Omega$ defines an equivalence class of almost analytic manifolds in $\tilde{\Omega}.$ A representative of this class is called an {\it almost analytic continuation } of $\Omega$ in $\mathbb{C}^n.$
Now, let us consider:
\begin{itemize}
    \item a real symplectic manifold $M$ of dimension $d=2n,$ and let $\tilde{M}$ be (modulo its equivalence class) its almost analytic continuation in $\mathbb{C}^{2n}.$
    \item Let $\Lambda\subset \tilde{M}$ be an almost analytic sub-manifold containing the real point $\rho_0\in M,$ and let $(x,\xi)$ be its real symplectic coordinates in a neighbourhood $W$ of $\rho_0.$ 
    \item Let $(\tilde{x},\tilde{\xi})$ be almost analytic continuations of the coordinates $(x,\xi)$ in $W,$ in such a way that $(\tilde{x},\tilde{\xi})$ maps $ \tilde{W}$  diffeomorphically on an open subset of $\mathbb{C}^{2n}.$
    \item Let $g$ be an almost analytic function such that $\textnormal{Im}(g)\geq 0,$ in $\mathbb{R}^n,$ and such that $\Lambda$ is defined in a neighbourhood of $\rho_0$ by the equations $\tilde{\xi}=\partial_{\tilde{x}}g(x),$ $\tilde{x}\in \mathbb{C}^n.$
\end{itemize}
Then, an almost analytic manifold $\Lambda$ satisfying these properties, in some real symplectic coordinate system at every point is called a {\it positive Lagrangian manifold}. We conclude this discussion with the following result,  see e.g. Theorem 1.2.1 in \cite{Ruzhansky:CWI-book}.
\begin{proposition}
    Let $M,$ $\Lambda$ and $W$ be as before. If $(\tilde y, \tilde \eta)$ is another almost analytic continuation of coordinates in $\tilde{W}$ and $\Lambda$ is defined by the equation $\tilde\eta=H(\tilde y),$ in a neighbourhood of $\rho_0,$ then $\Lambda$ is locally equivalent to the manifold $\tilde{\eta}=\partial_{\tilde y}h,$ $\tilde{y}\in \mathbb{C}^n,$ where $h$ is an almost analytic function and $\textnormal{Im}(h)\geq 0.$
\end{proposition}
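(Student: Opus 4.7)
The plan is to translate the positive Lagrangian property into an assertion about a closed 1-form on $\Lambda$, and then integrate it to produce the generating function $h$ in the new coordinates, finally tracking imaginary parts to get the sign condition.

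First, I would use the fact that both $(\tilde x, \tilde \xi)$ and $(\tilde y, \tilde \eta)$ are almost analytic continuations of real symplectic coordinates on $W$. Hence the canonical 2-forms coincide on $\tilde W$ up to flat functions: $d\tilde\xi \wedge d\tilde x = d\tilde\eta \wedge d\tilde y$. Consequently the 1-form $\tilde\xi\, d\tilde x - \tilde\eta\, d\tilde y$ is closed (modulo flat terms), and the Poincar\'e lemma (applied in the almost analytic category, i.e.\ locally and modulo flat functions on $M_{\mathbb{R}}$) produces an almost analytic function $S$ on $\tilde W$ with
\begin{equation*}
dS = \tilde\xi\, d\tilde x - \tilde\eta\, d\tilde y.
\end{equation*}
Since the underlying real coordinate change $(x,\xi) \mapsto (y,\eta)$ is a real symplectomorphism, we may choose $S$ to be real on $W \cap \mathbb{R}^{2n}$, and thus $\textnormal{Im}(S)$ is flat on the real part.

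Next I would use the hypothesis that, in the first system, $\Lambda$ is given by $\tilde\xi = \partial_{\tilde x} g$ with $\textnormal{Im}(g) \geq 0$. Pulling the tautological 1-form back to $\Lambda$ (parametrised by $\tilde x$) yields
\begin{equation*}
\tilde\xi\, d\tilde x \Big|_{\Lambda} = \partial_{\tilde x} g(\tilde x)\, d\tilde x = dg.
\end{equation*}
Combining with $dS|_\Lambda = \tilde\xi\, d\tilde x|_\Lambda - \tilde\eta\, d\tilde y|_\Lambda$, we get
\begin{equation*}
\tilde\eta\, d\tilde y \Big|_{\Lambda} = d\bigl( g - S|_\Lambda \bigr).
\end{equation*}
Now, since by assumption $\Lambda$ in the neighbourhood of $\rho_0$ is the graph $\tilde\eta = H(\tilde y)$, $\tilde y$ is a good parameter on $\Lambda$. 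Define the almost analytic function
\begin{equation*}
h(\tilde y) := \bigl( g - S \bigr)\Big|_{\Lambda}, \quad \text{expressed in terms of } \tilde y.
\end{equation*}
The previous identity becomes $H(\tilde y)\, d\tilde y = dh(\tilde y)$ on $\Lambda$, hence $H = \partial_{\tilde y} h$ modulo functions flat on $\mathbb{R}^n$; equivalently, $\Lambda$ is locally equivalent to $\{\tilde\eta = \partial_{\tilde y} h\}$ in the sense of Subsection \ref{2:2}.

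It remains to check the sign of the imaginary part. On the real points of $\Lambda$ one has $\textnormal{Im}(h) = \textnormal{Im}(g) - \textnormal{Im}(S|_\Lambda)$. By construction $\textnormal{Im}(g) \geq 0$ and $\textnormal{Im}(S)$ is flat on $W \cap \mathbb{R}^{2n}$, so $\textnormal{Im}(h) \geq 0$ modulo a flat function on the real part, which is the required positivity condition (again in the almost analytic sense, cf.\ Subsection \ref{2:2}).

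The main technical obstacle I expect is the careful bookkeeping of the almost analytic category: every identity above holds in the genuine complex setting only up to functions flat on the real axis, so I would need to invoke the fact (standard in the Melin--Sj\"ostrand framework \cite{Melin:SjostrandI,Melin:SjostrandII}) that flat corrections compose, differentiate, restrict to submanifolds, and integrate against the $\overline\partial$-operator while remaining flat, thereby preserving the equivalence class used to define the positive Lagrangian manifold $\Lambda$.
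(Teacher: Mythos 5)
The paper does not reprove this proposition; it cites Theorem~1.2.1 of \cite{Ruzhansky:CWI-book} (which in turn follows Melin--Sj\"ostrand and H\"ormander). So your proof must be evaluated on its own terms, and there is a genuine gap in the positivity step.

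Your construction of $h$ via the generating function of the symplectomorphism $S$ is fine, and the identity $h=(g-S)|_{\Lambda}$ is exactly the right relation. The problem is the last paragraph. You evaluate $\textnormal{Im}(h)=\textnormal{Im}(g)-\textnormal{Im}(S|_{\Lambda})$ ``on the real points of $\Lambda$'' and use that $\textnormal{Im}(g)\ge 0$ and $\textnormal{Im}(S)$ is flat on $\mathbb{R}^{2n}$. But the required inequality $\textnormal{Im}(h)(\tilde y)\ge 0$ must hold for \emph{all real} $\tilde y$ near $\tilde y_0$, not just on the set $\Lambda_{\mathbb{R}}=\Lambda\cap M$, which is typically a single point (or lower-dimensional). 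For a real $\tilde y$ away from $\rho_0$, the corresponding point of $\Lambda$ has $\tilde\eta$ genuinely complex and hence $\tilde x$ genuinely complex; the hypothesis $\textnormal{Im}(g)\ge 0$ is only asserted at \emph{real} $\tilde x$, and the flatness of $\textnormal{Im}(S)$ on $\mathbb{R}^{2n}$ gives no control off the real locus. Concretely, take $n=1$, $g(\tilde x)=i\tilde x^2/2$ and the real symplectomorphism $(\tilde y,\tilde\eta)=(\tilde\xi,-\tilde x)$: then $S=\tilde x\tilde\xi$, the point of $\Lambda$ over real $\tilde y$ has $\tilde x=-i\tilde y$, and there $\textnormal{Im}(g)=-\tilde y^2/2<0$ and $\textnormal{Im}(S|_{\Lambda})=-\tilde y^2$. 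The conclusion $\textnormal{Im}(h)=\tilde y^2/2\ge 0$ is correct, but it results from a cancellation between two individually wrong-signed terms; your argument as written would predict nothing.

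The missing ingredient is the linear-algebraic invariance of the positivity condition. At the real point $\rho_0$ one has $\textnormal{Im}(g)(\tilde x_0)=0$, $d\textnormal{Im}(g)(\tilde x_0)=0$ (and similarly for $h$), so $\textnormal{Im}(g)\ge 0$ near $\tilde x_0$ amounts to the non-negativity of a Hermitian form on $T_{\rho_0}\Lambda$, namely $-i\,\sigma(v,\bar v)\ge 0$ for $v\in T_{\rho_0}\Lambda$, where $\sigma$ is the complexified symplectic form and $\bar{\phantom{v}}$ is conjugation with respect to the real structure. This formulation is manifestly preserved by real symplectic changes of coordinates (which commute with conjugation and with $\sigma$), which is why $\textnormal{Im}(h)\ge 0$ holds in the new frame; one then upgrades the infinitesimal statement to a local one using the almost analytic structure. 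Without this lemma (or an equivalent second-order Taylor argument as in H\"ormander's treatment), the step from $\textnormal{Im}(g)\ge 0$ to $\textnormal{Im}(h)\ge 0$ does not go through.
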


\subsection{Fourier integral operators with real-valued phases}

We can assume that  $X, Y$ are open sets in $\mathbb{R}^n$. One defines the class of
  Fourier integral operators $T\in I^\mu_{\rho}(X, Y;\Lambda)$ by
  the (microlocal) formula
   \begin{equation}\label{EQ:FIO}
     Tf(x)=\smallint\limits_Y\smallint\limits_{\mathbb{R}^N}
   e^{i\Psi(x,y,\theta)} a(x,y,\theta) f(y)d\theta\;dy,
   \end{equation} 
   where the symbol $a$ is a smooth function locally  in the class $S^\mu_{\rho,1-\rho}(X\times Y\times  (\mathbb{R}^n\setminus 0) ),$ with $1/2\leq \rho\leq 1.$ This means that  $a$ satisfies the symbol inequalities
   $$ |\partial_{x,y}^{\beta}\partial_\theta^\alpha
   a(x,y,\theta)|
    \leq C_{\alpha\beta}(1+|\theta|)^{\mu-\rho|\alpha|+(1-\rho)|\beta|},$$
   for $(x,y)$ in any compact subset $K$ of $X\times Y,$ and $\theta \in \mathbb{R}^N\setminus 0,$ while the real-valued phase function $\Psi$ satisfies 
     the following properties:
\begin{itemize}
    \item[1.] $\Psi(x,y,\lambda\theta)=
      \lambda\Psi(x,y,\theta),$ for all $\lambda>0$;
    \item[2.] $d\Psi\not=0$;
    \item[3.] $\{d_\theta\Psi=0\}$ is smooth 
      (e.g. $d_\theta\Psi=0$ implies
      $d_{(x,y,\theta)}\frac{\partial\Psi}{\partial\theta_j}$ are linearly
      independent).
\end{itemize}
Here $\Lambda \subset T^*(X\times Y)\setminus 0$ is a Lagrangian manifold locally parametrised by the phase function $\Psi,$
 $$ \Lambda=\Lambda_\Psi=\{(x,d_x\Psi,y,d_y\Psi): d_\theta\Psi=0\}.$$
\begin{remark}
  The   canonical relation associated with $T$
  is the conic Lagrangian manifold in 
  $T^*(X\times Y)\backslash 0$,  defined by
  $\Lambda'=\{(x,\xi,y,-\eta): (x,\xi,y,\eta)\in \Lambda\}.$
In view of the H\"ormander equivalence-of-phase-functions theorem (see e.g. Theorem 1.1.3 in \cite[Page 9]{Ruzhansky:CWI-book}), the notion of Fourier integral operator becomes independent of the choice of a particular phase function associated to a Lagrangian manifold $\Lambda.$  Because of the diffeomorphism $\Lambda\cong \Lambda',$ we do not distinguish between $\Lambda $ and $\Lambda'$ by saying also that $\Lambda$ is the canonical relation associated with $T.$  
\end{remark}

\subsection{Fourier integral operators with complex-valued phases}\label{2:4}
Now we record the following result for canonical relations, see e.g. Theorem 1.2.1 in \cite{Ruzhansky:CWI-book}.
\begin{proposition}
    Let $\Psi$ be a phase function of the positive type that is defined in a conical neighbourhood. Let $\tilde{\Psi}$ be an almost analytic homogeneous continuation of $\Psi$ in a canonical neighbourhood in $\mathbb{C}^n\times \mathbb{C}^n\times ({\mathbb{C}^{n}}\setminus \{0\}).$ Let \begin{equation}
        C_{\tilde \Psi}=\{(\tilde x, \tilde y, \tilde \theta)\in \mathbb{C}^n\times \mathbb{C}^n\times ({\mathbb{C}^{n}}\setminus \{0\}):\partial_{\theta}\tilde\Psi(\tilde x, \tilde y, \tilde \theta)=0 \}.
    \end{equation}Then, the image of the set $C_{\tilde \Psi}$ under the mapping 
    \begin{equation}
      C_{\tilde \Psi}  \ni(\tilde x, \tilde y, \tilde \theta)\mapsto (\tilde x, \partial_{\tilde x}\tilde\Psi,\partial_{\tilde y}\tilde\Psi )|_{(\tilde x, \tilde y, \tilde \theta)}\in \mathbb{C}^n\times ({\mathbb{C}^{n}}\setminus \{0\})\times ({\mathbb{C}^{n}}\setminus \{0\})
    \end{equation} is a local positive Lagrangian manifold.
\end{proposition}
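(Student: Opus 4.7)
The plan is to mimic, in the almost analytic category, the classical construction by which a real non-degenerate phase function parametrizes a Lagrangian submanifold. First I would verify that $C_{\tilde\Psi}$ is an almost analytic submanifold of complex dimension $2n$: the non-degeneracy of the underlying real phase $\Psi$ means that the differentials $d\,\partial_{\theta_j}\Psi$ are linearly independent on the real critical set, and this transversality lifts to $\tilde\Psi$ up to functions that are flat on the real locus. Hence the $n$ equations $\partial_{\tilde\theta_j}\tilde\Psi=0$ cut out, up to the equivalence of Subsection~\ref{2:2}, an almost analytic submanifold of the expected codimension $n$ in $\mathbb{C}^n\times\mathbb{C}^n\times(\mathbb{C}^n\setminus\{0\})$. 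A direct differential computation then shows that the parametrizing map is an almost analytic immersion, so its image is an almost analytic submanifold of complex dimension $2n$ in the appropriate cotangent bundle.

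Second, I would establish the isotropy (hence, by dimension, the Lagrangian property) by pulling back the canonical Liouville form $\alpha=\tilde\xi\, d\tilde x+\tilde\eta\, d\tilde y$ with $\tilde\xi:=\partial_{\tilde x}\tilde\Psi$ and $\tilde\eta:=\partial_{\tilde y}\tilde\Psi$. A routine chain rule yields, on $C_{\tilde\Psi}$,
\begin{equation*}
\tilde\xi\, d\tilde x+\tilde\eta\, d\tilde y \;=\; d\tilde\Psi - \partial_{\tilde\theta}\tilde\Psi\, d\tilde\theta \;\equiv\; d\tilde\Psi
\end{equation*}
modulo functions flat on the real locus, so the pullback of $\alpha$ is exact and its exterior derivative, which is the canonical symplectic form, vanishes on the image of $C_{\tilde\Psi}$ in the same sense. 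Together with the dimension count this gives the Lagrangian property.

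Third, for positivity I would apply an almost analytic implicit function theorem at a real critical point $\rho_0$ of $\tilde\Psi$ in $\tilde\theta$, which is available by the non-degeneracy of the phase, to obtain a local almost analytic solution $\tilde\theta=\tilde\theta(\tilde x,\tilde y)$ of $\partial_{\tilde\theta}\tilde\Psi=0$. Setting
\begin{equation*}
g(\tilde x,\tilde y) := \tilde\Psi(\tilde x,\tilde y,\tilde\theta(\tilde x,\tilde y)),
\end{equation*}
the chain rule combined with $\partial_{\tilde\theta}\tilde\Psi(\tilde x,\tilde y,\tilde\theta(\tilde x,\tilde y))\equiv 0$ gives $\partial_{\tilde x}g=\tilde\xi$ and $\partial_{\tilde y}g=\tilde\eta$ on the image. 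Thus the image is locally described by the graph $(\tilde\xi,\tilde\eta)=(\partial_{\tilde x}g,\partial_{\tilde y}g)$, which is the required symplectic normal form for a positive Lagrangian. Positivity $\mathrm{Im}(g)\geq 0$ on the real locus is inherited from $\mathrm{Im}(\Psi)\geq 0$, since at real $(\tilde x,\tilde y)$ the stationary value $g$ agrees with a real critical value of $\Psi$ up to flat terms.

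The main obstacle will be bookkeeping everything at the level of equivalence classes modulo functions flat on the real locus, as explained in Subsection~\ref{2:2}: identities such as $\partial_{\tilde\theta}\tilde\Psi=0$ on $C_{\tilde\Psi}$ and the chain rule identities used to produce $g$ hold off the real axis only up to flat error terms, and one must check that the resulting almost analytic submanifold, the generating function $g$, and the positivity of $\mathrm{Im}(g)$ are independent of the chosen almost analytic representative of $\tilde\Psi$. Once this is done, the local symplectic coordinates $(\tilde x,\tilde y,\tilde\xi,\tilde\eta)$ together with $g$ exhibit the image as a positive Lagrangian manifold in the sense of Subsection~\ref{2:2}.
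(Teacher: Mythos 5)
The paper does not prove this proposition; it records it from Theorem~1.2.1 of \cite{Ruzhansky:CWI-book}, so there is no internal proof to compare against. Evaluating your proposal on its own merits, Steps~1 and~2 (the dimension count and the isotropy argument via pulling back the Liouville form, both understood modulo flat terms) are standard and essentially sound, but Step~3 contains a genuine gap.

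You invoke ``an almost analytic implicit function theorem \ldots which is available by the non-degeneracy of the phase, to obtain a local almost analytic solution $\tilde\theta=\tilde\theta(\tilde x,\tilde y)$ of $\partial_{\tilde\theta}\tilde\Psi=0$.'' This is not justified. Non-degeneracy of a phase function (Definition~\ref{Real:defin}) asserts linear independence of the full differentials $d_{(x,y,\theta)}\partial_{\theta_j}\Psi$ on the critical set; it does not assert invertibility of the partial Hessian $\partial^2_{\tilde\theta\tilde\theta}\tilde\Psi$, which is what the implicit function theorem in the $\tilde\theta$-variables requires. In fact that Hessian is \emph{always} degenerate here: since $\tilde\Psi$ is homogeneous of degree~$1$ in $\tilde\theta$, Euler's identity $\langle\tilde\theta,\partial_{\tilde\theta}\tilde\Psi\rangle=\tilde\Psi$ differentiated once more in $\tilde\theta$ gives $\partial^2_{\tilde\theta\tilde\theta}\tilde\Psi\cdot\tilde\theta=0$, so $\tilde\theta$ lies in its kernel. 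A concrete failure: for the pseudo-differential phase $\Psi(x,y,\theta)=\langle x-y,\theta\rangle$ one has $\partial^2_{\theta\theta}\Psi\equiv 0$, the critical set is $\{x=y,\theta\neq 0\}$, and the image is the conormal to the diagonal $\{(x,\theta,x,-\theta)\}$, which is manifestly not a graph over $(x,y)$, so no generating function $g(\tilde x,\tilde y)$ of the kind you construct can exist.

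The missing idea is that the definition of positive Lagrangian manifold in Subsection~\ref{2:2} only requires the graph representation $\tilde\xi=\partial_{\tilde x}g$ in \emph{some} real symplectic coordinate system at each point. One must therefore first perform a linear symplectic change of coordinates near the base point $\rho_0$, chosen (using the full non-degeneracy of the differentials $d\,\partial_{\theta_j}\Psi$, not the $\theta\theta$-block alone) so that the image of $C_{\tilde\Psi}$ projects locally diffeomorphically onto the new base; only then does the almost analytic implicit function theorem apply and produce the generating function with $\operatorname{Im}(g)\geq 0$ inherited from $\operatorname{Im}(\Psi)\geq 0$. Your argument skips this coordinate change, and without it the construction of $g$ does not go through.
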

Let $X$ and $Y$ be smooth manifolds of dimension $n$ and let us denote by $\widetilde{ T^*(X\times Y)\setminus 0}$  the  almost analytic continuation  of  $ T^*(X\times Y)\setminus 0.$ Let   $$  C\equiv C_{\Phi}=\{(x,d_x\Phi,y, d_y\Phi):d_\theta \Phi=0\}\subset\widetilde{ T^*(X\times Y)\setminus 0}$$  be a {\it smooth positive homogeneous canonical relation.} This means that $C$ is locally parametrised by a complex phase function $\Phi(x,y,\theta),$ satisfying the following properties:
\begin{itemize}
    \item[1.] $\Phi$ is homogeneous of order one in $\theta:$  $\Phi(x,y,t\theta)=t\Phi(x,y,\theta),$ for all $t>0,$
    \item[2.] $\Phi$ has no critical points on its domain, i.e. $d\Phi\neq 0;$
    \item[3.] $\{d_\theta \Phi(x,y,\theta)=0\}$ is smooth (e.g. $d_\theta \Phi=0$ implies that $d\frac{\partial \Phi}{\partial\theta_j}$ are independent over $\mathbb{C}$),
    \item[4.] $\textnormal{Im}(\Phi)\geq 0.$
\end{itemize}When the last condition is satisfied one says that the complex phase function $\Phi$ is of {\it positive type}. Then, the class  $I^\mu_\rho(X,Y;C)$ of Fourier integral operators $T$ is determined modulo $C^{\infty}$ by those integral operators $T$ that locally have an integral  kernel of the form
\begin{align*}
    A(x,y)=\smallint\limits_{\mathbb{R}^N}e^{i\Phi(x,y,\theta)}a(x,y,\theta)d\theta,
\end{align*}where $a(x,y,\theta)$ is a symbol of order $\mu+\frac{N-n}{2}.$ 
By the equivalence-of-phase-function theorem, we can always assume that $N=n,$ and that the symbol $a$ satisfies the type $(\rho,1-\rho)$-estimates
\begin{equation}
    |\partial_{x,y}^{\beta}\partial_\theta^\alpha a(x,y,\theta)|\leq C_{\alpha,\beta}(1+|\theta|)^{\mu-\rho|\alpha|+(1-\rho)|\beta|},\,\,\frac{1}{2}\leq \rho\leq 1,
\end{equation}locally uniformly in $(x,y).$

\subsection{$L^p$-boundedness of Fourier integral operators}
First, let us review the mapping properties of Fourier integral operators in the case of real phase functions.  By following \cite{Duistermaat-Hormander:FIOs-2,Ho}, these classes are denoted by $I^\mu_{\rho,1-\rho}(X,Y;\Lambda),$ with $\Lambda$ being  considered locally a graph of a symplectomorphism from $T^{*}X\setminus 0$ to  $T^{*}Y\setminus 0,$ which are equipped with the canonical symplectic forms $d\sigma_X$ and $d\sigma_Y,$ respectively. Such Fourier integral operators are called non-degenerate.  We recall that the symplectic structure of $\Lambda$ is determined by the symplectic $2$-form $\omega$ on $X\times Y,$ $\omega=\sigma_X\oplus -\sigma_Y .$ Let $\pi_{X\times Y}$ be the canonical projection from $T^*X\times T^*Y$ into $X\times Y.$ As in the case of pseudo-differential operators, non-degenerate Fourier integral operators of order zero are bounded on $L^2.$ The fundamental work of Seeger, Sogge and Stein \cite{SSS} establishes the boundedness of 
 $T\in I^\mu_1(X,Y;\Lambda),$ from $L^p_{\textnormal{comp}}(Y)$ into $L^p_{\textnormal{loc}}(X)$ with the order
 \begin{equation}\label{critical:SSS}
     \mu\leq -(n-1)|1/2-1/p|,
 \end{equation}if $1<p<\infty,$ and from $H^1_{\textnormal{comp}}(Y)$ into $L^1_{\textnormal{loc}}(X)$ if $p=1.$ Also, for $p=1$ in \eqref{critical:SSS} and as a consequence of the weak (1,1) estimate in  Tao \cite{Tao:weak11}, an operator $T$ of order $-(n-1)/2$ is locally of weak (1,1) type. This weak (1,1) estimate for Fourier integral operators with complex phases has been proved by the first and the last author in \cite{CardonaRuzhanskyFIO2024}.
 
The critical Seeger-Sogge-Stein order \eqref{critical:SSS} is sharp if $d\pi_{X\times Y}|_{\Lambda}$ has full rank equal to $2n-1$ somewhere and if $T$ is an elliptic operator. When  $d\pi_{X\times Y}|_{\Lambda}$ does not attain the maximal rank $2n-1,$ the upper bound for the order \eqref{critical:SSS} is not sharp and may depend of the singularities of $d\pi_{X\times Y}|_{\Lambda}.$  In conclusion, as it was observed in  \cite{SSS}, the mapping properties of the classes $ I^\mu_{\rho,1-\rho}(X,Y;\Lambda)$ of Fourier integral operators depend of the singularities and of the maximal rank of the canonical projection. So, an additional condition on the canonical relation $\Lambda$ was introduced in  \cite{SSS}, the so called, factorisation condition for $\pi_{X\times Y}$. Roughly speaking, it can be introduced as follows. 
\begin{definition}[The real factorisation condition]
    Assume that there exists $k\in \mathbb{N},$ with $0\leq k\leq n-1,$ such that for any $\lambda_0=(x_0,\xi_0,y_0,\eta_0)\in \Lambda,$ there is a conic neighborhood $U_{\lambda_0}\subset \Lambda$ of $\lambda_0,$ and a smooth homogeneous of order zero map $\pi_{\lambda_0}:U_{\lambda_0}\rightarrow \Lambda,$ such that 
 \begin{equation}\label{FC:Intro:1}
    \textnormal{(RFC):   } \textnormal{rank}(d\pi_{\lambda_0})=n+k,\textnormal{   and   }\pi_{X\times Y}|_{U_{\lambda_0}}=\pi_{X\times Y}|_{\Lambda}\circ \pi_{\lambda_0}.
\end{equation}Then we say that $\Lambda$ satisfies the {\it real factorisation condition} of rank $k.$ 
\end{definition}

Under the real factorisation condition (RFC) in  \eqref{FC:Intro:1},  Seeger, Sogge and Stein in   \cite{SSS} proved that the order
 \begin{equation}
     \mu\leq -(k+(n-k)(1-\rho))|1/2-1/p|,
\end{equation}guarantees that any Fourier integral operator $T\in I^\mu_{\rho,1-\rho}(X,Y;\Lambda),$ with $\rho\in [1/2,1],$ is bounded from $L^p_{\textnormal{comp}}(Y)$ into $L^p_{\textnormal{loc}}(X),$ and for $p=1,$ from $H^1_{\textnormal{comp}}(Y)$ into $L^1_{\textnormal{loc}}(X).$  All this concerns Fourier integral operators  with real phase functions.

The theory of Fourier integral operators with complex phases $T\in  I^\mu_{\rho,1-\rho}(X,Y;C) $ was systematically developed by Melin and Sj\"ostrand \cite{Melin:SjostrandI,Melin:SjostrandII}. The $L^2$-boundedness for these Fourier integral operators and with order zero was proved in \cite{Melin:SjostrandI}, and with greater generality by H\"ormander \cite{Ho2}. There is a rich structure when working with complex phases instead of the real ones.  First, there are no geometric obstructions like the non-triviality of the Maslov cohomology class. Second, one can use a single complex phase function to give a global parametrisation of a Fourier integral operator (see Laptev, Safarov and Vassiliev \cite{LaptevSafarov}).

We consider the complex canonical relation $C$ satisfying the {\it complex factorisation condition} introduced by the third author in \cite{Ruzhansky:CWI-book}, which we introduce as follows. Let us employ the tilde in $\widetilde{M}$ to  denote an almost analytic extension of a real manifold $M$, (we refer to \cite{Melin:SjostrandII}  for its extensive discussion and analysis).
\begin{definition}[The complex factorisation condition] Let  $C$ be  locally parametrised  by a complex phase function $\Phi,$ such that 
\begin{equation}
       \Phi_\tau:= \textnormal{Re}(\Phi)+\tau\textnormal{Im}(\Phi)
\end{equation} defines a local graph in $T^{*}({X\times Y})\setminus 0,$  for some $\tau\in \mathbb{R},$ for instance, with $|\tau|<\sqrt{3},$ for technical reasons, and such that the real-valued phase function $\Phi_\tau,$ satisfies the real smooth factorisation condition $\textnormal{(RFC)}$ in \eqref{FC:Intro:1} with some $k$ and $0\leq k\leq n-1.$ Then we  say that the canonical relation $C$ satisfies the complex factorisation condition (CFC) of rank $k$.     
\end{definition}
  It is important to mention that in the case of a real-valued phase function (CFC) 
is equivalent to the  real factorisation condition (RFC). 

It was proved in \cite{Ruzhansky:CWI-book}, extending the results in \cite{SSS} the following theorem for Fourier integral operators with complex phases.
\begin{theorem}\label{Ruzhansky}
Let $X$ and $Y$ be paracompact manifolds of dimension $n.$ A Fourier integral operator  $T\in  I^\mu_{\rho,1-\rho}(X,Y;C) ,$ $\rho\in [1/2,1],$ with order 
\begin{equation}\label{order:p}
     \mu\leq -({k+(n-k)(1-\rho)})|1/2-1/p|,\,\,
\end{equation}extends to a bounded operator from $L^p_{\textnormal{comp}}(Y)$ into $L^p_{\textnormal{loc}}(X),$ and for $p=1,$ from $H^1_{\textnormal{comp}}(Y)$ into $L^1_{\textnormal{loc}}(X).$    In particular, if $X$ and $Y$ are compact manifolds, the order condition \eqref{order:p} implies that $T$ extends to a bounded operator from $L^p(Y)$ into $L^p(X).$   
\end{theorem}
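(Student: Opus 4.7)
The plan is to follow the Seeger-Sogge-Stein scheme adapted to the complex-phase setting, splitting the argument into three parts: an $L^2$ endpoint, a critical endpoint $H^1_{\textnormal{comp}} \to L^1_{\textnormal{loc}}$, and complex interpolation together with duality for the intermediate range.

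First, by the complex factorisation condition I fix $\tau\in \mathbb{R}$ with $|\tau|<\sqrt{3}$ so that the real-valued phase $\Phi_\tau = \textnormal{Re}(\Phi)+\tau\,\textnormal{Im}(\Phi)$ locally parametrises a real canonical relation $\Lambda_\tau \subset T^*(X\times Y)\setminus 0$ satisfying the real factorisation condition (RFC) of rank $k$. Using a locally finite partition of unity, I reduce to the case of a symbol supported in a small conic neighbourhood of a point of $C$. The positivity $\textnormal{Im}(\Phi)\geq 0$ together with the almost analytic machinery of Subsections \ref{2:2} and \ref{2:4} permits a deformation of the $\theta$-contour in the oscillatory integral representing the kernel, which rewrites $T$ as a Fourier integral operator with real phase $\Phi_\tau$ and amplitude $\tilde a \in S^{\mu}_{\rho,1-\rho}$, up to smoothing errors. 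This transfer reduces the problem to the real-phase rank-$k$ factorisation regime of Seeger-Sogge-Stein.

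For the $L^2$ endpoint, the $T^*T$ method together with $L^2$-boundedness of pseudo-differential operators implies that $T\in I^\mu_{\rho,1-\rho}(X,Y;C)$ is $L^2$-bounded whenever $\mu\leq 0$, which is precisely \eqref{order:p} at $p=2$. For the critical endpoint $H^1_{\textnormal{comp}} \to L^1_{\textnormal{loc}}$ with order $\mu=-(k+(n-k)(1-\rho))/2$, I use the atomic characterisation of $H^1$ and aim to show $\|Ta\|_{L^1}\lesssim 1$ uniformly for $H^1$-atoms $a$. To do this, I perform the second dyadic decomposition on $T$: split dyadically $|\theta|\sim 2^j$, and for each $j$ decompose the unit sphere in $\theta$ into caps of diameter $\sim 2^{-j(1-\rho)/2}$ indexed by $\nu$. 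Using the rank-$k$ factorisation $\pi_{X\times Y}|_{U_{\lambda_0}} = \pi_{X\times Y}|_\Lambda \circ \pi_{\lambda_0}$, a change of coordinates aligned with $\pi_{\lambda_0}$ places each piece $T_j^\nu$ in a quasi-normal form whose kernel is controlled by integration-by-parts; summing over $\nu$ and $j$ with the almost-orthogonality inherent to the atomic decomposition yields the endpoint bound.

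The main obstacle will be the transfer step: one must verify that the Seeger-Sogge-Stein second dyadic decomposition and the associated change of variables bringing the kernel to a canonical form aligned with the $k$-dimensional factor continues to apply after replacing the original complex phase $\Phi$ by the real phase $\Phi_\tau$. Here a careful Melin-Sj\"ostrand complex stationary-phase analysis is required to control the error terms generated by the contour deformation, exploiting $\textnormal{Im}(\Phi)\geq 0$ to produce Gaussian decay away from the real critical set. Once both endpoints are in place, real interpolation with the $L^2$ bound delivers $L^p$-boundedness for $1<p\leq 2$, and duality applied to the adjoint $T^*\in I^\mu_{\rho,1-\rho}(Y,X;C^{-1})$, whose canonical relation satisfies the same complex factorisation condition with the same rank $k$, extends the result to $2\leq p<\infty$, exploiting the invariance of $|1/2-1/p|$ under $p\mapsto p'$.
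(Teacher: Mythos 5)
The paper does not prove Theorem \ref{Ruzhansky}; it is stated as a known result cited from the CWI monograph \cite{Ruzhansky:CWI-book}, so there is no internal proof to compare against. Your proposal must therefore be judged on its own.

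There is a genuine gap in your transfer step. The identity
\begin{equation*}
  e^{i\Phi} \;=\; e^{i\Phi_\tau}\, e^{-(1+i\tau)\,\textnormal{Im}(\Phi)}
\end{equation*}
is indeed what lies behind the appearance of $\Phi_\tau=\textnormal{Re}(\Phi)+\tau\textnormal{Im}(\Phi)$ in the complex factorisation condition, but it does \emph{not} rewrite $T$ as a Fourier integral operator with real phase $\Phi_\tau$ and new amplitude $\tilde a = a\,e^{-(1+i\tau)\textnormal{Im}(\Phi)}$ in the class $S^{\mu}_{\rho,1-\rho}$. Since $\textnormal{Im}(\Phi)$ is homogeneous of degree one, writing $\textnormal{Im}(\Phi)=|\theta|\,\psi(x,y,\theta/|\theta|)$ with $\psi\geq 0$, a single $\theta$-derivative hitting the exponential yields a factor of size $|\theta|\,|\partial_\omega\psi|\,e^{-|\theta|\psi}$; near the zero set of $\psi$ (where $\psi$ vanishes to second order for a positive Lagrangian), this is of order $|\theta|^{1/2}$, not $|\theta|^{1-\rho}$. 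Hence $\tilde a$ lies at best in $S^{\mu}_{1/2,1/2}$, so running the Seeger--Sogge--Stein machinery on the real-phase FIO $e^{i\Phi_\tau}\tilde a$ only gives the order threshold $-(k+(n-k)/2)|1/2-1/p|$, which is strictly worse than the claimed $-(k+(n-k)(1-\rho))|1/2-1/p|$ whenever $\rho>1/2$. Describing this rewriting as a ``$\theta$-contour deformation'' is also misleading: the integral is over real $\theta$ and $\Phi_\tau$ is a different real-valued function of the same real variables, not the restriction of an analytic continuation of $\Phi$ to a shifted contour.

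The actual difficulty, and what the CWI-book argument handles, is that the damping factor $e^{-\textnormal{Im}(\Phi)}$ must be used to \emph{gain} Gaussian decay in the second dyadic (angular) decomposition, not merely be absorbed into a degraded symbol class. The real phase $\Phi_\tau$ enters to identify the relevant geometry --- the rank-$k$ factorisation of the projection, the direction of the caps, the parametrisation of $C$ --- while the kernel estimates for each dyadic-angular piece must be carried out on the complex phase directly, with the extra exponential decay compensating the loss you would otherwise incur from the $\theta$-derivatives of $\textnormal{Im}(\Phi)$. Your outline also leaves the cap scale (you wrote $2^{-j(1-\rho)/2}$) and the interpolation argument (you say real interpolation; the sharp threshold in the SSS scheme comes from analytic interpolation of an operator family $T_z$ with order varying in $z$) only sketchily justified, though these are secondary to the main gap above.
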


\subsection{$L^p$-bounds for eigenfunctions of the Laplacian}\label{Sogge:section} Let us consider $(\phi_k,\lambda_k)$ being the spectral data of the operator $\sqrt{1-\Delta_g}$ on a compact Riemannian manifold $(X,g).$ We assume the eigenfunctions $\phi_k$ to be $L^2$-normalised. For our further analysis will be fundamental to have upper bounds for the $L^p$-norms
$$\Vert \phi_k\Vert_{L^p(X)}=\left(\smallint_X|\phi_k(x)|^pdx\right)^{\frac{1}{p}},\,1<p<\infty,\textnormal{ and }\Vert \phi_k\Vert_{L^\infty(X)}=\sup_{x\in X}|\phi_k(x)|.$$
These quantities are a measure of the concentration of $\phi_k$ in $X.$ A celebrated result by Sogge has provided universal upper bounds for $\Vert \phi_k\Vert_{L^p(X)}.$ We record Sogge's result as follows, see \cite{Sogge1988} for details.
\begin{theorem}On a compact Riemmanian manifold $(X,g)$ we have
\begin{equation}
    \Vert \phi_k\Vert_{L^p(X)}\lesssim \lambda_k^{\varphi(p)},\,2\leq p\leq \infty,
\end{equation}where 
\begin{equation}
\varphi(p)= \left\{ \begin{array}{lcc}  n\left(\frac{1}{2}-\frac{1}{p}\right)-\frac{1}{2}  , & \textnormal{if}  & p_c=\frac{2(n+1)}{n-1}\leq p\leq \infty, \\ 
\\ \frac{n-1}{2}\left(\frac{1}{2}-\frac{1}{p}\right) & \textnormal{if} & 2 \leq p\leq p_c . \end{array} \right.
\end{equation}
    
\end{theorem}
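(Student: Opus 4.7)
The plan is to reduce the eigenfunction bound to a quasi-mode (spectral cluster) estimate, represent the quasi-mode operator via the half-wave propagator and the Hadamard parametrix, and then run a $TT^*$ argument whose endpoint is equivalent to the Stein--Tomas restriction theorem for the sphere. Finally I interpolate to reach every $p$ in $[2,\infty]$.

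\emph{Step 1 (Reduction to quasi-modes).} Fix $\rho \in \mathcal{S}(\mathbb{R})$ with $\rho(0)=1$, $\rho \geq 0$, and $\widehat{\rho}$ supported in $(-\varepsilon,\varepsilon)$ where $\varepsilon$ is less than half the injectivity radius of $(X,g)$ (positive by compactness). Since $\phi_k = \rho(\sqrt{-\Delta_g} - \lambda_k)\phi_k$ and $\|\phi_k\|_{L^2}=1$, it suffices to prove the operator estimate
\begin{equation*}
\bigl\| \rho(\sqrt{-\Delta_g} - \lambda) \bigr\|_{L^2(X) \to L^p(X)} \lesssim \lambda^{\varphi(p)}, \qquad \lambda \geq 1.
\end{equation*}

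\emph{Step 2 (Parametrix and kernel profile).} By Fourier inversion and the spectral theorem,
\begin{equation*}
\rho(\sqrt{-\Delta_g}-\lambda) = \frac{1}{2\pi}\int_{-\varepsilon}^{\varepsilon} \widehat{\rho}(t)\, e^{it\lambda}\, e^{-it\sqrt{-\Delta_g}}\, dt.
\end{equation*}
On $|t|<\varepsilon$ the half-wave propagator admits a Hadamard/Lax parametrix as a Fourier integral operator with phase $\langle \exp_y^{-1}(x),\xi\rangle - t|\xi|_{g(y)}$. Carrying out stationary phase in $(t,|\xi|)$ represents the quasi-mode operator, modulo smoothing, as an oscillatory integral whose Schwartz kernel $K_\lambda(x,y)$ satisfies
\begin{equation*}
|K_\lambda(x,y)| \lesssim \lambda^{(n-1)/2}\bigl(1+\lambda\, d_g(x,y)\bigr)^{-(n-1)/2}
\end{equation*}
in a diagonal neighbourhood, which is exactly the profile of the Fourier transform of surface measure on $S^{n-1}\subset\mathbb{R}^n$ rescaled by $\lambda$.

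\emph{Step 3 ($TT^*$ and the critical exponent $p_c$).} Write $T_\lambda := \rho(\sqrt{-\Delta_g}-\lambda)$. Boundedness $T_\lambda:L^2\to L^p$ is equivalent, via $TT^*$, to boundedness $T_\lambda T_\lambda^*: L^{p'}\to L^p$ with the squared norm. The kernel of $T_\lambda T_\lambda^*$ inherits the structure $\lambda^{n-1}\, a(x,y,\lambda)\, e^{i\lambda d_g(x,y)}$ near the diagonal. At the critical exponent $p_c = \frac{2(n+1)}{n-1}$, the required inequality for this kernel is precisely the oscillatory integral form of the Stein--Tomas restriction theorem for the sphere, giving
\begin{equation*}
\bigl\|T_\lambda\bigr\|_{L^2\to L^{p_c}} \lesssim \lambda^{\tfrac{n-1}{2}(\tfrac12-\tfrac{1}{p_c})} = \lambda^{\varphi(p_c)}.
\end{equation*}
This is the hardest step: the value $p_c$ is dictated by the $(n-1)$-dimensional curvature of the cosphere bundle, and one cannot avoid the full strength of Stein--Tomas.

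\emph{Step 4 (Interpolation to the full range).} The kernel bound of Step 2 also yields directly $\|T_\lambda\|_{L^2\to L^\infty} \lesssim \lambda^{(n-1)/2} = \lambda^{\varphi(\infty)}$, either via Cauchy--Schwarz applied to $K_\lambda(x,\cdot)$ or equivalently from the local Weyl law for the spectral function. Riesz--Thorin interpolation between the endpoints $p=p_c$ and $p=\infty$ produces the first branch $\varphi(p) = n(\tfrac12-\tfrac1p)-\tfrac12$ for $p_c\leq p\leq\infty$, while interpolation between the trivial bound at $p=2$ and the $p_c$ endpoint produces the second branch $\varphi(p) = \tfrac{n-1}{2}(\tfrac12-\tfrac1p)$ for $2\leq p\leq p_c$. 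Applying the operator estimate to the unit vector $\phi_k$ with $\lambda = \lambda_k$ yields the stated pointwise eigenfunction bound.
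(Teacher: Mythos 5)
The paper does not contain a proof of this theorem; it simply records Sogge's $L^p$ bounds and defers to \cite{Sogge1988}, so there is no internal argument to compare against. Your outline reproduces the architecture of Sogge's proof --- reduction to the smoothed spectral projector $\rho(\sqrt{-\Delta_g}-\lambda)$, a Lax/Hadamard parametrix for $e^{-it\sqrt{-\Delta_g}}$ on a short time interval, a $TT^*$ argument whose critical endpoint $p_c=\tfrac{2(n+1)}{n-1}$ is settled by a Carleson--Sj\"olin/H\"ormander-type oscillatory integral estimate (the variable-coefficient form of Stein--Tomas), and interpolation with the $L^2\to L^2$ and $L^2\to L^\infty$ endpoints --- so the route is the right one.

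There is, however, a slip in the kernel estimate of Step~2 that is not consistent with your own Steps~3 and~4. The correct near-diagonal profile of the quasi-mode kernel is
\[
|K_\lambda(x,y)|\ \lesssim\ \lambda^{n-1}\bigl(1+\lambda\,d_g(x,y)\bigr)^{-(n-1)/2},
\]
with prefactor $\lambda^{n-1}$, not $\lambda^{(n-1)/2}$: on the diagonal the kernel is, up to smoothing, the spectral density in a unit band, which by the local Weyl law is of size $\lambda^{n-1}$, and for $d_g(x,y)\gtrsim\lambda^{-1}$ stationary phase on the cosphere gives $\lambda^{(n-1)/2}d_g(x,y)^{-(n-1)/2}=\lambda^{n-1}(\lambda d_g)^{-(n-1)/2}$. (This is also what "the Fourier transform of surface measure on the sphere of radius $\lambda$" gives, so the prose and the displayed formula in your Step~2 do not match.) With your displayed prefactor $\lambda^{(n-1)/2}$, the Cauchy--Schwarz computation in Step~4 yields only $\|T_\lambda\|_{L^2\to L^\infty}\lesssim 1$, not $\lambda^{(n-1)/2}$; and the $TT^*$ kernel in Step~3 would come out with prefactor $\lambda^{-1}$, not the $\lambda^{n-1}$ you (correctly) assert there. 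In other words, Steps~3 and~4 already use the correct $\lambda^{n-1}$ normalisation implicitly, so the final exponents are right, but the bound you write down in Step~2 is off by a factor $\lambda^{(n-1)/2}$ and would not, if taken at face value, deliver the claimed endpoints. Replacing the prefactor by $\lambda^{n-1}$ makes the three steps consistent and the interpolation closes exactly as described, recovering both branches of $\varphi(p)$.
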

The growth of the $L^p$-norms of eigenfunctions has essentially two phases which meet at the critical index $p_c=\frac{2(n+1)}{n-1}.$ Because of the H\"older inequality, for $p$ in the interval $[1,2],$ the $L^p$-norm of an eigenfunction can be estimated from above with its corresponding $L^2$-norm and then for $1<p<2,$ $\varphi(p)=0.$ Then, one can write 
\begin{equation}\label{Sogge:varphi}
\varphi(p)= \left\{ \begin{array}{lcc}  n\left(\frac{1}{2}-\frac{1}{p}\right)-\frac{1}{2}  , & \textnormal{if}  & p_c=\frac{2(n+1)}{n-1}\leq p\leq \infty, \\ 
\\ \frac{n-1}{2}\left(\frac{1}{2}-\frac{1}{p}\right) & \textnormal{if} & 2 \leq p\leq p_c . \\ 
\\ 0 & \textnormal{if} & 1 \leq p\leq 2 .

\end{array} \right.
\end{equation}In our further analysis, we use the function $\varphi(\cdot)$ to describe the orders of Fourier integral operators allowing the membership of these classes on the ideal of $r$-nuclear operators.

\subsection{$r$-Nuclear operators on Banach spaces in the sense of Grothendieck}\label{Nuclearity:section}

 By following the classical reference  Grothendieck \cite{Grothendieck1955}, we  recall that a densely defined linear operator $T:D(T)\subset E\rightarrow F$  (where $D(T)$ is the domain of $T,$ and $E,F$ are Banach spaces) extends to a  $r$-nuclear operator from $E$ to $F$, if
there exist  sequences $(e_n ')_{n\in\mathbb{N}_0}$ in $ E'$ (the dual space of $E$) and $(y_n)_{n\in\mathbb{N}_0}$ in $F$ such that, the discrete representation
\begin{equation}\label{nuc}
Tf=\sum_{n\in\mathbb{N}_0} e_n'(f)y_n,\,\,\, \textnormal{ with }\,\,\,\sum_{n\in\mathbb{N}_0} \Vert e_n' \Vert^r_{E'}\Vert y_n \Vert^r_{F}<\infty,
\end{equation} holds true for all $f\in D(T).$
\noindent The class of $r-$nuclear operators is usually endowed with the natural semi-norm
\begin{equation}
n_r(T):=\inf\left\{ \left\{\sum_n \Vert e_n' \Vert^r_{E'}\Vert y_n \Vert^r_{F}\right\}^{\frac{1}{r}}: T=\sum_n e_n'\otimes y_n \right\}
\end{equation}
\noindent and, if $r=1$, $n_1(\cdot)$ is a norm and we obtain the ideal of nuclear operators. In addition, when $E=F$ is a Hilbert space and  $r=1$ the definition above agrees with that of trace class operators. Moreover,  for the case of Hilbert spaces $H$, the set of $r$-nuclear operators agrees with the Schatten-von Neumann class of order $r,$ with $0<r\leq 1,$ see Oloff \cite{Oloff1970}. 

Let $H$ be a Hilbert space. We recall that for any $r>0,$ a compact operator $T :H\rightarrow H $ belongs to the Schatten von-Neumann ideal $S_{r}(H),$ if the sequence of its singular values $\{s_{n}(T)\}_{n\in \mathbb{N}_0}$ (formed by the eigenvalues of the operator $\sqrt{T^*T}$) belongs to $\ell^r(\mathbb{N}_0),$ that is, if  $$\Vert T\Vert_{S_r}^r:=\sum_{n=1}^\infty s_{n}(T)^r<\infty.$$
For a modern description of other ideals of operators on Hilbert spaces and on Banach spaces we refer the reader to Pietsch  \cite{P,P2}.

In order to determine a relation of the eigenvalues of Fourier integral operators we recall that the nuclear trace of a $r$-nuclear operator on a Banach space coincides with its spectral trace, provided that $0<r\leq \frac{2}{3}.$ This result is due to Grothendieck \cite{Grothendieck1955}.  However, on $L^p(\mu)$-spaces for $\frac{2}{3}\leq r\leq 1$ we present the following extension to Grothendieck's theorem due to Reinov and Latif \cite{Reinov2}). We recall that since the Lebesgue spaces satisfy the {\it approximation property}, the trace of a nuclear operator $T$ as in \eqref{nuc} is well-defined and given by
\begin{equation}\label{Trace:nuclear}
    \textnormal{Tr}(T):=\sum_{j\in\mathbb{N}_0}e_j'(y_j).
\end{equation}
\begin{theorem}\label{Grothendieck:Lidskii:1} Let $T:L^p(\mu)\rightarrow L^p(\mu)$ be a $r$-nuclear operator as in \eqref{nuc} and let $1\leq p\leq \infty$. If $\frac{1}{r}=1+|\frac{1}{p}-\frac{1}{2}|,$ then, 
\begin{equation}
\textnormal{Tr}(T)=\sum_{j\in\mathbb{N}_0}\lambda_j(T)
\end{equation}
where $\lambda_j(T),$ $j\in\mathbb{N}$ is the sequence of eigenvalues of $T$ with multiplicities taken into account. 
\end{theorem}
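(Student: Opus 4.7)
The approach is to reduce the problem to the Hilbert space setting, where Lidskii's classical theorem holds unconditionally for trace-class operators. The condition $1/r = 1 + |1/p - 1/2|$ interpolates between Grothendieck's original case $r = 2/3$ (attained at $p=1$ or $p=\infty$) and the trace-class case $r=1$ (at $p=2$), and it matches precisely the factorisation exponents forced by the geometry of $L^p(\mu)$.

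First, starting from a discrete representation $T = \sum_n e_n' \otimes y_n$ with $\sum_n \|e_n'\|_{E'}^r \|y_n\|_F^r < \infty$, I would factor $T$ canonically as $T = U \Lambda V$, where $V : L^p(\mu) \to \ell^{\infty}$ is induced by the (suitably normalised) functionals $e_n'$, where $\Lambda : \ell^{\infty} \to \ell^{1}$ is a diagonal multiplier whose diagonal sequence lies in $\ell^r$, and where $U : \ell^{1} \to L^p(\mu)$ is the synthesis operator associated to the $y_n$'s. Next, I would invoke Grothendieck's theorem together with the type-cotype structure of $L^p(\mu)$ (cotype $\max(p,2)$ and type $\min(p,2)$) in order to factor each of $V$ and $U$ through a Hilbert space. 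Combined with the $r$-summability of the coefficient sequence, this yields a decomposition $T = BA$ with bounded operators $A : L^p(\mu) \to H$ and $B : H \to L^p(\mu)$, such that $AB$ is of trace class on $H$ and $\textnormal{Tr}_H(AB)$ equals the nuclear trace defined in \eqref{Trace:nuclear}.

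Finally, the standard spectral identity $\textnormal{spec}(AB) \setminus \{0\} = \textnormal{spec}(BA) \setminus \{0\}$ (with matching algebraic multiplicities), combined with Lidskii's classical theorem applied to the trace-class operator $AB$ on $H$, gives
\begin{equation*}
\textnormal{Tr}_{L^p(\mu)}(T) \,=\, \textnormal{Tr}_H(AB) \,=\, \sum_{j\in\mathbb{N}_0} \lambda_j(AB) \,=\, \sum_{j\in\mathbb{N}_0} \lambda_j(T),
\end{equation*}
which is the desired Grothendieck-Lidskii identity.

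The main obstacle lies in the second step: proving that the condition $1/r = 1 + |1/p - 1/2|$ is precisely the threshold ensuring that the Grothendieck-Pietsch factorisation produces a trace-class intermediate operator on a Hilbert space. This demands a careful bookkeeping of the Pietsch summing norms of the factors (so that the $\ell^r$-summability of the coefficients is transformed into $\ell^1$-summability of the singular values of $AB$), and it exploits in an essential way that $L^p(\mu)$ has the approximation property, which guarantees that the nuclear trace of $T$ in \eqref{Trace:nuclear} is independent of the chosen discrete representation and invariant under the Hilbert-space factorisation.
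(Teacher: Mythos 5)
The paper does not prove Theorem~\ref{Grothendieck:Lidskii:1}; it is quoted verbatim as an external result of Reinov and Latif \cite{Reinov2}, and the preceding sentence explicitly attributes it to them. So there is no ``paper's own proof'' against which to check your argument, and you should treat this as a cited black box rather than something to be re-established within the paper.

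Evaluated on its own terms, your proposal captures the correct overall philosophy of results of Grothendieck--Lidskii type: factor the $r$-nuclear operator on $L^p(\mu)$ through a Hilbert space $H$ so that the intermediate operator is trace class, invoke the classical Lidskii theorem on $H$, and transfer the conclusion back via the spectral identity $\operatorname{spec}(AB)\setminus\{0\}=\operatorname{spec}(BA)\setminus\{0\}$ together with the invariance of the nuclear trace (valid because $L^p(\mu)$ has the approximation property). The difficulty is concentrated in your second step, and as written it contains a genuine gap. You say you would ``invoke Grothendieck's theorem together with the type-cotype structure of $L^p(\mu)$ in order to factor each of $V$ and $U$ through a Hilbert space.'' But Grothendieck's theorem factors operators from an $\mathcal L_\infty$-space to an $\mathcal L_1$-space through Hilbert space; it does not apply to $V:L^p\to\ell^\infty$ or $U:\ell^1\to L^p$ as such, and neither map factors through Hilbert space merely because $L^p$ has type $\min(p,2)$ and cotype $\max(p,2)$. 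What is actually needed is a quantitative Pietsch-type argument: one must show that, under the relation $1/r=1+|1/p-1/2|$, the diagonal coefficient sequence in $\ell^r$ can be split multiplicatively between the two sides of the factorisation in such a way that each composite is $2$-summing with the right norm, and that the resulting operator $AB$ on $H$ has summable singular values. The identity $1/r=1+|1/p-1/2|$ must emerge as the precise threshold from this bookkeeping of summing norms; your sketch asserts this but does not derive it, and a naive application of type/cotype alone does not give it. If you want to produce a self-contained proof rather than cite \cite{Reinov2}, the missing ingredient is an eigenvalue theorem of Johnson--K\"onig--Maurey--Retherford type for absolutely summing operators, combined with a careful decomposition of the nuclear representation into summing factors; this is substantially more delicate than the one-paragraph sketch suggests.
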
 The previous theorem establishes a criterion for the equality of the nuclear trace $\textnormal{Tr}(T)$ with the spectral trace $\sum_{j}\lambda_j(T).$ However, if we want to determine under which conditions the sequence of eigenvalues  $\lambda_j(T),$ $j\in\mathbb{N},$ of a $r$-nuclear operator belongs to some $\ell^q(\mathbb{N}_0)$ space, we have the following result also due to  Reinov and Latif \cite{Reinov1}.
\begin{theorem}\label{Reinov2:q:summability} Let $T:L^p(\mu)\rightarrow L^p(\mu)$ be a $r$-nuclear operator as in \eqref{nuc}. Let $0<r\leq 1,$ $p\in [1,\infty],$ and let $q$ be determined by the equality
\begin{equation}
    \frac{1}{r}=\frac{1}{q}+\left|\frac{1}{p}-\frac{1}{2}\right|.
\end{equation}
Then, 
\begin{equation}
\Vert \{\lambda_{j}(T)\}_{j\in \mathbb{N}_0}\Vert=\left(\sum_{j\in\mathbb{N}_0}|\lambda_j(T)|^{q}\right)^{\frac{1}{q}}\leq C_pn_r(T),
\end{equation}
where $\lambda_n(T),$ $n\in\mathbb{N},$ is the sequence of eigenvalues of $T$ with multiplicities taken into account. 
\end{theorem}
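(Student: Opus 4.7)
The plan is to reduce the eigenvalue summability assertion to the interaction between a standard factorisation of $r$-nuclear operators through $\ell^{r}$ and the geometric (type/cotype) structure of the Lebesgue spaces $L^{p}(\mu)$, and then close the argument using Weyl-type inequalities for eigenvalues of Banach-space operators.

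First, starting from the nuclear representation $Tf=\sum_{n} e_{n}'(f)\,y_{n}$ with $\sum_{n}\|e_{n}'\|^{r}\|y_{n}\|^{r}\le n_{r}(T)^{r}+\varepsilon$, I would normalise and factor $T$ in the classical way as $T=B\circ\Lambda\circ A$, where $A\colon L^{p}(\mu)\to\ell^{\infty}$ is bounded by $f\mapsto(e_{n}'(f)/\|e_{n}'\|)$, $\Lambda$ is the diagonal multiplier by the $\ell^{r}$-sequence $(\|e_{n}'\|\|y_{n}\|)$, and $B\colon\ell^{r}\to L^{p}(\mu)$ sends $(c_{n})\mapsto\sum c_{n}y_{n}/\|y_{n}\|$. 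This exhibits $T$ as a composition through $\ell^{r}$ with operator norms controlled by $n_{r}(T)$.

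Second, I would pass from this factorisation to an estimate on the Weyl numbers $x_{n}(T)$ of $T$. The key input is that $L^{p}(\mu)$ has cotype $\max(p,2)$ and type $\min(p,2)$, and admits a factorisation through Hilbert space with a loss measured precisely by $|1/p-1/2|$. Combining this with the factorisation of Step~1 (and using, for instance, the eigenvalue estimates of Pietsch for operators factoring through $\ell^{r}$, together with interpolation between the Hilbert case $p=2$, where $q=r$, and the extremal cases $p=1,\infty$, where $1/q=1/r-1/2$), one obtains Weyl-number bounds of the form
\begin{equation*}
\Bigl(\sum_{n}x_{n}(T)^{q}\Bigr)^{1/q}\;\leq\;C_{p}\,n_{r}(T),\qquad \frac{1}{q}=\frac{1}{r}-\Bigl|\frac{1}{p}-\frac{1}{2}\Bigr|.
\end{equation*}

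Finally, I would invoke the Weyl inequality for Banach-space operators, which dominates $(\lambda_{j}(T))$ by $(x_{n}(T))$ in $\ell^{q}$-norm, to conclude $\bigl(\sum_{j}|\lambda_{j}(T)|^{q}\bigr)^{1/q}\le C_{p}n_{r}(T)$, which is the claimed bound. The main obstacle is the middle step: accurately extracting the geometric loss $|1/p-1/2|$ from the factorisation through Hilbert space to pass from an $\ell^{r}$-factorisation to $\ell^{q}$-summability of the Weyl numbers. This is precisely where the non-Hilbertian geometry of $L^{p}(\mu)$ enters and where the strength of the Reinov--Latif refinement over the classical Grothendieck $r\le 2/3$ threshold resides; the constant $C_{p}$ blows up exactly in the two endpoints $p=1$ and $p=\infty$, reflecting the failure of unconditional structure there.
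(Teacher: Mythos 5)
Note first that the paper does not prove this statement: it is quoted verbatim as a result of Reinov and Latif \cite{Reinov1}, so there is no internal proof here to compare against; you are in effect being asked to reconstruct their argument.

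Your outline has the right scaffolding at both ends: factoring the $r$-nuclear $T$ through a sequence space (your $T = B\circ\Lambda\circ A$), and invoking K\"onig's Weyl inequality to dominate the eigenvalue sequence $(\lambda_j(T))_j$ by the Weyl numbers $(x_n(T))_n$ in $\ell^q$-quasi-norm, are both legitimate and standard. The genuine gap is the middle step. You assert that combining the $\ell^r$-factorisation with ``interpolation between the Hilbert case $p=2$ and the extremal cases $p=1,\infty$'' produces the bound $\bigl(\sum_n x_n(T)^q\bigr)^{1/q}\le C_p\,n_r(T)$ with $1/q = 1/r - \lvert 1/p-1/2\rvert$. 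That interpolation is not available: $T$ is a single fixed operator on a single $L^p(\mu)$, not an analytic family indexed by $p$, and the scale of operators with prescribed $s$-number decay is not a compatible couple to which complex or real interpolation applies. The actual mechanism in Reinov--Latif that produces the loss $\lvert 1/p-1/2\rvert$ is a concrete factorisation device (roughly, passing from $L^p$ to a weighted $L^2$ via a change of density and tracking the degradation of approximation/Weyl numbers through that passage, combined with the fact that $r$-nuclear maps into a Hilbert space are in $S_r$); as written, your step 2 is essentially a restatement of the conclusion rather than a derivation of it. A further point to flag: for $r<1$ your space $\ell^r$ is only quasi-Banach, so both your map $B\colon\ell^r\to L^p$ and the rotated operator $\Lambda A B$ on $\ell^r$ need the quasi-Banach version of the eigenvalue and $s$-number machinery, which carries its own constants.
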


\section{Nuclearity of Fourier integral operators}\label{Nuc:sec}
In this section we investigate the $r$-nuclearity of Fourier integral operators. We are going to use the following notation.
\begin{itemize}
    \item The function $p\mapsto\varphi(p),$ in \eqref{Sogge:varphi} denotes the index of Sogge describing the size of the $L^p$-norm
    $$ \Vert \phi_k\Vert_{L^p(X)}\lesssim \lambda_k^{\varphi(p)},\,1\leq p\leq \infty,$$ for any $L^2$-normalised eigenfunction $\phi_k$ with corresponding eigenvalue $\lambda_k$. We record that
    \begin{equation}\label{Sogge:varphi:2}
\varphi(p)= \left\{ \begin{array}{lcc}  n\left(\frac{1}{2}-\frac{1}{p}\right)-\frac{1}{2}  , & \textnormal{if}  & p_c=\frac{2(n+1)}{n-1}\leq p\leq \infty, \\ 
\\ \frac{n-1}{2}\left(\frac{1}{2}-\frac{1}{p}\right) & \textnormal{if} & 2 \leq p\leq p_c . \\ 
\\ 0 & \textnormal{if} & 1 \leq p\leq 2 .

\end{array} \right.
\end{equation}
    \item For $X=Y,$ the class of Fourier integral operators $I^{\mu}_{\rho,1-\rho}(X,X;C)$ will be denoted by $I^{\mu}_{\rho,1-\rho}(X;C).$  If the canonical relation $C=\Lambda$ is parametrised by a real-valued phase function, the corresponding class of Fourier integral operators will be denoted $I^{\mu}_{\rho,1-\rho}(X,Y;\Lambda).$
    \item The system of eigenvalues of a compact Fourier integral operator $T, $ will be denoted by $\{\lambda_j(T)\}_{j\in \mathbb{N}_0}$ and its sequence of singular values, namely, the eigenvalues of the operator $\sqrt{T^*T}$ will be denoted by $\{s_j(T)\}_{j\in \mathbb{N}_0}.$
\end{itemize}

We start by reviewing the nuclearity of Bessel potentials on Lebesgue spaces.
\subsection{Nuclearity of Bessel potentials on compact manifolds}\label{sect:bessel}
Above we have denoted by  $(\phi_\ell,\lambda_\ell),$ $\ell\in \mathbb{N}_0,$ the spectral data of the operator $\sqrt{1-\Delta_g}$ on a compact Riemannian manifold $(M,g).$ In this section we change such a notation since we have to introduce a suitable terminology involving the dimension $d_\ell$ of each eigenspace $E_\ell:=\textnormal{Ker}(\sqrt{1-\Delta_g}-\lambda_{\ell}).$  Hence,  for each $\ell,$ we are going to consider an orthonormal basis 
$\{e^{k}_{\ell},\,1\leq k\leq d_\ell\}$
of $E_\ell.$ Note that the set $\{e^{k}_{\ell},\,1\leq k\leq d_\ell,\,\ell\in \mathbb{N}_0\}$ provides an orthonormal basis of $L^2(M).$ In the next proposition we investigate the nuclearity of the powers $\sqrt{1-\Delta_g}^{-s}.$
\begin{proposition}\label{Propo:1} Let $0<r\leq 1.$
    Let $(M,g)$ be a closed Riemannian manifold and let $p_1, p_2$ be such that $1\leq p_1<\infty, 1\leq p_2\leq\infty$. Then, the order condition
    \begin{equation}
        s> \frac{n}{r}+\varphi(p_2)+\varphi(p_1'),
    \end{equation}implies that  $E=\sqrt{1-\Delta_g}^{-s}:L^{p_1}(M)\rightarrow L^{p_2}(M)$ is $r$-nuclear.
\end{proposition}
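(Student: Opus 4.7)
The plan is to realise $E$ explicitly as an $r$-nuclear operator via its spectral resolution, and then to estimate the resulting double series by combining Sogge's $L^p$-bounds recorded in Subsection \ref{Sogge:section} with Weyl's asymptotic law for $\sqrt{1-\Delta_g}$. Concretely, I would expand $E$ in the orthonormal basis $\{e_\ell^k:\,1\leq k\leq d_\ell,\,\ell\in\mathbb{N}_0\}$ of $L^2(M)$ as
\[
Ef(x)=\sum_{\ell}\sum_{k=1}^{d_\ell}\lambda_\ell^{-s}\,\langle f,e_\ell^k\rangle_{L^2}\, e_\ell^k(x),
\]
and take as candidate nuclear representation the functionals $e'_{\ell,k}:f\mapsto \int_M f\,\overline{e_\ell^k}\,dx$, viewed as elements of $(L^{p_1}(M))'=L^{p_1'}(M)$, paired with the vectors $y_{\ell,k}:=\lambda_\ell^{-s}e_\ell^k\in L^{p_2}(M)$. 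The hypothesis $p_1<\infty$ is precisely what makes this duality identification valid.

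The norms of both factors are controlled by Sogge's bound \eqref{Sogge:varphi:2}, which gives
\[
\|e'_{\ell,k}\|_{L^{p_1'}}\lesssim \lambda_\ell^{\varphi(p_1')},\qquad \|y_{\ell,k}\|_{L^{p_2}}\lesssim \lambda_\ell^{-s+\varphi(p_2)},
\]
so that
\[
\sum_{\ell,k}\|e'_{\ell,k}\|^{r}_{L^{p_1'}}\,\|y_{\ell,k}\|^{r}_{L^{p_2}}\;\lesssim\;\sum_{\ell}d_\ell\,\lambda_\ell^{-r(s-\varphi(p_1')-\varphi(p_2))}.
\]
By Weyl's law $N(\lambda)=\sum_{\lambda_\ell\leq \lambda}d_\ell\sim C_M \lambda^n$, a summation-by-parts argument shows that $\sum_{\ell} d_\ell\,\lambda_\ell^{-t}<\infty$ if and only if $t>n$. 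Setting $t=r(s-\varphi(p_1')-\varphi(p_2))$, convergence is equivalent to the hypothesised inequality $s>\tfrac{n}{r}+\varphi(p_1')+\varphi(p_2)$.

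Since $0<r\leq 1$ yields the embedding $\ell^{r}\hookrightarrow \ell^{1}$, the estimate above implies that $\sum_{\ell,k}e'_{\ell,k}\otimes y_{\ell,k}$ converges absolutely in the operator norm to a nuclear operator $L^{p_1}(M)\to L^{p_2}(M)$, which by construction is $r$-nuclear with an $n_r$-seminorm bounded by the right-hand side of the preceding display. That this operator actually equals $E$ follows from the fact that on the dense subspace $C^{\infty}(M)\subset L^{p_1}(M)$ the sum agrees with the standard spectral expansion of $Ef$, and both operators are continuous from $L^{p_1}(M)$ into $L^{p_2}(M)$. There is no substantial obstacle in the argument beyond this assembly; the only care needed is at the endpoints $p_1=1$ and $p_2=\infty$, where $\varphi$ takes the value $(n-1)/2$, but Sogge's bound in the form \eqref{Sogge:varphi:2} already covers these ranges, so the reduction proceeds identically.
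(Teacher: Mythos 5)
Your proposal is correct and follows essentially the same route as the paper: expand the Bessel potential in the eigenbasis $\{e_\ell^k\}$, take the rank-one decomposition with $e'_{\ell,k}\in L^{p_1'}$ and $y_{\ell,k}=\lambda_\ell^{-s}e_\ell^k\in L^{p_2}$, apply Sogge's $L^p$-bounds to each factor, and reduce to the convergence of $\sum_\ell d_\ell\,\lambda_\ell^{-r(s-\varphi(p_2)-\varphi(p_1'))}$, which holds precisely when the exponent exceeds $n$. The only cosmetic difference is that the paper cites an auxiliary proposition (relying on ellipticity of $\sqrt{1-\Delta_g}$) for the threshold $\kappa>n$, whereas you appeal directly to Weyl's law with summation by parts — these are the same fact.
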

\begin{proof}
  The kernel of $E=\sqrt{1-\Delta_g}^{-s}$ can be written as (see e.g.  \cite[Page 787]{DelgadoRuzhanskyJAM2018})
\begin{equation}
    K(x,y)=\sum_{\ell=0}^{\infty}\sum_{k=1}^{d_\ell} \lambda_\ell^{-s}e_{\ell}^{k}(x)\overline{e_{\ell}^{k}(y)}.
\end{equation} We set 
\begin{equation}
    g_{\ell,k}(x):= \lambda_\ell^{-s}e_{\ell}^{k}(x),\, \quad h_{\ell,k}(y)= \overline{e_{\ell}^{k}(y)}.
\end{equation}Then, note that
\begin{align*}
    \sum_{\ell=0}^{\infty}\sum_{k=1}^{d_\ell}\Vert  g_{\ell,k} \Vert_{L^{p_2}}^r\Vert h_{\ell,k}\Vert_{L^{p_1'}}^r &= \sum_{\ell=0}^{\infty}\sum_{k=1}^{d_\ell} \lambda_\ell^{-sr} \Vert  e_{\ell}^{k} \Vert_{L^{p_2}}^r\Vert e_{\ell}^{k}\Vert_{L^{p_1'}}^r\\
 &\lesssim \sum_{\ell=0}^{\infty}\sum_{k=1}^{d_\ell} \lambda_\ell^{-sr} \lambda_\ell^{\varphi(p_2) r}\lambda_\ell^{\varphi(p_1')r}\\
 &\asymp \sum_{\ell=0}^{\infty}\sum_{k=1}^{d_\ell} \lambda_\ell^{-sr} \lambda_\ell^{\varphi(p_2) r}\lambda_\ell^{\varphi(p_1')r}\\
 &= \sum_{\ell=0}^{\infty}{d_\ell} \lambda_\ell^{-r(s-\varphi(p_2)-\varphi(p_1')}.
\end{align*} Since 
\begin{equation*}
    \sum_{\ell=0}^{\infty}{d_\ell} \lambda_\ell^{-r(s-\varphi(p_2)-\varphi(p_1')}<\infty,
\end{equation*} if and only if
\begin{equation}\label{order:r}
    r(s-\varphi(p_2)-\varphi(p_1')>n,
\end{equation}
 the order condition 
$$s>\frac{n}{r}+\varphi(p_2)+\varphi(p_1')$$ implies that $E:L^{p_1}(M)\rightarrow L^{p_2}(M)$ is $r$-nuclear.  Note that in \eqref{order:r} we have used that for $\kappa\in \mathbb{R},$
\begin{equation*}
    \sum_{\ell=0}^{\infty}{d_\ell} \lambda_\ell^{-\kappa}<\infty,
\end{equation*}if and only if $\kappa>n.$ This is a consequence of the fact that $L=\sqrt{1-\Delta_g}$ is an elliptic pseudo-differential operator of order $\nu=1,$ and of Proposition  5.3 in \cite[Page 779]{DelgadoRuzhanskyJAM2018}.
\end{proof}

\subsection{$L^p$-$L^q$-boundedness of Fourier integral operators with complex phases}\label{lp:lq}

In our further analysis we are going to apply the following equivalent form of the Sobolev embedding theorem, see e.g. \cite{Saloff-CosteBook}.
\begin{remark}[Hardy-Littlewood-Sobolev inequality]\label{HLS:ineq} Let $(X,g)$ be a compact Riemmanian manifold of dimension $n$.
   For any $s\in \mathbb{R},$ let us consider the Bessel potential $\sqrt{1-\Delta_g}^{-s}$ of order $-s.$ Then for $1<p\leq q<\infty,$ the operator 
$$ E=\sqrt{1-\Delta_g}^{-s}:L^p(X)\rightarrow L^q(X),$$ is bounded if $s\geq n \left( \frac{1}{p}-\frac{1}{q}\right)$. For the sharpness of the index $s_0=n \left( \frac{1}{p}-\frac{1}{q}\right)$ we refer to \cite{Saloff-CosteBook}.
\end{remark} 
In the next theorem we present the order conditions for the $L^p$-$L^q$-boundedness of Fourier integral operators from the classes $I^\mu_{\rho,1-\rho}(X,Y;C)$ by combining the methods of \cite[Pages 39-40]{Ruzhansky:CWI-book} and the approach of  H\"ormander \cite{Hor67}.

\begin{proposition} Let $X$ and $Y$ be compact manifolds of dimension $n.$ Let us consider  a Fourier integral operator  $T\in  I^\mu_{\rho,1-\rho}(X,Y;C) ,$ $\rho\in [1/2,1].$ Let us consider the canonical relation $C$ satisfying the complex factorisation condition of rank $k,$ where $0\leq k\leq n-1$.  Then $T$ is bounded from $L^p(Y)$ to $L^q(X)$ if the following order conditions hold:
\begin{itemize}
        \item[(i)] if $1<p\leq q \leq 2$ and  \begin{equation}\label{Necessary:condition:4}
   \mu\leq -n \left( \frac{1}{p}-\frac{1}{q}\right)   -({k+(n-k)(1-\rho)})|1/2-1/q|. \end{equation}    
\item[(ii)] If $2 \leq p \leq q<\infty$ and 

\begin{equation}\label{Necessary:condition:3}
   \mu \leq -n\left( \frac{1}{p}-\frac{1}{q}\right) -({k+(n-k)(1-\rho)})|1/2-1/p|.
   \end{equation}
 \item[(iii)] If  $1<p\leq 2\leq q<\infty$ and 
 \begin{equation}
      \mu \leq -n\left( \frac{1}{p}-\frac{1}{q}\right).
 \end{equation}
\end{itemize}  
\end{proposition}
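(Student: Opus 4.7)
My plan is to reduce each of the three cases to Theorem \ref{Ruzhansky} by factoring $T$ through appropriate Bessel potentials $L^{\pm s} := \sqrt{1-\Delta_g}^{\pm s}$ and invoking the Hardy--Littlewood--Sobolev embedding of Remark \ref{HLS:ineq}. The guiding principle, following \cite[Pages 39-40]{Ruzhansky:CWI-book} and \cite{Hor67}, is that composing $T \in I^{\mu}_{\rho,1-\rho}(X,Y;C)$ with $L^s$ on either the $X$- or the $Y$-side produces a Fourier integral operator in $I^{\mu+s}_{\rho,1-\rho}(X,Y;C)$ with the same canonical relation $C$. For real canonical relations this is classical; for the complex-phase setting it follows from the Melin--Sj\"ostrand composition calculus recalled in Subsection \ref{2:4}, under which a properly supported pseudo-differential factor leaves $C$ unchanged and shifts the order additively.

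For case (ii), where $2 \leq p \leq q < \infty$, I set $s := n(1/p - 1/q) \geq 0$ and write $T = L^{-s} \circ T'$ with $T' := L^s \circ T$ and $L^{\pm s}$ acting on $X$. By Remark \ref{HLS:ineq}, $L^{-s}: L^p(X) \to L^q(X)$ is bounded, while $T' \in I^{\mu+s}_{\rho,1-\rho}(X,Y;C)$, so Theorem \ref{Ruzhansky} yields $T': L^p(Y) \to L^p(X)$ as soon as $\mu + s \leq -(k + (n-k)(1-\rho))|1/2 - 1/p|$, which is \eqref{Necessary:condition:3}. Case (i) is symmetric: for $1 < p \leq q \leq 2$ I write $T = T'' \circ L^{-s}$ with $T'' := T \circ L^s$ and $L^{\pm s}$ acting on $Y$. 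Then $L^{-s}: L^p(Y) \to L^q(Y)$ by Remark \ref{HLS:ineq}, while $T'' \in I^{\mu+s}_{\rho,1-\rho}(X,Y;C)$ maps $L^q(Y) \to L^q(X)$ by Theorem \ref{Ruzhansky} once $\mu + s \leq -(k + (n-k)(1-\rho))|1/2 - 1/q|$, which is exactly \eqref{Necessary:condition:4}.

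Case (iii), with $1 < p \leq 2 \leq q < \infty$, calls for a two-sided factorisation $T = L^{-s_1} \circ T''' \circ L^{-s_2}$ where $s_1 := n(1/2 - 1/q)$ and $s_2 := n(1/p - 1/2)$ are both non-negative, and $T''' := L^{s_1} \circ T \circ L^{s_2}$ has order $\mu + s_1 + s_2 = \mu + n(1/p - 1/q)$. Remark \ref{HLS:ineq} supplies $L^{-s_2}: L^p(Y) \to L^2(Y)$ and $L^{-s_1}: L^2(X) \to L^q(X)$. At the endpoint $p = 2$ the loss term $(k + (n-k)(1-\rho))|1/2 - 1/2|$ in Theorem \ref{Ruzhansky} vanishes, so $T''': L^2(Y) \to L^2(X)$ is bounded as soon as $\mu + s_1 + s_2 \leq 0$, i.e.\ $\mu \leq -n(1/p - 1/q)$, which is the stated bound.

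The only non-routine ingredient is the order-additive invariance of $I^\mu_{\rho,1-\rho}(X,Y;C)$ under left and right composition by the Bessel potentials $L^{\pm s}$ in the complex-phase setting; once that has been extracted from the Melin--Sj\"ostrand calculus of Subsection \ref{2:4}, the three cases follow from the bookkeeping above with no further interpolation or decomposition arguments needed.
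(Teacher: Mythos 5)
Your proposal is correct and follows essentially the same route as the paper: in each of the three cases you factor $T$ through Bessel potentials on the appropriate side (on $Y$ for case (i), on $X$ for case (ii), on both for case (iii)), use the Hardy--Littlewood--Sobolev bound of Remark \ref{HLS:ineq} for the potential, and invoke Theorem \ref{Ruzhansky} for the rebalanced Fourier integral operator. The only cosmetic difference is notational ($L^{\pm s}$ in place of the paper's $E^{\pm 1}$, $F^{\pm 1}$); the factorisations, the choices of $s$, $s_1$, $s_2$, and the order bookkeeping coincide with the paper's argument.
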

\begin{proof} According to the position of $p$ and $q$ in the real line, we divide our proof in three cases:
\begin{itemize} 
    \item first, consider $p$ and $q$ with $1<p\leq q \leq 2.$ Consider the Bessel potential on $(Y,g)$ defined via
$$ E=\sqrt{1-\Delta_g}^{-n \left( \frac{1}{p}-\frac{1}{q}\right)}:L^p(Y)\rightarrow L^q(Y),$$ where $g=g_Y$ is the metric on $Y,$ see Remark \ref{HLS:ineq}.  We factorise $T$ with order 
$$ \mu\leq -n \left( \frac{1}{p}-\frac{1}{q}\right)   -({k+(n-k)(1-\rho)})|1/2-1/q|$$
on the right as follows
\begin{equation}
    T=TE^{-1} E.
\end{equation} Note that $TE^{-1}$ is a Fourier integral operator of order 
$$\mu+n \left( \frac{1}{p}-\frac{1}{q}\right) \leq  -({k+(n-k)(1-\rho)})|1/2-1/q|.$$
In consequence $TE^{-1}: L^q(Y)\rightarrow L^q(X)$ is bounded. Then $T:L^p(M)\rightarrow L^q(M)$ is bounded, see Theorem \ref{Ruzhansky}.
\item Similarly, for $p$ and $q$ with  $2 \leq p \leq q<\infty$ and $T$ with order 
$$ \mu\leq -n \left( \frac{1}{p}-\frac{1}{q}\right)   -({k+(n-k)(1-\rho)})|1/2-1/p|,$$
we consider the Bessel potential 
$$ E=\sqrt{1-\Delta_g}^{-n \left( \frac{1}{p}-\frac{1}{q}\right)}:L^p(X)\rightarrow L^q(X),$$ where $g=g_X$ is the metric in $(X,g),$  see Remark \ref{HLS:ineq}. Now, we factorise $T$ on the left as follows
\begin{equation}
    T=E E^{-1}T .
\end{equation} Note that the order of the Fourier integral operator $E^{-1}T$ is 
$$\mu+n \left( \frac{1}{p}-\frac{1}{q}\right) \leq  -({k+(n-k)(1-\rho)})|1/2-1/p|.$$
Hence, $E^{-1}T:L^{p}(Y)\rightarrow L^{p}(X)$ is bounded, see Theorem \ref{Ruzhansky}.  Then $T:L^p(Y)\rightarrow L^q(X)$ is bounded as claimed.
\item  Let us consider $p,q$ satisfying   $1<p\leq 2\leq q<\infty$ and let $T$ to have the order
 \begin{equation}
      \mu \leq -n\left( \frac{1}{p}-\frac{1}{q}\right).
 \end{equation} This case is dramatically different compared with the previous ones. Indeed, consider the Bessel potential on $(Y,g)$ defined via 
$$ E=\sqrt{1-\Delta_{g_Y}}^{-n \left( \frac{1}{p}-\frac{1}{2}\right)}:L^p(Y)\rightarrow L^2(Y),$$ where $g=g_Y$ is the metric on $Y.$ Note that $TE^{-1}$ has order 
\begin{equation}
    \mu+n \left( \frac{1}{p}-\frac{1}{2}\right) \leq -n\left( \frac{1}{p}-\frac{1}{q}\right)+n \left( \frac{1}{p}-\frac{1}{2}\right)= -n\left( \frac{1}{2}-\frac{1}{q}\right).
\end{equation} Now, let us increase the order of this operator by composing it with $F^{-1}$ on the left where 
$$ F=\sqrt{1-\Delta_{g_X}}^{-n \left( \frac{1}{2}-\frac{1}{q}\right)}:L^2(X)\rightarrow L^q(X).$$ The Fourier integral operator $TE^{-1}F^{-1}$ has order zero. Consequently
\begin{equation}
    F^{-1}TE^{-1}:L^2(Y)\rightarrow L^{2}(X),
\end{equation}is bounded. Now, we have the factorisation 
$T=FF^{-1}T E^{-1}E,$ from which we deduce that $T:L^p(Y)\rightarrow L^q(X)$ is bounded as claimed. 
\end{itemize}   The proof is complete. 
\end{proof}

\subsection{Nuclearity of Fourier integral operators with complex phases}\label{Main:nuclearity:section}
In the next theorem we provide a criterion for the $r$-nuclearity of a $L^{p_1}$-$L^{q_{2}}$-bounded Fourier integral operator $T.$ Our strategy is the following. First, we identify the conditions to be satisfied by the order $-s$ of a Bessel potential $E=\sqrt{1-\Delta_g}^{-s}$ in order to have its $r$-nuclearity from $L^{p_1}$ into $L^{p_2},$ and then because of the factorisation $T=TE^{-1}E,$ we identify the order condition on the operator $TE^{-1}$ in order to have its continuity from  $L^{p_2}$ to $L^{q_2}.$ By the structure of these estimates, it is usually required that $p_2\leq q_2$ and then we introduce a parameter $\varkappa\geq 0,$ with
$$q_2=p_2+\varkappa.$$ One reason for this analysis is that in this way we can exploit the $L^{p_2}\rightarrow L^{p_2+\varkappa}$ boundedness of  Fourier integral operators, which under certain cases, e.g. when $p_2\leq 2\leq p_2+\varkappa,$ the $L^{p_2}\rightarrow L^{p_2+\varkappa}$-continuity of a Fourier integral operator has a pseudo-differential behaviour (i.e. with orders that do not depend of the parameter $k,$  and then become independent of the complex factorisation condition).  The independence of the complex factorisation condition in the case $L^{p_2}\rightarrow L^{p_2+\varkappa}$  where $p_2\leq 2\leq p_2+\varkappa,$ is also observed at the level of the $r$-nuclearity of Fourier integral operators.

\begin{theorem}\label{r:nuclearity:FIO} Let $\varkappa\geq 0$ and let $1\leq p_1<  \infty$ and $1<p_2<\infty.$
Let $X$ and $Y$ be compact manifolds of dimension $n.$ Let us assume that $C\subset \widetilde{T^{*}X\times T^{*}Y }$ is a complex canonical relation satisfying the complex factorisation condition of order $k,$ where $0\leq k\leq n-1.$  Let us consider a Fourier integral operator  $T\in  I^\mu_{\rho,1-\rho}(X,Y;C) ,$ where $\rho\in [1/2,1].$ Then $T:L^{p_1}(Y)\rightarrow L^{p_2+\varkappa}(X)$ is $r$-nuclear if the following conditions hold: 
\begin{itemize}
    \item $1<p_2;$ $p_2+\varkappa\leq 2$ and 
    $$ \mu <  -\frac{n\varkappa}{p_2(p_2+\varkappa)}   -({k+(n-k)(1-\rho)})\left|\frac{1}{2}-\frac{1}{p_2+\varkappa}\right|-\frac{n}{r}-\varphi(p_2)-\varphi(p_1').$$
    \item $2\leq p_2$ and 
    $$ \mu <  -\frac{n\varkappa}{p_2(p_2+\varkappa)}   -({k+(n-k)(1-\rho)})\left|\frac{1}{2}-\frac{1}{p_2}\right|-\frac{n}{r}-\varphi(p_2)-\varphi(p_1').$$
    \item $1<p_2\leq 2\leq p_2+\varkappa$ and
        $$ \mu <  -\frac{n\varkappa}{p_2(p_2+\varkappa)}   -\frac{n}{r}-\varphi(p_2)-\varphi(p_1').$$
\end{itemize} 
\end{theorem}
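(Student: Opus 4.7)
The strategy is exactly the factorisation $T=(TE^{-1})E$ outlined in the paragraph preceding the statement, combining the $r$-nuclearity of the Bessel potential (Proposition \ref{Propo:1}) with the $L^{p_2}\to L^{p_2+\varkappa}$-continuity of FIOs (Proposition in Subsection \ref{lp:lq}), and then invoking the ideal property of the class of $r$-nuclear operators.

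\textbf{Step 1 (choice of the Bessel potential).} On $(Y,g_Y)$ I introduce $E:=\sqrt{1-\Delta_{g_Y}}^{-s}$ and choose $s>\frac{n}{r}+\varphi(p_2)+\varphi(p_1')$. By Proposition \ref{Propo:1}, $E:L^{p_1}(Y)\to L^{p_2}(Y)$ is $r$-nuclear.
\textbf{Step 2 (perturbed FIO).} Since $E^{-1}=\sqrt{1-\Delta_{g_Y}}^{s}$ is an elliptic pseudo-differential operator of order $s$ on $Y$, the composition rule between FIOs and pseudo-differential operators (see Subsections \ref{2:2}--\ref{2:4}) yields $TE^{-1}\in I^{\mu+s}_{\rho,1-\rho}(X,Y;C)$, with the same complex canonical relation $C$ satisfying the complex factorisation condition of rank $k$.

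\textbf{Step 3 (boundedness of $TE^{-1}$).} I then apply to $TE^{-1}$ the $L^{p_2}\to L^{p_2+\varkappa}$-continuity result of Subsection \ref{lp:lq}. According to which of the three configurations $\{1<p_2\leq p_2+\varkappa\leq 2\}$, $\{2\leq p_2\leq p_2+\varkappa\}$ or $\{1<p_2\leq 2\leq p_2+\varkappa\}$ holds, this furnishes the sufficient condition
\[
\mu+s\leq -\tfrac{n\varkappa}{p_2(p_2+\varkappa)}-\bigl(k+(n-k)(1-\rho)\bigr)\Theta,
\]
where $\Theta=|1/2-1/(p_2+\varkappa)|$, $\Theta=|1/2-1/p_2|$, or $\Theta=0$ respectively (the last being the pseudo-differential-type situation where the factorisation order $k$ drops out).

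\textbf{Step 4 (ideal property and bookkeeping).} The ideal property of the class of $r$-nuclear operators (composition of a bounded operator with an $r$-nuclear operator is $r$-nuclear) applied to the factorisation $T=(TE^{-1})\circ E$ with $E:L^{p_1}(Y)\to L^{p_2}(Y)$ $r$-nuclear and $TE^{-1}:L^{p_2}(Y)\to L^{p_2+\varkappa}(X)$ bounded gives that $T:L^{p_1}(Y)\to L^{p_2+\varkappa}(X)$ is $r$-nuclear. The strict inequality on $\mu$ in the statement is precisely what allows the two conditions on $s$ (namely $s>\tfrac{n}{r}+\varphi(p_2)+\varphi(p_1')$ from Step 1 and $s\leq -\mu-\tfrac{n\varkappa}{p_2(p_2+\varkappa)}-(k+(n-k)(1-\rho))\Theta$ from Step 3) to be simultaneously satisfied for some $s$.

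\textbf{Main obstacle.} There is no deep analytic obstruction; each ingredient has been set up earlier in the manuscript. The only technical point to check with care is that the composition $TE^{-1}$ stays inside the FIO class $I^{\mu+s}_{\rho,1-\rho}(X,Y;C)$ (with unchanged complex canonical relation and unchanged factorisation rank $k$), so that the continuity result of Subsection \ref{lp:lq} applies verbatim and the three cases of that proposition feed cleanly into the three cases of the present theorem.
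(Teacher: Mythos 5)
Your proposal is correct and follows essentially the same route as the paper: the same factorisation $T=(TE^{-1})\circ E$, the same use of Proposition \ref{Propo:1} for the $r$-nuclearity of the Bessel potential, the same $L^{p_2}\to L^{p_2+\varkappa}$ continuity result for $TE^{-1}$ from Subsection \ref{lp:lq}, and the same invocation of the ideal property of $r$-nuclear operators. The only cosmetic difference is that the paper fixes $s$ by introducing an $\varepsilon(\varkappa)$ so that $\mu+\varepsilon=\mu(\varkappa)-\frac{n}{r}-\varphi(p_2)-\varphi(p_1')$ and then setting $s=\frac{n}{r}+\varphi(p_2)+\varphi(p_1')+\varepsilon$, whereas you argue that the strict inequality on $\mu$ guarantees the existence of some admissible $s$; these are equivalent.
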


\begin{proof} First, let us define the function $\mu(\varkappa)$ as follows:
$$\mu(\varkappa)= \left\{ \begin{array}{lcc}  -\frac{n\varkappa}{p_2(p_2+\varkappa)}   -({k+(n-k)(1-\rho)})\left|\frac{1}{2}-\frac{1}{p_2+\varkappa}\right|, & \textnormal{if}  & p_2+\varkappa\leq  2 \\ 
\\  -\frac{n\varkappa}{p_2(p_2+\varkappa)}   -({k+(n-k)(1-\rho)})\left|\frac{1}{2}-\frac{1}{p_2}\right| & \textnormal{if} & 2 \leq p_2 \\ 
\\ -\frac{n\varkappa}{p_2(p_2+\varkappa)}  &\textnormal{if} & p_2\leq 2\leq p_2+\varkappa. \end{array} \right. $$
Note that 
\begin{equation}
    \frac{1}{p_2}-\frac{1}{p_2+\varkappa}=\frac{\varkappa}{p_2(p_2+\varkappa)}.
\end{equation}
In consequence, 
    there is $\varepsilon=\varepsilon(\varkappa)>
   0 $ such that
   \begin{equation}
        \mu+\varepsilon= \mu(\varkappa)-\frac{n}{r}-\varphi(p_2)-\varphi(p_1').
   \end{equation} Let us define $$s:= \frac{n}{r}+\varphi(p_2)+\varphi(p_1')+\varepsilon.$$ In view of Proposition \ref{Propo:1}, $E=\sqrt{1-\Delta_g}^{-s}:L^{p_1}(Y)\rightarrow L^{p_2}(Y)$ is $r$-nuclear. Note that we can write $$T=TE^{-1} E$$ with $TE^{-1}$ being an operator of order
   $$ \mu+s =\mu(\varkappa). $$ It follows from Theorem \ref{Ruzhansky} that $TE^{-1}:L^{p_2}(Y)\rightarrow L^{p_2+\varkappa}(X)$ is bounded. In view of the ideal property of the class of $r$-nuclear operators we have that $$T:L^{p_1}(Y)\rightarrow L^{p_2+\varkappa}(X)$$ is $r$-nuclear. 
\end{proof}
Now, motivated by the problem of the distribution of eigenvalues of Fourier integral operators, we consider in Theorem \ref{r:nuclearity:FIO} the case where $p_1=p_2+\varkappa.$ Note that its conjugate exponent is given by 
$$p_1'=\frac{p_2+\varkappa}{p_2+\varkappa-1}.$$ To simplify the notation we drop the sub-index $2$ in $p_2,$ and we present in the next result the $r$-nuclearity conditions for bounded Fourier integral operators on $L^{p+\varkappa}(X).$
\begin{corollary}\label{order:condotions:corollary}
    Let $\varkappa\geq 0$ and let  $1<p<\infty.$
Let $X$ and $Y$ be compact manifolds of dimension $n.$ Let us assume that $C\subset \widetilde{T^{*}X\times T^{*}Y }$ is a complex canonical relation satisfying the complex factorisation condition of order $k,$ where $0\leq k\leq n-1.$ Let us consider a Fourier integral operator  $T\in  I^\mu_{\rho,1-\rho}(X,Y;C) ,$ where $\rho\in [1/2,1].$ Then $T:L^{p+\varkappa}(Y)\rightarrow L^{p+\varkappa}(X)$ is $r$-nuclear if the following conditions hold: 
\begin{itemize}
    \item  $p+\varkappa\leq 2$ and 
    $$ \mu <  \mu(\varkappa)-\frac{n}{r}-\varphi(p)-\varphi\left(\frac{p+\varkappa}{p+\varkappa-1}\right).$$
    \item $2\leq p$ and 
    $$ \mu <  \mu(\varkappa)-\frac{n}{r}-\varphi(p)-\varphi\left(\frac{p+\varkappa}{p+\varkappa-1}\right).$$
    \item $1<p\leq 2\leq p+\varkappa$ and
        $$ \mu < \mu(\varkappa)   -\frac{n}{r}-\varphi(p)-\varphi\left(\frac{p+\varkappa}{p+\varkappa-1}\right),$$
\end{itemize} where we have denoted
\begin{equation}
\mu(\varkappa)= \left\{ \begin{array}{lcc}  -\frac{n\varkappa}{p(p+\varkappa)}   -({k+(n-k)(1-\rho)})\left|\frac{1}{2}-\frac{1}{p+\varkappa}\right|, & \textnormal{if}  & p+\varkappa\leq  2 \\ 
\\  -\frac{n\varkappa}{p(p+\varkappa)}   -({k+(n-k)(1-\rho)})\left|\frac{1}{2}-\frac{1}{p}\right| & \textnormal{if} & 2 \leq p \\ 
\\ -\frac{n\varkappa}{p(p+\varkappa)}  &\textnormal{if} & p\leq 2\leq p+\varkappa. \end{array} \right.
\end{equation}
\end{corollary}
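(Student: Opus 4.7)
The plan is to derive Corollary \ref{order:condotions:corollary} as a direct specialisation of Theorem \ref{r:nuclearity:FIO}, obtained by setting $p_1 := p+\varkappa$ and $p_2 := p$, so that the operator acts on a single Lebesgue space $L^{p+\varkappa}(X)$. No new analytic input is required; the corollary is essentially a bookkeeping consequence of the theorem, obtained by matching the three regimes for $(p_2, p_2+\varkappa)$ in Theorem \ref{r:nuclearity:FIO} to the three regimes for $(p, p+\varkappa)$ in the corollary.

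First I would check that the ranges are compatible: the hypotheses $1<p_2<\infty$ and $1\le p_1<\infty$ of Theorem \ref{r:nuclearity:FIO} translate, under the identification $p_2=p$ and $p_1=p+\varkappa$, to $1<p<\infty$ and $\varkappa\ge 0$, which is exactly the setting of the corollary. Next, I would compute the conjugate exponent $p_1'$ that enters the Sogge term $\varphi(p_1')$ in Theorem \ref{r:nuclearity:FIO}. Since $p_1=p+\varkappa$, one has
\begin{equation*}
p_1' \;=\; \frac{p+\varkappa}{p+\varkappa-1},
\end{equation*}
and therefore $\varphi(p_1')$ becomes $\varphi\!\left(\dfrac{p+\varkappa}{p+\varkappa-1}\right)$, which is precisely the quantity appearing on the right-hand side of each of the three inequalities in the corollary statement.

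Then I would go through the three cases in turn. The quantity $\mu(\varkappa)$ defined in the proof of Theorem \ref{r:nuclearity:FIO} depends only on $p_2$ and $\varkappa$ (and on the geometric data $k$, $n$, $\rho$), not on $p_1$; in our substitution $p_2=p$, so the piecewise definition of $\mu(\varkappa)$ in Theorem \ref{r:nuclearity:FIO} carries over verbatim to the piecewise definition of $\mu(\varkappa)$ in the corollary, with the three regimes $p+\varkappa\le 2$, $2\le p$, and $p\le 2\le p+\varkappa$ matching those of the theorem. Substituting $\varphi(p_1')=\varphi\!\left(\tfrac{p+\varkappa}{p+\varkappa-1}\right)$ and $\varphi(p_2)=\varphi(p)$ into the conclusion of Theorem \ref{r:nuclearity:FIO} then yields, in each case, the corresponding order condition in the corollary, and the $r$-nuclearity of $T:L^{p+\varkappa}(Y)\to L^{p+\varkappa}(X)$ follows.

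The only step requiring any care—and it is more notational than technical—is the case $1<p\le 2\le p+\varkappa$, where in Theorem \ref{r:nuclearity:FIO} the factor $k+(n-k)(1-\rho)$ disappears from $\mu(\varkappa)$ because the $L^{p_2}\to L^{p_2+\varkappa}$-boundedness of $TE^{-1}$ reduces to a pseudo-differential-type estimate; I would just note that this same mechanism applies directly after our substitution, so that $\mu(\varkappa)=-n\varkappa/(p(p+\varkappa))$ in this range, as claimed. With the three order conditions verified, the corollary follows from the ideal property of $r$-nuclear operators via the factorisation $T=(TE^{-1})\,E$ used inside the proof of Theorem \ref{r:nuclearity:FIO}.
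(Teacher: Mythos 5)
Your proposal is correct and follows exactly the route the paper takes: the corollary is stated as a direct specialisation of Theorem \ref{r:nuclearity:FIO} under the substitution $p_1 = p_2+\varkappa$, with the subscript on $p_2$ then dropped, and you correctly compute $p_1' = \tfrac{p+\varkappa}{p+\varkappa-1}$ and match the three regimes and the piecewise $\mu(\varkappa)$ verbatim. No further commentary is needed.
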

Now, observing that 
\begin{itemize}
    \item with $p=2$ and $\varkappa=0,$ namely, in the case of the $r$-nuclear operators on $L^2(X),$ we are in the Hilbert space setting, and the class of $r$-nuclear operators on $L^2(X),$ for $0<r\leq 1,$ agrees with the Schatten-von Neumann class of order $r,$ $S_r(L^2(X)),$
    \item and that $\mu(0)=0$ for $p=2,$ and also that $\varphi(2)=0.$
\end{itemize} Theorem \ref{r:nuclearity:FIO} has the following consequence. 
\begin{corollary}\label{cor:schatten}
    Let $0<r\leq 1.$
Let $X$ be a compact manifold of dimension $n.$  Let us consider a Fourier integral operator  $T\in  I^\mu_{\rho,1-\rho}(X;C) ,$ where $\rho\in [1/2,1].$ Then $T:L^{2}(X)\rightarrow L^{2}(X)$ belong to the Schatten class of order $r,$ $S_r(L^2(X)),$ provided that $\mu<-n/r.$ 
\end{corollary}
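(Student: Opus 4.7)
The plan is to obtain this statement as a direct specialization of Theorem \ref{r:nuclearity:FIO} (equivalently, Corollary \ref{order:condotions:corollary}) to the Hilbert-space diagonal setting $p_1=p_2=2$, $\varkappa=0$, combined with Oloff's identification of $r$-nuclear operators on $L^2$ with the Schatten class $S_r(L^2(X))$ for $0<r\leq 1$ recalled in Subsection~\ref{Nuclearity:section}.

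First, I would apply Theorem \ref{r:nuclearity:FIO} with the choices $p_1=2$, $p_2=2$ and $\varkappa=0$, so that $T$ is viewed as an operator $L^2(X)\to L^2(X)$. The conjugate index is $p_1'=2$, and Sogge's function from \eqref{Sogge:varphi:2} satisfies $\varphi(2)=0$, so the terms $\varphi(p_2)$ and $\varphi(p_1')$ both vanish. Moreover, $\varkappa=0$ kills the first summand $-\tfrac{n\varkappa}{p_2(p_2+\varkappa)}=0$ in $\mu(\varkappa)$, and the factor $\bigl|\tfrac{1}{2}-\tfrac{1}{p_2}\bigr|=0$ kills the factorisation term $-(k+(n-k)(1-\rho))\bigl|\tfrac{1}{2}-\tfrac{1}{p_2}\bigr|=0$. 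Hence $\mu(0)=0$, and the hypothesis of Theorem~\ref{r:nuclearity:FIO} reduces to
\begin{equation*}
\mu < \mu(0)-\tfrac{n}{r}-\varphi(2)-\varphi(2) = -\tfrac{n}{r}.
\end{equation*}
This is exactly the hypothesis of the corollary, and the conclusion of Theorem~\ref{r:nuclearity:FIO} is that $T:L^2(X)\to L^2(X)$ is $r$-nuclear in the sense of Grothendieck.

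To finish, I would invoke Oloff's theorem \cite{Oloff1970} as recalled in Subsection~\ref{Nuclearity:section}: on a Hilbert space, and in particular on $L^2(X)$, the class of $r$-nuclear operators coincides with the Schatten--von Neumann class $S_r(L^2(X))$ for every $0<r\leq 1$. Hence the $r$-nuclearity of $T$ on $L^2(X)$ obtained in the previous step implies $T\in S_r(L^2(X))$, as claimed.

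There is no real obstacle beyond correctly tracking that the three loss terms $\mu(\varkappa)$, $\varphi(p_2)$ and $\varphi(p_1')$ all collapse in the Hilbert diagonal case; once this is verified the result is a purely formal consequence of Theorem~\ref{r:nuclearity:FIO} together with Oloff's identification. Note that the argument is insensitive to the type parameter $\rho\in[1/2,1]$ and to the rank $k$ of the complex factorisation condition, which reflects the fact that at the $L^2$ level Fourier integral operators behave like pseudo-differential operators of the same order, as already observed through the $T^*T$ method in the introduction.
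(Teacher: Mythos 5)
Your proposal is correct and matches the paper's own derivation: the paper obtains Corollary~\ref{cor:schatten} precisely by noting, in the remark immediately preceding it, that at $p=2$, $\varkappa=0$ one has $\mu(0)=0$ and $\varphi(2)=0$, so Theorem~\ref{r:nuclearity:FIO} (equivalently Corollary~\ref{order:condotions:corollary}) gives $r$-nuclearity on $L^2(X)$ under $\mu<-n/r$, and then Oloff's identification of $r$-nuclear operators on a Hilbert space with $S_r$ for $0<r\leq 1$ finishes the argument. You have simply spelled out the bookkeeping that the paper leaves implicit.
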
 A natural question is to determine also the order $\mu$ of a Fourier integral operator in order to guarantee its membership to the ideal $ S_r(L^2(X)),$ in the case $r>1.$ This question is addressed in Proposition \ref{Propo:Schatten}, where we extend Corollary \ref{cor:schatten} to the case $r>1$ giving also an alternative proof for Corollary \ref{cor:schatten} in the range $0<r\leq 1.$  

\subsection{Distribution of eigenvalues of Fourier integral operators on $L^p$-spaces}
In the previous subsection, we have provided order criteria to guarantee the $r$-nuclearity of Fourier integral operators $T:L^{p+\varkappa}(X)\rightarrow L^{p+\varkappa}(X)$. For those operators we have the discrete representation 
\begin{equation}\label{nuc:2T}
Tf=\sum_{j\in\mathbb{N}_0} e_j'(f)y_j,\,\,\, \textnormal{ with }\,\,\,\sum_{j\in\mathbb{N}_0} \Vert e_j' \Vert^r_{E'}\Vert y_j \Vert^r_{E}<\infty, \quad E=L^{p+\varkappa}(X),
\end{equation} from some sequences $\{e_j'\}$ and $\{y_j\}.$ In this case the nuclear trace of the operator $T,$ see \eqref{Trace:nuclear}, is given by
$$\textnormal{Tr}(T)=\sum_{j\in \mathbb{N}_0}e_j'(y_j).$$ 
\begin{remark}\label{ordered:dec}
  According to the notation established above, in what follows the sequence of eigenvalues of $T$ is denoted by $\{\lambda_j(T)\}_{j\in \mathbb{N}_0}.$
Since $T$ is a compact operator, we can assume that the sequence
$\{\vert \lambda_j(T)\vert\}_{j\in \mathbb{N}_0},$ is ordered in non-increasing form
\begin{equation}
    \vert \lambda_0(T)\vert\geq \vert \lambda_1(T)\vert \geq \vert \lambda_2(T)\vert\geq \cdots\geq  \vert \lambda_j(T)\vert\geq  \cdots \geq 0.
\end{equation}  We are going to use this fact in our analysis of the distribution of these eigenvalues.
\end{remark}
In the next Theorem \ref{dist:1}  we provide a rate of decay for the eigenvalues of $r$-nuclear Fourier integral operators. To simplify the notation we are going to use the piecewise  function
\begin{equation}\label{mu:varkappa}
\mu(\varkappa)= \left\{ \begin{array}{lcc}  -\frac{n\varkappa}{p(p+\varkappa)}   -({k+(n-k)(1-\rho)})\left|\frac{1}{2}-\frac{1}{p+\varkappa}\right|, & \textnormal{if}  & p+\varkappa\leq  2 \\ 
\\  -\frac{n\varkappa}{p(p+\varkappa)}   -({k+(n-k)(1-\rho)})\left|\frac{1}{2}-\frac{1}{p}\right| & \textnormal{if} & 2 \leq p \\ 
\\ -\frac{n\varkappa}{p(p+\varkappa)}  &\textnormal{if} & p\leq 2\leq p+\varkappa. \end{array} \right.
\end{equation} The validity of the Grotendieck-Lidskii formula for Fourier integral operators is presented in the following result.
\begin{theorem}\label{dist:1}
    Let $\varkappa\geq 0$ and let  $1<p<\infty.$
Let $X$ be a compact manifold of dimension $n.$ Let us assume that $C\subset \widetilde{T^{*}X\times T^{*}X}$ is a complex canonical relation satisfying the complex factorisation condition of order $k,$ where $0\leq k\leq n-1.$  Let us consider a Fourier integral operator  $T\in  I^\mu_{\rho,1-\rho}(X;C) ,$ where $\rho\in [1/2,1].$  Assume that the order $\mu$ satisfies at least one of the following conditions:
\begin{itemize}
    \item  $p+\varkappa\leq 2$ and 
    $$ \mu <  \mu(\varkappa)-n\left(\frac{1}{2}+\frac{1}{p+\varkappa}\right)-\varphi(p)-\varphi\left(\frac{p+\varkappa}{p+\varkappa-1}\right).$$
    \item $2\leq p$ and 
    $$ \mu <  \mu(\varkappa)-n\left(\frac{3}{2}-\frac{1}{p+\varkappa}\right)-\varphi(p)-\varphi\left(\frac{p+\varkappa}{p+\varkappa-1}\right).$$
    \item $1<p\leq 2\leq p+\varkappa$ and
        $$ \mu < \mu(\varkappa)   -n\left(\frac{3}{2}-\frac{1}{p+\varkappa}\right)-\varphi(p)-\varphi\left(\frac{p+\varkappa}{p+\varkappa-1}\right),$$
\end{itemize} with $\mu(\varkappa)$ as in \eqref{mu:varkappa}. Then $T:L^{p+\varkappa}(X)\rightarrow L^{p+\varkappa}(X)$ is $r$-nuclear with 
$$\frac{1}{r}=1+\left|\frac{1}{2}-\frac{1}{p+\varkappa}\right|$$ and 
\begin{equation}\label{equality:trace}
\textnormal{Tr}(T)=\sum_{j\in\mathbb{N}_0}\lambda_j(T)
\end{equation}
where $\lambda_j(T),$ $j\in\mathbb{N},$ is the sequence of eigenvalues of $T$ with multiplicities taken into account. 
\end{theorem}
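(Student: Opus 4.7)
The plan is to derive Theorem \ref{dist:1} as a direct combination of two ingredients already available in the paper: the $r$-nuclearity criterion of Corollary \ref{order:condotions:corollary}, and the Reinov--Latif extension of the Grothendieck--Lidskii formula recorded as Theorem \ref{Grothendieck:Lidskii:1}. The key observation is that the order conditions displayed in the three bullets of the present statement are precisely those of Corollary \ref{order:condotions:corollary} once one specialises $r$ to the particular value dictated by the trace formula, namely
$$\frac{1}{r}=1+\left|\frac{1}{2}-\frac{1}{p+\varkappa}\right|.$$
Since $|\tfrac{1}{2}-\tfrac{1}{p+\varkappa}|\geq 0,$ this choice automatically places $r$ in the range $(0,1],$ so Corollary \ref{order:condotions:corollary} is applicable.

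The next step is to verify, case by case, that $-n/r$ computed with the above formula matches the constants $-n(\tfrac{1}{2}+\tfrac{1}{p+\varkappa})$ and $-n(\tfrac{3}{2}-\tfrac{1}{p+\varkappa})$ appearing in the hypotheses. If $p+\varkappa\leq 2,$ then $\tfrac{1}{p+\varkappa}\geq \tfrac{1}{2}$ and one has $\tfrac{1}{r}=\tfrac{1}{2}+\tfrac{1}{p+\varkappa},$ which reproduces the first bullet. In the two remaining regimes $2\leq p$ and $1<p\leq 2\leq p+\varkappa,$ one has $\tfrac{1}{p+\varkappa}\leq \tfrac{1}{2},$ hence $\tfrac{1}{r}=\tfrac{3}{2}-\tfrac{1}{p+\varkappa},$ which matches the second and third bullets, respectively. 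Consequently, in each case Corollary \ref{order:condotions:corollary} yields the $r$-nuclearity of $T:L^{p+\varkappa}(X)\rightarrow L^{p+\varkappa}(X)$ with the indicated value of $r.$

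To conclude, I would invoke Theorem \ref{Grothendieck:Lidskii:1} (with its variable $p$ replaced by $p+\varkappa$): the value of $r$ was chosen precisely so that the compatibility relation in that theorem is satisfied, and the conclusion gives the identity \eqref{equality:trace}, where the sequence $\{\lambda_j(T)\}_{j\in \mathbb{N}_0}$ is ordered as in Remark \ref{ordered:dec} with multiplicities counted. There is no substantial analytic obstacle in this argument; the only step that requires care is the piecewise bookkeeping showing that each of the three regimes in the hypothesis corresponds to the matching regime of $\mu(\varkappa)$ in Corollary \ref{order:condotions:corollary}, together with the verification that the additional $-n/r$ contribution is distributed correctly. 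The nontrivial content of the theorem is therefore borrowed entirely from the Reinov--Latif theorem and the $r$-nuclearity criterion already established above.
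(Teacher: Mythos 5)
Your proposal is correct and follows essentially the same route as the paper's proof: both arguments fix $r$ by $\tfrac{1}{r}=1+|\tfrac12-\tfrac{1}{p+\varkappa}|$, verify case by case that the constants $-n(\tfrac12+\tfrac{1}{p+\varkappa})$ and $-n(\tfrac32-\tfrac{1}{p+\varkappa})$ are exactly $-n/r$ in the corresponding regimes, then invoke Corollary \ref{order:condotions:corollary} for $r$-nuclearity and Theorem \ref{Grothendieck:Lidskii:1} for the trace identity.
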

\begin{proof} Note that
\begin{itemize}
    \item if $p+\varkappa\leq 2,$
$$-n\left(1+\left|\frac{1}{2}-\frac{1}{p+\varkappa}\right|\right)=-n\left(\frac{1}{2}+\frac{1}{p+\varkappa}\right).$$ 
\item If $p\geq 2,$ or if $p+\varkappa\geq 2,$ we have that 
$$-n\left(1+\left|\frac{1}{2}-\frac{1}{p+\varkappa}\right|\right)=-n\left(\frac{3}{2}-\frac{1}{p+\varkappa}\right).$$     
\end{itemize} In consequence, if we define $r\in (0,1],$ by the identity
\begin{equation*}
    \frac{1}{r}=1+\left|\frac{1}{2}-\frac{1}{p+\varkappa}\right|,
\end{equation*} we have that: 
\begin{itemize}
    \item for $p+\varkappa\leq 2,$
    $$ \mu <  \mu(\varkappa)-\frac{n}{r}-\varphi(p)-\varphi\left(\frac{p+\varkappa}{p+\varkappa-1}\right).$$
    \item With $p\geq 2,$ we have that 
    $$ \mu <  \mu(\varkappa)-\frac{n}{r}-\varphi(p)-\varphi\left(\frac{p+\varkappa}{p+\varkappa-1}\right).$$
    \item For $1<p\leq 2\leq p+\varkappa,$ we have the order inequality
        $$ \mu < \mu(\varkappa)   -\frac{n}{r}-\varphi(p)-\varphi\left(\frac{p+\varkappa}{p+\varkappa-1}\right).$$
\end{itemize}In consequence, Corollary \ref{order:condotions:corollary} implies that $T:L^{p+\varkappa}(Y)\rightarrow L^{p+\varkappa}(X)$ is $r$-nuclear. On the other hand, by applying Theorem \ref{Grothendieck:Lidskii:1} we deduce the validity of the Grothedieck-Lidskii formula \eqref{equality:trace}.
\end{proof} Having investigated the convergence of the sums of eigenvalues in the previous result, we present in the next theorem a criterion for the rate of decay of eigenvalues.

\begin{theorem}\label{dist:2}
    Let $\varkappa\geq 0,$ $0<r\leq 1,$ and let  $1<p<\infty.$
Let $X$ be a compact manifold of dimension $n.$ Let us assume that $C\subset \widetilde{T^{*}X\times T^{*}X }$ is a complex canonical relation satisfying the complex factorisation condition of order $k,$ where $0\leq k\leq n-1.$  Let us consider a Fourier integral operator  $T\in  I^\mu_{\rho,1-\rho}(X;C) ,$ where $\rho\in [1/2,1].$  Assume that the order $\mu$ satisfies at least one of the hypotheses in Corollary \ref{order:condotions:corollary} assuring that $T:L^{p+\varkappa}(X)\rightarrow L^{p+\varkappa}(X)$ is $r$-nuclear. Let  $\lambda_n(T),$ $n\in\mathbb{N},$ be the sequence of eigenvalues of $T$ with multiplicities taken into account.  Then, 
\begin{equation}
\Vert \{\lambda_{j}(T)\}_{j\in \mathbb{N}_0}\Vert_{\ell^q}=\left(\sum_{j\in\mathbb{N}_0}|\lambda_j(T)|^{q}\right)^{\frac{1}{q}}\leq C_pn_r(T),
\end{equation} that is, the sequence $\lambda_n(T),$ $n\in\mathbb{N},$ is $q$-sumable, where $q$ satisfying the inequality $0<q\leq 2,$ is given by 
\begin{equation}
    q=\frac{1}{\frac{1}{r}-\left|\frac{1}{2}-\frac{1}{p+\varkappa}\right|}.
\end{equation} Moreover,  we have the rate of decay
\begin{equation}
    |\lambda_j(T)|=O(j^{ -\frac{1}{r}+\left|\frac{1}{2}-\frac{1}{p+\varkappa}\right|}).
\end{equation}
\end{theorem}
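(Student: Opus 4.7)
The plan is to reduce the theorem to a direct application of the Reinov-Latif summability result, Theorem \ref{Reinov2:q:summability}, once the $r$-nuclearity of $T$ on $L^{p+\varkappa}(X)$ has been invoked. Since by assumption the order $\mu$ satisfies one of the three inequalities of Corollary \ref{order:condotions:corollary}, that corollary immediately yields that $T:L^{p+\varkappa}(X)\to L^{p+\varkappa}(X)$ is $r$-nuclear, which is the only analytic input from the theory of Fourier integral operators that is needed here.

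Once this is established, I would apply Theorem \ref{Reinov2:q:summability} with the Lebesgue exponent replaced by $p+\varkappa$ and nuclearity index $r$. The defining equation
$$\frac{1}{r} = \frac{1}{q} + \left|\frac{1}{p+\varkappa} - \frac{1}{2}\right|$$
determines $q$ uniquely, and solving for $q$ gives precisely
$$q = \frac{1}{\tfrac{1}{r} - \left|\tfrac{1}{2} - \tfrac{1}{p+\varkappa}\right|}.$$
A quick sanity check shows $q\in(0,2]$: since $0<r\leq 1$ and $|\tfrac{1}{2}-\tfrac{1}{p+\varkappa}|\leq\tfrac{1}{2}$, one has $\tfrac{1}{q}=\tfrac{1}{r}-|\tfrac{1}{2}-\tfrac{1}{p+\varkappa}|\geq 1-\tfrac{1}{2}=\tfrac{1}{2}$, so $q\leq 2$, and $\tfrac{1}{q}\geq\tfrac{1}{2}>0$. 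The Reinov-Latif theorem then delivers
$$\Big(\sum_{j\in\mathbb{N}_0}|\lambda_j(T)|^{q}\Big)^{1/q}\leq C_p\,n_r(T)<\infty,$$
which is the first assertion of the theorem.

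To pass from $q$-summability to the pointwise rate, I would use the non-increasing rearrangement recorded in Remark \ref{ordered:dec}. With $|\lambda_0(T)|\geq|\lambda_1(T)|\geq\cdots\geq 0$, we have for every $j\geq 0$ the elementary bound
$$(j+1)\,|\lambda_j(T)|^{q}\leq\sum_{k=0}^{j}|\lambda_k(T)|^{q}\leq\sum_{k=0}^{\infty}|\lambda_k(T)|^{q},$$
so $|\lambda_j(T)|=O(j^{-1/q})$; substituting the value of $1/q$ identified above yields precisely the claimed decay rate $|\lambda_j(T)|=O(j^{-\tfrac{1}{r}+|\tfrac{1}{2}-\tfrac{1}{p+\varkappa}|})$.

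There is no genuine obstacle in this argument: the nontrivial work, namely establishing $r$-nuclearity from Sogge's $L^p$-bounds for Laplacian eigenfunctions combined with the $L^p$-$L^q$ continuity of Fourier integral operators under the complex factorisation condition, has already been discharged in Corollary \ref{order:condotions:corollary}. The only point requiring verification is that the index $q$ produced by the Reinov-Latif equation lies in the admissible range $(0,2]$, which is elementary and recorded above.
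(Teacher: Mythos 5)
Your proof is correct, and the overall route is the same as the paper's: invoke Corollary \ref{order:condotions:corollary} for the $r$-nuclearity, verify $q\in(0,2]$, and apply the Reinov--Latif summability theorem (Theorem \ref{Reinov2:q:summability}). The only place you diverge is the last step, the passage from $\ell^q$-summability to the pointwise decay rate. You use the one-line Chebyshev bound
\begin{equation*}
(j+1)\,|\lambda_j(T)|^{q}\leq\sum_{k=0}^{j}|\lambda_k(T)|^{q}\leq\sum_{k=0}^{\infty}|\lambda_k(T)|^{q},
\end{equation*}
which indeed gives $|\lambda_j(T)|=O(j^{-1/q})$ and matches the stated conclusion. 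The paper instead proves the stronger fact $\lim_{j\to\infty} j\,|\lambda_j(T)|^q=0$, i.e. $|\lambda_j(T)|=o(j^{-1/q})$, by a two-case Cauchy-criterion argument on even and odd indices (estimating $k|\lambda_{2k}|^q$ and $(k+\tfrac12)|\lambda_{2k+1}|^q$ by tail sums of the convergent series). Your argument is shorter and completely sufficient for the $O$-bound actually claimed in the theorem; the paper's argument buys the sharper little-$o$ rate at the cost of a slightly more involved manipulation. Either is acceptable here.
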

\begin{proof}Note that $p+\varkappa>1.$ Since $0<\frac{1}{p+\varkappa}<1,$ we have that,
either,
$$\frac{1}{p+\varkappa}\leq \frac{1}{2},$$ or that 
$$\frac{1}{p+\varkappa}\geq \frac{1}{2}.$$ In the first case, note that
$$\left|\frac{1}{2}-\frac{1}{p+\varkappa}\right|=\frac{1}{2}-\frac{1}{p+\varkappa}\leq \frac{1}{2}.$$
In the second case, observe that
$$\left|\frac{1}{2}-\frac{1}{p+\varkappa}\right|=\frac{1}{p+\varkappa}-\frac{1}{2}\leq 1- \frac{1}{2}=\frac{1}{2}.$$ 
In consequence
\begin{equation}
   \infty> \frac{1}{r}- \left|\frac{1}{2}-\frac{1}{p+\varkappa}\right|\geq 1-\frac{1}{2}=\frac{1}{2}.
\end{equation} Let us consider 
\begin{equation}
    q=\frac{1}{\frac{1}{r}-\left|\frac{1}{2}-\frac{1}{p+\varkappa}\right|}.
\end{equation}Then $0<q\leq 2,$ and $\frac{1}{q}$ satisfies the identity
$$\frac{1}{r}=\frac{1}{q}+\left|\frac{1}{2}-\frac{1}{p+\varkappa}\right|.$$ In view of Theorem \ref{Reinov2:q:summability}, we have that
\begin{equation}\label{series:q}
\Vert \{\lambda_{j}(T)\}_{j\in \mathbb{N}_0}\Vert=\left(\sum_{j\in\mathbb{N}_0}|\lambda_j(T)|^{q}\right)^{\frac{1}{q}}<\infty,
\end{equation}
where $\lambda_n(T),$ $n\in\mathbb{N},$ is the sequence of eigenvalues of $T$ with multiplicities taken into account. Also, the convergence of this series implies that
\begin{equation}\label{limit}
   \lim_{j\rightarrow\infty} j |\lambda_j(T)|^{q}=0 
\end{equation}
from which we deduce that 
$$|\lambda_j(T)|=O(j^{-\frac{1}{q}}),\quad -\frac{1}{q}=-\frac{1}{r}+\left|\frac{1}{2}-\frac{1}{p+\varkappa}\right|.$$ For the proof of \eqref{limit}, note that the sequence $|\lambda_j(T)|,$ can be assumed to be non-increasing, that is, \begin{equation}
    \vert \lambda_0(T)\vert\geq \vert \lambda_1(T)\vert \geq \vert \lambda_2(T)\vert\geq \cdots\geq  \vert \lambda_j(T)\vert\geq  \cdots \geq 0.
\end{equation} First note that
\begin{equation}\label{eq:1}
    \lim_{k\rightarrow\infty}k\vert \lambda_{2k} \vert^q=0.
\end{equation}Indeed, as the series \eqref{series:q} converges, Cauchy criterion implies that
\begin{equation}
  \lim_{k\rightarrow\infty}k\vert \lambda_{2k} \vert^q= \lim_{k\rightarrow\infty}\sum_{j=1}^k\vert \lambda_{2k} \vert^q\leq \lim_{k\rightarrow\infty} \sum_{j=1}^k \vert \lambda_{k+j} \vert^q =0. 
\end{equation}Note also that
\begin{equation}\label{eq:2}
    \lim_{k\rightarrow\infty}\left(k+\frac{1}{2}\right)\vert \lambda_{2k+1} \vert^q=0.
\end{equation}Indeed, using a similar analysis we have that 
\begin{align*}
    \lim_{k\rightarrow\infty}\left(k+\frac{1}{2}\right)\vert \lambda_{2k+1} \vert^q &=\lim_{k\rightarrow\infty} \frac{1}{2}\vert \lambda_{2k+1} \vert^q+k\vert \lambda_{2k+1} \vert^q\\
    &= \lim_{k\rightarrow\infty}  \frac{1}{2}\vert \lambda_{2k+1} \vert^q+\sum_{j=1}^{k}\vert \lambda_{2k+1} \vert^q\\
    &\leq \lim_{k\rightarrow\infty}  \frac{1}{2}\vert \lambda_{2k+1} \vert^q+\sum_{j=0}^{k}\vert \lambda_{k+j+1} \vert^q=0.
\end{align*}Multiplying both sides of \eqref{eq:1} and of \eqref{eq:2} by $2,$ we get \eqref{limit}.
The proof is complete.
\end{proof}
\begin{remark}
    Note that when $p+\varkappa=2,$ (see Corollary \ref{cor:schatten}), the order condition $\mu<-n/r,$ $0<r\leq 1,$ implies that $T:L^2(X)\rightarrow L^2(X)$ is $r$-nuclear, or equivalently, it belongs to the Schatten-von Neumann class $S_r(L^2(X))$. By applying Theorem \ref{dist:2}, we have that the system of eigenvalues of $T,$ obeys the rate of decay
    \begin{equation}\label{nuclear:vs:weyl}
        |\lambda_j(T)|=O(j^{-\frac{1}{r}}).
    \end{equation}
\end{remark}Using a classical inequality by Hermann Weyl \cite{Weyl1949}, we are going to extend \eqref{nuclear:vs:weyl} to the case $r>1$ in Subsection \ref{FIO:dist}.

\section{Schatten-von Neumann properties of Fourier integral operators revisited}\label{Schatten:sec} 

\subsection{Sufficient conditions}
On a Hilbert space $H$, the ideal of $r$-nuclear operators agrees with the Schatten-von Neumann class of order $r,$ for $0<r\leq 1.$ We recall that on general Banach spaces for $r>1,$ the ideal of $r$-nuclear operators does not increase the elements of the case $r\leq 1$. Having investigated the $r$-nuclearity of Fourier integral operators in the previous section, we are now going to investigate the Hilbert space case, namely when $H=L^{2}(X),$ and then  criteria will be given on the order of these operators to guarantee their membership to the Schatten-von Neumann classes $S_{r}(L^2(X)).$ With this goal, in the next proposition we present some sufficient conditions.

\begin{proposition}\label{Propo:Schatten} Let $0<r<\infty.$
Let $X$ be a compact manifold of dimension $n.$  Let us consider a Fourier integral operator  $T\in  I^\mu_{\rho,1-\rho}(X;C) ,$ where $\rho\in [1/2,1].$ Then $T:L^{2}(X)\rightarrow L^{2}(X)$ belong to the Schatten class of order $r,$ $S_r(L^2(X)),$ provided that $\mu<-n/r.$    
\end{proposition}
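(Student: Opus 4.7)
The plan is to prove this via the $T^*T$ method, which is explicitly mentioned at the beginning of the Main Results subsection as a fundamental tool for reducing the Schatten analysis of Fourier integral operators (both real and complex phases) to that of pseudo-differential operators.

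First, I would form the composition $T^*T$. By the composition calculus for Fourier integral operators (see H\"ormander \cite{Ho} in the real canonical relation case, and Melin--Sj\"ostrand \cite{Melin:SjostrandI,Melin:SjostrandII} for the case of complex canonical relations), the assumption that $C$ is locally the graph of a complex canonical transformation ensures that $T^{*}T$ is a pseudo-differential operator in the Kohn--Nirenberg class $\Psi^{2\mu}_{\rho,1-\rho}(X)$ of order $2\mu$. This is precisely the observation made in the paper just before Proposition \ref{Propo:Schatten}, where it is noted that $s_n(T^*T)=s_n(T)^2$.

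Second, I would invoke known sufficient conditions for pseudo-differential operators on closed manifolds to belong to the Schatten--von Neumann class $S_s(L^2(X))$: for any $s\in (0,\infty)$ and for any $P\in \Psi^{\sigma}_{\rho,1-\rho}(X)$ with $\rho\in[1/2,1]$, the order condition
\begin{equation*}
\sigma<-n/s
\end{equation*}
implies $P\in S_{s}(L^{2}(X))$. This is well-established in the literature (see the bibliographic discussion, in particular the works \cite{DelRuz2014JFA,DelRuz2014CRAS,DelgadoRuzhanskyJAM2018} of the second and third authors). Applying it with $\sigma=2\mu$ and $s=r/2$, the hypothesis $\mu<-n/r$ gives $2\mu<-2n/r=-n/(r/2)$, hence $T^{*}T\in S_{r/2}(L^{2}(X))$.

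Third, I would conclude by the elementary singular value identity. Since $T^*T$ is a non-negative compact self-adjoint operator and $s_n(T)^2=s_n(T^*T)$, we have
\begin{equation*}
\sum_{n\in\mathbb{N}_0} s_n(T)^{r}=\sum_{n\in\mathbb{N}_0}\bigl(s_n(T^*T)\bigr)^{r/2}=\Vert T^*T\Vert_{S_{r/2}}^{r/2}<\infty,
\end{equation*}
so $T\in S_r(L^{2}(X))$, as claimed. The main obstacle is verifying the reduction to a $(\rho,1-\rho)$-pseudo-differential operator uniformly in $\rho\in[1/2,1]$ for complex canonical relations: one needs that the almost analytic calculus of Melin--Sj\"ostrand, applied to $T^*T$, yields an operator in the standard H\"ormander class with the expected order $2\mu$, which is the content of the $L^2$ boundedness argument of H\"ormander \cite{Ho2} adapted to arbitrary order. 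Once this is granted, the rest of the argument is the routine reduction outlined above.
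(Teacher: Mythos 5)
Your proposal is correct and follows essentially the same route as the paper's proof: reduce to $T^*T\in\Psi^{2\mu}_{\rho,1-\rho}(X)$, use the singular value identity $s_n(T)^2=s_n(T^*T)$, and invoke the order condition $2\mu<-n/(r/2)$ for pseudo-differential operators in the Schatten class $S_{r/2}(L^2(X))$. The only difference is that you cite the pseudo-differential Schatten criterion as known, whereas the paper makes it self-contained by factoring $T^*T=\bigl(T^*T\sqrt{1+\Delta}_{g}^{-2\mu}\bigr)\sqrt{1+\Delta}_{g}^{2\mu}$, applying Calder\'on--Vaillancourt to the zero-order factor, and using the ideal property of Schatten classes together with the Schatten membership of Bessel potentials; this is a minor presentational choice and not a substantive divergence.
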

\begin{proof} Note that $T\in S_r(L^2(X))$ if and only if $T^*T\in S_{\frac{r}{2}}(L^2(X)). $ Indeed, note that
\begin{equation}\label{equi:r:r:2}
 \Vert T\Vert_{S_r}^{r}=\sum_{k=1}^{\infty}s_{k}(\vert T\vert)^r=  \sum_{k=1}^{\infty}s_{k}(\vert T\vert^2)^{\frac{r}{2}} =\sum_{k=1}^{\infty}s_{k}(T^{*}T)^{\frac{r}{2}}=\Vert T^*T\Vert_{S_{\frac{r}{2}}}^{\frac{r}{2}}.
\end{equation} Note that $ T^*T$ is a pseudo-differential operator with principal symbol $\vert a\vert^2$ if $a$ is the principal symbol of $T.$ Moreover, $T^*T\in \Psi^{2\mu}_{\rho,1-\rho}(X).$ Then, $T^*T\in  S_{\frac{r}{2}}(L^2(X))$ if 
$$ 2\mu<-\frac{n}{r/2},$$ which is equivalent to the order inequality $\mu<-n/r.$ To prove that $$T^*T\in  S_{\frac{r}{2}}(L^2(X))$$ for $\mu<-\frac{n}{r},$ just notice that we can factorise the operator $T^*T$ as follows
$$T^*T=T^*T\sqrt{1+\Delta}_{g}^{-2\mu} \sqrt{1+\Delta}_{g}^{2\mu}.$$ Since $$T^*T\sqrt{1+\Delta}_{g}^{-2\mu}\in \Psi^{0}_{\rho,1-\rho}(X).$$ By the Calder\'on-Vaillancourt theorem we have that $T^*T\sqrt{1+\Delta}_{g}^{-2\mu}$ is bounded on $L^2(X).$ At the same time $$\sqrt{1+\Delta}_{g}^{2\mu}\in S_{\frac{r}{2}}(L^2(X)),$$
if and only if $2\mu<-\frac{n}{r/2.}$ By the ideal property of the Schatten-classes we have that for $\mu<-n/r,$ one has that $T^*T\in S_{\frac{r}{2}}(L^2(X)), $ or equivalently, $T\in S_r(L^2(X)).$ The proof is complete.    
\end{proof}

\subsection{Necessary conditions for pseudo-differential operators}\label{Necess:Cond:sec}
Now, we are going to analyse the sharpness of the order $\mu<-n/r,$ in Proposition \ref{Propo:Schatten} in the case of pseudo-differential operators, namely the case where
$$\Phi(x,y,\theta):=(x-y,\theta).$$
For this, we will make use of the local Weyl formula for pseudo-differential operators, see Zelditch \cite{Zelditch1994}. 

\begin{theorem}[Local Weyl formula for pseudo-differential operators]\label{T:LWF2:PSDO} Let  $F\in \Psi^{0}_{1,0}(X),$ be pseudo-differential operator of order zero. Then
\begin{equation}\label{LWF2:PSDO}
    \lim_{\lambda\rightarrow\infty}\frac{1}{N(\lambda)}\sum_{k: \lambda_k\leq \lambda}(F\phi_k,\phi_k)=\smallint_{T^*\mathbb{S}(X)}\sigma_F d\mu_{L},
\end{equation} where $T^*\mathbb{S}(X)$ denotes the co-sphere bundle  and  $d\mu_L(x,\xi)$ is the usual
Liouville measure. 
    
\end{theorem}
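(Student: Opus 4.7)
The plan is to establish the refined asymptotics of
\[
N_F(\lambda) := \sum_{\lambda_k \leq \lambda}(F\phi_k,\phi_k)
\]
via a Fourier Tauberian argument applied to the wave trace, and then divide by the classical counting function $N(\lambda)=N_{\mathrm{Id}}(\lambda)$. First I would introduce the half-wave propagator $U(t):=e^{-it\sqrt{-\Delta_g}}$. By the spectral theorem,
\[
\textnormal{Tr}(F U(t)) = \sum_{k} e^{-it\lambda_k}(F\phi_k,\phi_k),
\]
viewed as a tempered distribution in $t\in\mathbb{R}$. For $|t|$ smaller than the injectivity radius of $(X,g)$, $U(t)$ is a Fourier integral operator of order $0$ associated with the graph of the geodesic flow $\chi_t$ on $T^{*}X\setminus 0$, with principal symbol obtained by Hamiltonian transport along $\chi_t$. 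Consequently $FU(t)$ is an FIO with the same canonical relation and principal symbol $\sigma_F\cdot \sigma_{U(t)}$.

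Second, I would analyze the singularity of $\textnormal{Tr}(FU(t))$ at $t=0$. For $0<|t|$ below the injectivity radius the graph of $\chi_t$ has no fixed points, so the wavefront set of $\textnormal{Tr}(FU(t))$ is concentrated at $t=0$. The clean-intersection/stationary-phase calculation along the normal directions to the diagonal, expressed after disintegrating Liouville measure along the radial variable $r=|\xi|_g$ and using the homogeneity of $\sigma_F$ of degree $0$, yields the singular profile
\[
\textnormal{Tr}(FU(t)) \underset{t\to 0}{\sim} \frac{1}{(2\pi)^n}\left(\smallint_{T^{*}\mathbb{S}(X)}\sigma_F\, d\mu_L\right)\cdot \mathcal{F}^{-1}_{r\to t}\!\bigl[r_{+}^{n-1}\bigr] + \text{smoother terms}.
\]

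Third, I would apply H\"ormander's Fourier Tauberian theorem. Pick $\rho\in\mathcal{S}(\mathbb{R})$ with $\rho(0)=1$ and $\hat\rho$ compactly supported in a neighbourhood of $0$ not containing any nonzero period of the geodesic flow. The Fourier inversion identity
\[
(\rho \ast dN_F)(\lambda) = \frac{1}{2\pi}\int \hat{\rho}(t)\,e^{it\lambda}\textnormal{Tr}(FU(t))\,dt,
\]
combined with the preceding singular expansion, gives the refined asymptotics
\[
N_F(\lambda) = \frac{\lambda^n}{n(2\pi)^n}\smallint_{T^{*}\mathbb{S}(X)}\sigma_F\, d\mu_L + O(\lambda^{n-1}).
\]
Since the Tauberian theorem requires a positive spectral measure, the general case is reduced to this one by writing $F=\textnormal{Re}(F)+i\,\textnormal{Im}(F)$ and each self-adjoint part as a difference $F_{+}-F_{-}$ of zeroth-order operators with nonnegative principal symbols (adding and subtracting a large multiple of the identity if necessary).

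Finally, specializing to $F=\mathrm{Id}$ recovers the classical Weyl law $N(\lambda)=\frac{\lambda^n}{n(2\pi)^n}\mathrm{vol}(T^{*}\mathbb{S}(X))+O(\lambda^{n-1})$; dividing $N_F(\lambda)$ by $N(\lambda)$ and letting $\lambda\to\infty$ yields the claimed limit, with the measure $d\mu_L$ on the right-hand side understood to be normalised so that $\int_{T^{*}\mathbb{S}(X)}d\mu_L=1$. The main obstacle is the second step: performing the clean-intersection/stationary-phase computation rigorously enough to read off the exact constant $\int\sigma_F\, d\mu_L$ from the singularity at $t=0$, which requires H\"ormander's parametrix for $U(t)$ and a careful transport analysis of its principal symbol along the geodesic flow. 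This is the technical heart of the Duistermaat--Guillemin method that underlies Zelditch's formulation in \cite{Zelditch1994}.
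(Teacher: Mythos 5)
The paper does not prove Theorem~\ref{T:LWF2:PSDO} at all: it is invoked as a known result of Zelditch, cited from \cite{Zelditch1994}, and used as a black box to derive the necessary conditions for Schatten membership. So there is no proof in the paper against which to compare your argument.

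That said, your sketch is the standard H\"ormander--Duistermaat--Guillemin wave-trace route, and it is essentially correct as a plan. Two points deserve sharpening. First, in the reduction to a positive spectral measure, nonnegativity of the \emph{principal symbol} of a zeroth-order operator does not by itself give $(F\phi_k,\phi_k)\geq 0$: the sharp G{\aa}rding inequality only yields $(F\phi_k,\phi_k)\geq -C$. The clean fix is the one you parenthetically suggest: for self-adjoint $F\in\Psi^0$, replace $F$ by $F+(\|F\|_{L^2\to L^2}+1)I$, whose matrix elements in the $\phi_k$-basis are bounded below by $1$, so that $dN_{F+(\|F\|+1)I}$ is genuinely a positive measure; then recover $N_F$ by subtracting $(\|F\|+1)N$ and treating $N$ by the classical Weyl law. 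Stated that way the Tauberian hypothesis is met directly, without appealing to a decomposition $F=F_+-F_-$. Second, you correctly flag the normalisation: for the identity operator the left-hand side of \eqref{LWF2:PSDO} equals $1$, so the formula as written forces $\smallint_{T^*\mathbb{S}(X)}d\mu_L=1$ (equivalently, a factor $1/\mathrm{Vol}(T^*\mathbb{S}(X))$ is implicit on the right-hand side). This normalisation is consistent with how the same formula is written elsewhere in the manuscript with the explicit factor $1/\mathrm{Vol}(T^*\mathbb{S}(M))$, and it is worth making explicit whenever the formula is applied. With these two refinements, the remaining technical burden is, as you note, the clean-intersection stationary-phase computation identifying the coefficient of the leading singularity of $\mathrm{Tr}(FU(t))$ at $t=0$ with $(2\pi)^{-n}\smallint_{T^*\mathbb{S}(X)}\sigma_F\,d\mu_L$; this is standard but not trivial, and is precisely what \cite{Zelditch1994} supplies.
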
  
Taking inspiration from the methods of harmonic analysis, we are going to estimate the sum $\sum_{\frac{\lambda}{2}<\lambda_k\leq \lambda}(F\phi_k,\phi_k),$ for our further analysis.

\begin{lemma}\label{dyadic:lemma:PSDO} Let $F\in \Psi^{0}_{1,0}(X)$ be a pseudo-differential operator such that the sequence
$$\{(F\phi_k,\phi_k): k\in \mathbb{N}_0\}$$ is real-valued\footnote{This condition is satisfied if e.g. $F$ is self-adjoint.}.
Then, for any $\varepsilon>0,$ there exists $\lambda_0({\varepsilon})>0,$ such that for $\lambda\geq 2\lambda_0(\varepsilon),$ one has that
$$\sum_{\frac{\lambda}{2}<\lambda_k\leq \lambda}(F\phi_k,\phi_k)$$ 
\begin{equation}
  \geq C_n\lambda^n\left(\left(1-\frac{1}{2^n}\right)\smallint_{T^*\mathbb{S}(X)}\sigma_F d\mu_{L}-\varepsilon\left(1+\frac{1}{2^n}\right) \right)+O(\lambda^{n-1}),
\end{equation}where $C_n$ is a geometric constant.
\end{lemma}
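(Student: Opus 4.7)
The plan is to split the dyadic sum
\[
S(\lambda) := \sum_{\lambda/2 < \lambda_k \leq \lambda}(F\phi_k,\phi_k)
\]
as a difference of two cumulative partial sums and apply the local Weyl formula \eqref{LWF2:PSDO} together with the sharp Weyl asymptotic $N(\lambda) = C_n \lambda^n + O(\lambda^{n-1})$ for the eigenvalue counting function of $\sqrt{-\Delta_g}$ on the closed manifold $X$.

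Set $I := \smallint_{T^*\mathbb{S}(X)}\sigma_F\, d\mu_L$. Since the sequence $\{(F\phi_k,\phi_k)\}_{k\in \mathbb{N}_0}$ is real-valued, the averages appearing on the left-hand side of \eqref{LWF2:PSDO} are real, hence their limit $I$ is a real number. Given $\varepsilon>0$, the formula \eqref{LWF2:PSDO} supplies a threshold $\lambda_0(\varepsilon) > 0$ such that
\[
I - \varepsilon \leq \frac{1}{N(\lambda)}\sum_{\lambda_k \leq \lambda}(F\phi_k,\phi_k) \leq I + \varepsilon
\]
for every $\lambda \geq \lambda_0(\varepsilon)$. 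For $\lambda \geq 2\lambda_0(\varepsilon)$ both $\lambda$ and $\lambda/2$ are above the threshold, so writing $S(\lambda)$ as the difference of the partial sums up to $\lambda$ and up to $\lambda/2$, and applying the lower estimate to the first and the upper estimate to the second, we obtain
\[
S(\lambda) \geq N(\lambda)(I - \varepsilon) - N(\lambda/2)(I + \varepsilon).
\]

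Substituting $N(\lambda) = C_n \lambda^n + O(\lambda^{n-1})$ and $N(\lambda/2) = C_n \lambda^n/2^n + O(\lambda^{n-1})$, and collecting terms, produces
\[
S(\lambda) \geq C_n \lambda^n \left[ I\left(1 - \frac{1}{2^n}\right) - \varepsilon\left(1 + \frac{1}{2^n}\right)\right] + O(\lambda^{n-1}),
\]
which is precisely the claimed inequality. The only delicate ingredient is the sharp $O(\lambda^{n-1})$ remainder in Weyl's law for $\sqrt{-\Delta_g}$ on a closed Riemannian manifold; this is H\"ormander's classical refinement, and without invoking it one would only obtain an $o(\lambda^n)$ remainder. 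Apart from that citation, the argument is a routine dyadic telescoping combined with the averaged estimate supplied by \eqref{LWF2:PSDO}, where the real-valuedness hypothesis is exactly what allows us to convert the two-sided asymptotic into signed inequalities on each piece of the split.
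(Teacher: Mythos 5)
Your proof is correct and follows essentially the same route as the paper: telescope the dyadic sum into a difference of cumulative partial sums, apply the local Weyl formula \eqref{LWF2:PSDO} as a two-sided bound on the averages at $\lambda$ and $\lambda/2$ (which requires the real-valuedness hypothesis to make sense of signed inequalities), and then substitute the sharp Weyl asymptotic $N(\lambda)=C_n\lambda^n+O(\lambda^{n-1})$. Your remark that the $O(\lambda^{n-1})$ remainder is what is needed (rather than the softer $o(\lambda^n)$) is a fair point that the paper leaves implicit.
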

\begin{proof}
     Note that the fact that $(F\phi_k,\phi_k)$ is real-valued assures that the right-hand side of \eqref{LWF2:PSDO} is real-valued.  Let us fix $\varepsilon>0.$ There exists $\lambda_0(\varepsilon)$ such that for any $\lambda> \lambda_0(\varepsilon)$ we have that
     \begin{equation}
         -\varepsilon+\smallint_S \sigma_F d\mu_{L}\leq \frac{1}{N(\lambda)}\sum_{k: \lambda_k\leq \lambda}(F\phi_k,\phi_k)\leq \varepsilon+\smallint_S \sigma_F d\mu_{L},
     \end{equation}
where we have written
$$\smallint_S \sigma_F d\mu_{L} = \smallint_{T^*\mathbb{S}(X)}\sigma_F d\mu_{L} $$ by denoting $S=T^*\mathbb{S}(X)$  to simplify the notation. At this point we recall the Weyl formula
$$ N(\lambda)=C_n\lambda^n+O(\lambda^{n-1}) .$$
In particular, if $\lambda>2\lambda_0(\varepsilon)$ we have the inequality
\begin{equation}
    \frac{1}{N(\lambda/2)}\sum_{k: \lambda_k\leq \lambda/2}(F\phi_k,\phi_k)\leq \varepsilon+\smallint_S \sigma_Fd\mu_L,
\end{equation} from which we have that 
$$ -\sum_{k: \lambda_k\leq \lambda/2}(F\phi_k,\phi_k)\geq -N(\lambda/2)\left(\varepsilon+\smallint_S \sigma_Fd\mu_L\right) $$
$$=-\left( \frac{C_n}{2^n}\lambda^n+\frac{1}{2^{n-1}}O(\frac{1}{2^{n-1}}\lambda^{n-1})\right)\left(\varepsilon+\smallint_S \sigma_Fd\mu_L\right).$$ On the other hand note that the inequality
$$ \frac{1}{N(\lambda)}\sum_{k: \lambda_k\leq \lambda}(F\phi_k,\phi_k) \geq -\varepsilon+\smallint_S \sigma_F d\mu_{L}  $$
can be written as follows
\begin{equation}
   \sum_{k: \lambda_k\leq \lambda}(F\phi_k,\phi_k)\geq \left(C_n\lambda^n+O(\lambda^{n-1})\right) \left( -\varepsilon+\smallint_S \sigma_F d\mu_{L}\right).
\end{equation} Now let us estimate from below the contribution of the terms $(T\phi_k,\phi_k)$ in the region
$$\left\{\lambda_k: {\lambda}/{2}<\lambda_k\leq \lambda\right\}.$$ Note that
\begin{align*}
    \sum_{k:\frac{\lambda}{2}<\lambda_k\leq \lambda}(F\phi_k,\phi_k) &= \sum_{k: \lambda_k\leq \lambda}(F\phi_k,\phi_k)-\sum_{k: \lambda_k\leq \lambda/2}(F\phi_k,\phi_k)\\
    &\geq \left(C_n\lambda^n+O(\lambda^{n-1})\right) \left( -\varepsilon+\smallint_S \sigma_F d\mu_{L}\right)\\
    &-\left( \frac{C_n}{2^n}\lambda^n+\frac{1}{2^{n-1}}O(\frac{1}{2^{n-1}}\lambda^{n-1})\right)\left(\varepsilon+\smallint_S \sigma_Fd\mu_L\right)\\
    &=C_n\lambda^{n}\left( -\varepsilon+\smallint_S \sigma_F d\mu_{L}\right)+O(\lambda^{n-1})\left( -\varepsilon+\smallint_S \sigma_F d\mu_{L}\right)\\
    &-\frac{C_n}{2^n}\lambda^n\left(\varepsilon+\smallint_S \sigma_Fd\mu_L\right)+O(\frac{1}{2^{n-1}}\lambda^{n-1})\left(\varepsilon+\smallint_S \sigma_Fd\mu_L\right)\\
    &=C_n\lambda^{n}\left( -\varepsilon+\smallint_S \sigma_F d\mu_{L}\right)-\frac{C_n}{2^n}\lambda^n\left(\varepsilon+\smallint_S \sigma_Fd\mu_L\right)
    +O(\lambda^{n-1})\\
    &=C_{n}\lambda^n\left(\smallint_S \sigma_F d\mu_{L}\left(1-\frac{1}{2^n}\right)-\varepsilon\left(1+\frac{1}{2^n}\right)\right)+O(\lambda^{n-1}).
\end{align*}The proof is complete.     
\end{proof}

\begin{proposition}\label{propo:PSDO} 
Let $X$ be a compact manifold of dimension $n.$  Let us consider a pseudo-differential operator  $T\in  \Psi^\mu_{1,0}(X) .$ Then $T:L^{2}(X)\rightarrow L^{2}(X)$ belongs to the Schatten class $S_1(L^2(X)),$  namely, $T$ is of trace class on $L^2,$ if and only if  $\mu <-n,$ provided that at least one of the following averages 
\begin{equation}\label{positivity}
    \smallint_{T^*\mathbb{S}(X)}\textnormal{Re}(\sigma_F) d\mu_{L}, \quad \smallint_{T^*\mathbb{S}(X)}\textnormal{Im}(\sigma_F)  d\mu_{L},
\end{equation}
is strictly positive.    
\end{proposition}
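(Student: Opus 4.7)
The sufficiency $\mu<-n\Rightarrow T\in S_1(L^2(X))$ is an immediate application of Proposition \ref{Propo:Schatten} with $r=1$, so the content of the proof lies in the converse direction: starting from $T\in S_1(L^2(X))$ together with the positivity hypothesis \eqref{positivity}, one must deduce $\mu<-n$. The plan is to reduce the analysis to an order-zero statement amenable to the local Weyl formula, and then to apply Lemma \ref{dyadic:lemma:PSDO} dyadically, converting the averaged spectral asymptotics into a genuine constraint on $\mu$.

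First I would conjugate $T$ with a suitable power of the elliptic Bessel operator to neutralise its order. Let $\Lambda:=\sqrt{1-\Delta_g}$ and set $F:=\Lambda^{-\mu/2}T\Lambda^{-\mu/2}\in\Psi^{0}_{1,0}(X)$. The spectral identity $\Lambda\phi_k=\lambda_k\phi_k$ together with the self-adjointness of $\Lambda^{-\mu/2}$ yields the key relation
\[
(F\phi_k,\phi_k)=\lambda_k^{-\mu}(T\phi_k,\phi_k),\quad k\in\mathbb{N}_0,
\]
while the principal symbol of $F$ equals $|\xi|^{-\mu}\sigma_T(x,\xi)$ modulo lower order, so on the cosphere bundle $T^{*}\mathbb{S}(X)$ one has $\sigma_F|_{|\xi|=1}=\sigma_T|_{|\xi|=1}$ and hence
\[
\smallint_{T^{*}\mathbb{S}(X)}\textnormal{Re}(\sigma_F)\,d\mu_L=\smallint_{T^{*}\mathbb{S}(X)}\textnormal{Re}(\sigma_T)\,d\mu_L,
\]
and likewise for the imaginary parts. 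Depending on which of the two averages in \eqref{positivity} is strictly positive, I would then select $G:=\tfrac12(F+F^{*})$ or $G:=\tfrac{1}{2i}(F-F^{*})$; in either case $G\in\Psi^{0}_{1,0}(X)$ is self-adjoint, the sequence $\{(G\phi_k,\phi_k)\}_{k}$ is real, and $\smallint_{T^{*}\mathbb{S}(X)}\sigma_G\,d\mu_L>0$.

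Applying Lemma \ref{dyadic:lemma:PSDO} to $G$, with $\varepsilon$ chosen small enough that the bracketed constant remains strictly positive, gives for all sufficiently large $\lambda$
\[
\sum_{\lambda/2<\lambda_k\leq\lambda}(G\phi_k,\phi_k)\geq c\,\lambda^{n}+O(\lambda^{n-1}),\qquad c>0.
\]
Unravelling the definition, $(G\phi_k,\phi_k)$ equals $\lambda_k^{-\mu}\textnormal{Re}(T\phi_k,\phi_k)$ or $\lambda_k^{-\mu}\textnormal{Im}(T\phi_k,\phi_k)$; since $\lambda_k\asymp\lambda$ throughout the dyadic annulus $\lambda/2<\lambda_k\leq\lambda$, the factor $\lambda_k^{-\mu}$ is uniformly comparable to $\lambda^{-\mu}$, and factoring this scalar out while majorising each real or imaginary part by $|(T\phi_k,\phi_k)|$ yields
\[
\sum_{\lambda/2<\lambda_k\leq\lambda}|(T\phi_k,\phi_k)|\geq c'\,\lambda^{n+\mu}.
\]
On the other hand, the trace class hypothesis combined with the polar decomposition $T=U|T|$ and the Cauchy--Schwarz inequality furnishes $\sum_{k}|(T\phi_k,\phi_k)|\leq\|T\|_{S_1}<\infty$, so the dyadic tails must vanish as $\lambda\to\infty$. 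Confronting the two estimates forces $\lambda^{n+\mu}\to0$, and therefore $\mu<-n$, as required.

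The main obstacle in this scheme is the dyadic lower bound of Lemma \ref{dyadic:lemma:PSDO} itself: one must upgrade the Ces\`aro-type identity \eqref{LWF2:PSDO} into a genuine estimate on each individual shell $\lambda/2<\lambda_k\leq\lambda$, and ensuring that the error term from the Weyl law $N(\lambda)=C_n\lambda^n+O(\lambda^{n-1})$ does not swallow the main term when subtracting the sums over $\{\lambda_k\leq\lambda\}$ and $\{\lambda_k\leq\lambda/2\}$ is the delicate step of the argument; it is precisely this localised spectral estimate that transforms the smoothed asymptotics into the sharp pointwise constraint on the order of $T$.
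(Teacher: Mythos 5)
Your proof is correct and takes essentially the same route as the paper: reduce $T$ to an order-zero pseudo-differential operator by composing with a Bessel potential so that the diagonal matrix elements carry the exact weight $\lambda_k^{-\mu}$, invoke the dyadic lower bound of Lemma \ref{dyadic:lemma:PSDO} under the positivity hypothesis, and derive a contradiction with the trace-class bound $\sum_k|(T\phi_k,\phi_k)|\leq\|T\|_{S_1}$. Your symmetric conjugation $\Lambda^{-\mu/2}T\Lambda^{-\mu/2}$ in place of the paper's one-sided composition $T\sqrt{1-\Delta_g}^{-\mu}$, and your clean vanishing-of-tails contradiction in place of the paper's divergent dyadic series, are cosmetic variations of the same argument.
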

\begin{proof}
    In view of Proposition \ref{Propo:Schatten}, the order condition $\mu<-n,$ implies that $T\in S_1(L^2(X))$ (i.e. $T$ is of trace class on $L^{2}(X)$). Now, let us consider $T$ to be of trace class. Since $$\textnormal{Re}(T)=\frac{T+T^*}{2}$$ and $$\textnormal{Im}(T)=\frac{T-T^*}{2i},$$ are also of trace class we have that 
    \begin{equation}
        \textnormal{Tr}(T)= \textnormal{Tr}(\textnormal{Re}(T))+i\textnormal{Tr}(\textnormal{Im}(T)).
    \end{equation} Let $T_0=\textnormal{Re}(T),$ and $T_1=\textnormal{Im}(T).$ Note that each $T_\ell$ is self-adjoint on $L^{2}(X),$ where $\ell=0,1.$ Let us estimate the trace $\textnormal{Tr}(T_\ell).$ Indeed,  if $(\phi_k,\lambda_k)$ are the spectral data of $\sqrt{1-\Delta_g}$ 
we have that 
\begin{align*}
    \textnormal{Tr}(T_\ell)=\sum_{\lambda_k}(T_\ell\phi_k,\phi_k) &\asymp \sum_{s=0}^{\infty}\sum_{2^{s-1}<\lambda_k\leq 2^{s}  } (T_\ell\phi_k,\phi_k)\\
    &= \sum_{s=0}^{\infty}\sum_{2^{s-1}<\lambda_k\leq 2^{s}  } \lambda_k^{\mu}(T_\ell \lambda_k^{-\mu}\phi_k,\phi_k)\\
     &= \sum_{s=0}^{\infty}\sum_{2^{s-1}<\lambda_k\leq 2^{s}  } \lambda_k^{\mu}(T_\ell \sqrt{1-\Delta_g})^{-\mu}\phi_k,\phi_k).
\end{align*}
Without loss of generality we can assume that 
$$\smallint_{T^*\mathbb{S}(X)}\textnormal{Re}(\sigma_F) d\mu_{L}>0.$$
We can choose $\varepsilon>0$ such that 
$$ C_\varepsilon=\left(1-\frac{1}{2^n}\right)\smallint_{T^*\mathbb{S}(X)}\textnormal{Re}(\sigma_F) d\mu_{L}-\varepsilon\left(1+\frac{1}{2^n}\right)>0.$$ Indeed, such a $\varepsilon$ satisfies the inequality
$$0<\varepsilon< \frac{\left(1-\frac{1}{2^n}\right)}{\left(1+\frac{1}{2^n}\right)}\smallint_{T^*\mathbb{S}(X)}\textnormal{Re}(\sigma_F).$$
Note that, $\textnormal{Re}(\sigma_F) $ is the principal symbol of $T_0,$ and that the sequence
$$ (T_0 \sqrt{1-\Delta_g})^{-\mu}\phi_k,\phi_k)= \lambda_k^{-\mu}(T_0 \phi_k,\phi_k)$$ is real-valued. Observe also that the pseudo-differential operator $$F_{0}=T_0\sqrt{1-\Delta_g})^{-\mu} \in I^0_{1,0}(X)$$ has order zero. In view of  Lemma \ref{dyadic:lemma:PSDO}, there exists $\lambda_0(\varepsilon)$ such that for $\lambda\geq \lambda_0(\varepsilon), $ one has that 
$$\sum_{2^{s-1}<\lambda_k\leq 2^{s}}(F_0\phi_k,\phi_k)\geq  C_nC_\varepsilon 2^{sn}+O(2^{s(n-1)})$$
where $C_n$ is a geometric constant. Note that 

\begin{align*}
     \textnormal{Tr}(T_0) &\asymp \sum_{s=0}^{\infty}\sum_{2^{s-1}<\lambda_k\leq 2^{s}  } \lambda_k^{\mu}(T_0 \sqrt{1-\Delta_g})^{-\mu}\phi_k,\phi_k)\\
     &\asymp \sum_{s=0}^{\infty}\sum_{2^{s-1}<\lambda_k\leq 2^{s}  } 2^{s\mu}(T_0 \sqrt{1-\Delta_g})^{-\mu}\phi_k,\phi_k)\\
     &\gg_{\varepsilon} \sum_{s=0}^{\infty} (C_nC_\varepsilon 2^{s(n+\mu)}+O(2^{s(\mu+n-1)})).
\end{align*} If the order $\mu$ is such that  $\mu\geq -n,$ the  series $$ \sum_{s=0}^{\infty} (C_nC_\varepsilon 2^{s(n+\mu)}+O(2^{s(\mu+n-1)})),$$ diverges which contradicts that $T_0$ is of trace class. In consequence, we have that $\mu<-n.$ The proof is complete.  
\end{proof}

\begin{theorem}\label{th:trace:class:PSDO} Let $X$ be a compact manifold of dimension $n.$  Let us consider a pseudo-differential operator  $T\in  \Psi^\mu_{1,0}(X) .$ Then $T:L^{2}(X)\rightarrow L^{2}(X)$ belongs to the Schatten class $S_1(L^2(X)),$  namely, $T$ is of trace class on $L^2,$ if and only if  $\mu <-n.$ 
    
\end{theorem}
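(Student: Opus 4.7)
The sufficient direction is immediate from Proposition \ref{Propo:Schatten} applied with $r=1$.

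For the necessary direction the plan is to microlocalize $T$ so as to reduce to Proposition \ref{propo:PSDO}. Assume $T \in S_1(L^2(X))$ and let $a = \sigma_T$ be a representative of the principal symbol of $T$. Since $T$ is of order exactly $\mu$, the function $a$ does not vanish identically on the cosphere bundle $T^*\mathbb{S}(X)$, so we can fix a point $(x_0, \xi_0) \in T^*\mathbb{S}(X)$ with $a(x_0, \xi_0) \neq 0$. Multiplying $T$ by a suitable unit complex number $z$ (which preserves both its order and its trace-class property), we may assume without loss of generality that $\textnormal{Re}(a(x_0, \xi_0)) > 0$.

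Next, choose an open conic neighborhood $\Gamma \subset T^*X \setminus 0$ of the ray $\{(x_0, t\xi_0): t>0\}$ on which $\textnormal{Re}(a) > 0$, and pick a symbol $\chi \in S^0_{1,0}(T^*X)$ that is homogeneous of degree zero for large $|\xi|$, non-negative, supported in $\Gamma$, and satisfies $\chi(x_0, \xi_0) > 0$. Set $\Psi := \textnormal{Op}(\chi) \in \Psi^0_{1,0}(X)$. Then $\Psi$ is bounded on $L^2(X)$ by the Calder\'on-Vaillancourt theorem, so the ideal property of $S_1(L^2(X))$ yields $T\Psi \in S_1(L^2(X))$. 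The composition $T\Psi$ lies in $\Psi^\mu_{1,0}(X)$ with principal symbol $a\chi$, and this symbol is positive in real part on an open piece of the cosphere bundle, so $T\Psi$ has order exactly $\mu$. Moreover
\begin{equation*}
\smallint_{T^*\mathbb{S}(X)} \textnormal{Re}(a\chi)\, d\mu_L = \smallint_{T^*\mathbb{S}(X)} \chi\, \textnormal{Re}(a)\, d\mu_L > 0,
\end{equation*}
because the integrand is non-negative on $T^*\mathbb{S}(X)$ and strictly positive on an open neighborhood of $(x_0, \xi_0)$. Applying Proposition \ref{propo:PSDO} to $T\Psi$ now forces $\mu < -n$, completing the proof.

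The main delicate point is the construction of the microlocal cut-off $\chi$: it must simultaneously be non-negative, supported in a region where $\textnormal{Re}(a) > 0$, and yield a composition $T\Psi$ whose principal symbol remains genuinely of order $\mu$, so that the conclusion of Proposition \ref{propo:PSDO} bounds $\mu$ itself rather than the order of some truncated operator that has dropped below $\mu$.
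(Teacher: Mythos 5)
Your proposal is correct, and it reduces to Proposition \ref{propo:PSDO} just as the paper does, but the mechanism for producing a trace class operator whose symbol has a strictly positive Liouville average is genuinely different. The paper forms $F = \textnormal{Re}(T) + \varkappa\,\textnormal{Re}(T)^2\sqrt{1-\Delta_g}^{-\mu}$ (or the imaginary-part analogue): the added term has the same order $\mu$ and a manifestly non-negative principal symbol, so choosing $\varkappa$ large pushes $\smallint_{T^*\mathbb{S}(X)}\textnormal{Re}(\sigma_F)\,d\mu_L$ above zero, while $F$ remains trace class because $\textnormal{Re}(T)\sqrt{1-\Delta_g}^{-\mu}$ is $L^2$-bounded. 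You instead rotate by a unit modulus constant $z$ and then post-compose with a non-negative microlocal cut-off $\Psi$ concentrated in a cone where $\textnormal{Re}(za)>0$; the operator $zT\Psi$ stays trace class by the ideal property of $S_1$, and its symbol $za\chi$ has positive average because the integrand is non-negative and strictly positive near $(x_0,\xi_0)$. Your route avoids the auxiliary parameter $\varkappa$ and gives a more geometric picture of where the non-degeneracy of $\sigma_T$ is being used, at the small cost of constructing the localizer and observing that the degree-zero homogeneous part $\chi_\infty$ of $\chi$ is what actually enters the restriction to $T^*\mathbb{S}(X)$; the paper's algebraic perturbation avoids microlocalization altogether but requires squaring $\textnormal{Re}(T)$. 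Both arguments share the implicit convention, present in the paper's own phrasing (``$T\neq 0$'' with $\textnormal{Re}(\sigma_T)\not\equiv 0$ or $\textnormal{Im}(\sigma_T)\not\equiv 0$), that $T$ has order exactly $\mu$, i.e.\ its degree-$\mu$ principal symbol does not vanish identically, without which the necessity direction would be vacuous.
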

\begin{proof} In view of Proposition \ref{Propo:Schatten}, the order condition $\mu<-n,$ implies that $T\in S_1(L^2(X))$ (i.e. $T$ is of trace class on $L^{2}(X)$). On the other hand, if $T$ is a trace class operator and if at least one of the following averages 
\begin{equation}
    \smallint_{T^*\mathbb{S}(X)}\textnormal{Re}(\sigma_F) d\mu_{L}, \quad \smallint_{T^*\mathbb{S}(X)}\textnormal{Im}(\sigma_F)  d\mu_{L},
\end{equation}
is strictly positive, the result has been proved in Proposition \ref{propo:PSDO}. In the general case, assume that $T$ is of trace class. Then $\textnormal{Re}(T)=\frac{1}{2}(T+T^*)$ is of trace class. 

Since $S=\textnormal{Re}(T)$ is a self-adjoint operator, note that $S^*=\textnormal{Re}(T).$  Moreover
$$  \textnormal{Re}(T)^2=S^2=S^*S\in \Psi^{2\mu}_{1,0}(X),$$ is a pseudo-differential operator of order $2\mu.$  In consequence, the composition operator $\textnormal{Re}(T)^2\sqrt{1-\Delta_g}^{-\mu}$ is also a pseudo-differential operator. Therefore, we have that
$$ \textnormal{Re}(T)^2\sqrt{1-\Delta_g}^{-\mu}= \textnormal{Re}(T)\textnormal{Re}(T)\sqrt{1-\Delta_g}^{-\mu}\in \Psi^{\mu}_{1,0}(X).$$
Let us assume that $T\neq 0,$ has  principal symbol $\sigma_T,$ with $\textnormal{Re}(\sigma_T)\not\equiv 0.$
Let $\varkappa>0$  and define the operator
\begin{align*}
 F=  \textnormal{Re}(T)+\varkappa  \textnormal{Re}(T)^2\sqrt{1-\Delta_g}^{-\mu} \in \Psi^{\mu}_{1,0}(X).
\end{align*} The principal symbol of $F$ is real-valued and can be computed as follows
$$\sigma_F=\textnormal{Re}(\sigma_T)+\varkappa \textnormal{Re}(\sigma_T)^2(1+\Vert\xi\Vert)^{-\mu}=\textnormal{Re}(\sigma_F).$$ On the other hand
by choosing $\varkappa>0$ such that 
\begin{equation*}
  \varkappa>-\frac{\smallint_{T^*\mathbb{S}(X)}\textnormal{Re}(\sigma_T) d\mu_{L}}{\smallint_{T^*\mathbb{S}(X)}\textnormal{Re}(\sigma_T)^2(1+\Vert\xi\Vert)^{-\mu} d\mu_{L}}  
\end{equation*} we have that 
\begin{align*} &\smallint_{T^*\mathbb{S}(X)}\textnormal{Re}(\sigma_F) d\mu_{L}\\
&=\smallint_{T^*\mathbb{S}(X)}\textnormal{Re}(\sigma_T) d\mu_{L}+\varkappa \smallint_{T^*\mathbb{S}(X)} \textnormal{Re}(\sigma_T)^2(1+\Vert\xi\Vert)^{-\mu} d\mu_{L}>0.    
\end{align*}So, the pseudo-differential operator $ F=  \textnormal{Re}(T)+\varkappa  \textnormal{Re}(T)^2\sqrt{1-\Delta_g}^{-\mu} \in \Psi^{\mu}_{1,0}(X)$ has order $\mu,$ is of trace class and its (real-valued) principal symbol $\sigma_F$ satisfies the positivity condition 
$$\smallint_{T^*\mathbb{S}(X)}\textnormal{Re}(\sigma_F) d\mu_{L}>0.$$ It follows from Proposition \ref{propo} that $\mu<-n.$  On the other hand, if $\textnormal{Re}(\sigma_T)\equiv 0,$ since $T\neq 0,$ we have that  $\textnormal{Im}(\sigma_T)\not\equiv  0.$ Then, we repeat all the analysis above by considering 
\begin{align*}
 F=  \textnormal{Im}(T)+\varkappa  \textnormal{Im}(T)^2\sqrt{1-\Delta_g}^{-\mu} \in \Psi^{\mu}_{1,0}(X).
\end{align*}  By choosing    
$\varkappa>0$ such that 
\begin{equation*}
  \varkappa>-\frac{\smallint_{T^*\mathbb{S}(X)}\textnormal{Im}(\sigma_T) d\mu_{L}}{\smallint_{T^*\mathbb{S}(X)}\textnormal{Im}(\sigma_T)^2(1+\Vert\xi\Vert)^{-\mu} d\mu_{L}}  
\end{equation*} we have that 
\begin{align*} &\smallint_{T^*\mathbb{S}(X)}\textnormal{Im}(\sigma_F) d\mu_{L}\\
&=\smallint_{T^*\mathbb{S}(X)}\textnormal{Im}(\sigma_T) d\mu_{L}+\varkappa \smallint_{T^*\mathbb{S}(X)} \textnormal{Im}(\sigma_T)^2(1+\Vert\xi\Vert)^{-\mu} d\mu_{L}>0.    
\end{align*} Then, the pseudo-differential operator $ F=  \textnormal{Im}(T)+\varkappa  \textnormal{Im}(T)^2\sqrt{1-\Delta_g}^{-\mu} \in \Psi^{\mu}_{1,0}(X)$ has order $\mu,$ is of trace class and its (real-valued) principal symbol $\sigma_F$ satisfies the positivity condition 
$$\smallint_{T^*\mathbb{S}(X)}\textnormal{Im}(\sigma_F) d\mu_{L}>0.$$ It follows from Proposition \ref{propo} that $\mu<-n.$
The proof is complete. 
\end{proof}

\begin{lemma}\label{Invariance:trace:2:PSDO} Let $X$ be a compact manifold of dimension $n.$  Let us consider a pseudo-differential operator  $A\in  \Psi^\mu_{1,0}(X) $ and let $r>1.$  If $A\in S_r(L^2(X))$ then $\mu\leq -n/r.$
\end{lemma}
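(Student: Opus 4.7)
The plan is to reduce the Schatten-class membership of $A$ to the trace class criterion already established in Theorem \ref{th:trace:class:PSDO}, by means of H\"older's inequality for Schatten ideals. Recall that if $P\in S_r(L^2(X))$ and $Q\in S_{r'}(L^2(X))$ satisfy $\frac{1}{r}+\frac{1}{r'}=1$, then the composition $PQ$ lies in $S_1(L^2(X))$. Fix $r':=r/(r-1)$, the H\"older conjugate of $r$, which is finite and strictly greater than $1$ since $r>1$. The idea is to apply this principle with $P=A$ and $Q=\sqrt{1-\Delta_g}^{-s}\in \Psi^{-s}_{1,0}(X)$, the Bessel potential of order $-s$ for a suitably chosen $s>0$.

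For the Bessel potential $E:=\sqrt{1-\Delta_g}^{-s}$, the computation already carried out inside the proof of Proposition \ref{Propo:1} gives
\begin{equation*}
\|E\|_{S_{r'}}^{r'}\asymp\sum_{\ell=0}^{\infty} d_\ell\,\lambda_\ell^{-sr'},
\end{equation*}
and, by the Weyl asymptotics for the spectral counting function of $\sqrt{1-\Delta_g}$ recorded there, this series converges if and only if $sr'>n$. Consequently, for every $s>n/r'$ one has $E\in S_{r'}(L^2(X))$.

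Fix such an $s$. Then the H\"older inequality for Schatten classes yields $AE\in S_1(L^2(X))$. On the other hand, the symbolic calculus in $\Psi^\mu_{1,0}(X)$ shows that the composition $AE$ is itself a pseudo-differential operator in $\Psi^{\mu-s}_{1,0}(X)$. Invoking the necessity part of Theorem \ref{th:trace:class:PSDO} applied to the trace class operator $AE$, we obtain $\mu-s<-n$, equivalently $\mu<-n+s$. Since this bound holds for every $s>n/r'$, passing to the limit $s\downarrow n/r'$ yields
\begin{equation*}
\mu\leq -n+\frac{n}{r'}=-n\left(1-\frac{1}{r'}\right)=-\frac{n}{r},
\end{equation*}
which is the desired inequality.

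The only delicate ingredient is the sharp identification of the Schatten-class membership of $E$ for $s>n/r'$, and this is entirely supplied by the Weyl law underlying Proposition \ref{Propo:1}. The non-strictness of the resulting bound $\mu\leq -n/r$ (as opposed to $\mu<-n/r$) is intrinsic to the limiting procedure $s\downarrow n/r'$, and upgrading it to the strict inequality $\mu<-n/r$ for non-integer $r\in (1,\infty)\setminus\mathbb{Z}$ is precisely the content of Open Problem \ref{open:problem}.
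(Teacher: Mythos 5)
Your proof is correct and follows essentially the same route as the paper: you take the Hölder conjugate $r'=r/(r-1)$, compose $A$ with a Bessel potential $\sqrt{1-\Delta_g}^{-s}$ lying in $S_{r'}(L^2(X))$ for $s>n/r'$, conclude $A\sqrt{1-\Delta_g}^{-s}\in S_1$ by Hölder's inequality for Schatten ideals, invoke Theorem \ref{th:trace:class:PSDO} to get $\mu-s<-n$, and then let $s\downarrow n/r'$. The paper parametrises the exponent slightly differently (writing $s=-n/q-\varepsilon$ with $q$ the conjugate exponent, so the Bessel potential has order $s<0$), but the two arguments are the same computation; your closing remark about Open Problem \ref{open:problem} is a useful observation, though note that the open problem as stated concerns Fourier integral operators rather than pseudo-differential operators.
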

\begin{proof}
    Let $\varepsilon>0.$ Consider the operator $\sqrt{1-\Delta}^s$ where $s$ satisfies
$$ s:=-\frac{n}{q}-\varepsilon<-\frac{n}{q},$$ and where $$q:=r/(r-1)$$ is the conjugate exponent of $r.$ Then, $q$ is given by the identity 
$1=\frac{1}{q}+\frac{1}{r}.$ Note that since $r>1,$ one has that $q>1.$ Note also that $$\sqrt{1-\Delta}^s\in S_q(L^2(X)).$$ Since $q$ and $r$ and conjugate exponents, we have that
\begin{equation*}
    \tilde{A}:=A\sqrt{1-\Delta}^s\in S_1(L^2(X)).
\end{equation*}By Theorem \ref{th:trace:class:PSDO} we have that the order $\mu+s$ of $\tilde A\in S_1$ satisfies the inequality 
$\mu+s<-n.$ Then, we have that 
$$  \mu<-s-n=\frac{n}{q}+\varepsilon -n=n\left(\frac{1}{q}-1\right)+\varepsilon =-\frac{n}{r}+\varepsilon\,, $$
where the last hold true for any $\epsilon>0$.
Taking $\varepsilon\rightarrow 0^+$ we have that $\mu\leq -n/r.$
\end{proof}

\begin{theorem}\label{even:condition:PDSO} Let $X$ be a compact manifold of dimension $n.$  Let us consider a pseudo-differential operator  $T\in  I^\mu_{1,0}(X) $ and let $r\in \mathbb{N}= (0,\infty)\cap \mathbb{Z}.$ Then $T:L^{2}(X)\rightarrow L^{2}(X)$ belongs to the Schatten class $S_r(L^2(X)),$  if and only if  $\mu <-n/r.$ 
    
\end{theorem}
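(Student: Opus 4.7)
The sufficiency direction $\mu < -n/r \Longrightarrow T \in S_r(L^2(X))$ is already contained in Proposition~\ref{Propo:Schatten}, so I only need to address necessity: assuming $T \in \Psi^\mu_{1,0}(X)$ belongs to the Schatten class $S_r(L^2(X))$ with $r \in \mathbb{N}$, I plan to show that $\mu < -n/r$. The core idea is to reduce the general integer case to the trace class case, Theorem~\ref{th:trace:class:PSDO}, by considering the $r$-fold composition $T^r$, thereby realising the chain of implications advertised in the introduction, $\textnormal{(i)}' \Longrightarrow \textnormal{(ii)}$.

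Concretely, I would proceed in three steps. First, since the Kohn-Nirenberg calculus $\Psi^{\cdot}_{1,0}(X)$ is closed under composition with orders adding, the $r$-th power $T^r$ belongs to $\Psi^{r\mu}_{1,0}(X)$. Second, applying the H\"older inequality for Schatten norms, $\|AB\|_{S_p} \leq \|A\|_{S_{p_1}}\|B\|_{S_{p_2}}$ whenever $1/p = 1/p_1 + 1/p_2$, inductively to the $r$ factors of $T^r$ yields
\[
\|T^r\|_{S_1} \leq \|T\|_{S_r}^r < \infty,
\]
so that $T^r$ is of trace class on $L^2(X)$. Third, applying Theorem~\ref{th:trace:class:PSDO} to the trace class pseudo-differential operator $T^r \in \Psi^{r\mu}_{1,0}(X)$ delivers the strict order inequality $r\mu < -n$, which rearranges to $\mu < -n/r$, as required.

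The two calculus facts used, namely closure of $\Psi^{\cdot}_{1,0}(X)$ under composition and the Schatten H\"older inequality, are both classical, so the only substantive analytic input is the trace class result of Theorem~\ref{th:trace:class:PSDO}, which is in turn derived from the local Weyl formula of Zelditch (Theorem~\ref{T:LWF2:PSDO}). The main conceptual point, and arguably the only place where one might expect difficulty, is that the integrality of $r$ is precisely what permits this clean $r$-fold composition: for non-integer exponents one is forced instead to compose $T$ with a fractional power of $\sqrt{1-\Delta_g}$ as in Lemma~\ref{Invariance:trace:2:PSDO}, which only yields the non-strict inequality $\mu \leq -n/r$. This is also what separates the setting of Theorem~\ref{even:condition:PDSO} from the open case addressed in Open Problem~\ref{open:problem}.
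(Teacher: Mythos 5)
Your proof is correct and follows essentially the same strategy as the paper's: form the $r$-fold composition $T^{r}\in\Psi^{r\mu}_{1,0}(X)$, use the Schatten–H\"older inequality to conclude $T^{r}\in S_{1}(L^2(X))$, and invoke Theorem~\ref{th:trace:class:PSDO} to get the strict inequality $r\mu<-n$. Your version is in fact slightly more direct than the paper's, which first detours through Lemma~\ref{Invariance:trace:2:PSDO} to obtain $\mu\leq -n/r$ and then separately rules out the borderline $\mu=-n/r$; since Theorem~\ref{th:trace:class:PSDO} already delivers a strict inequality when applied to $T^{r}$, that preliminary step can be omitted as you do.
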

\begin{proof} In view of Proposition \ref{Propo:Schatten}, the order condition  $\mu<-n/r,$ $0<r<\infty,$ implies that $T\in S_r(L^2(X)).$ Now, let us prove the converse statement, that is if $T\in S_r(L^2(X))$, then $\mu<-n/r.$  Now, we assume that $r\geq 1$ and that $r\in \mathbb{Z}.$
To prove the theorem let us show that the borderline $\mu=-n/r$ in Lemma \ref{Invariance:trace:2:PSDO} is not possible.  Since $T\in S_r(L^2(X)),$ we have  that
\begin{equation}
    T^r=T\circ T\circ\cdots T\in S_{r}(L^2(X))\circ\cdots\circ S_{r}(L^2(X))\subseteq S_{1}(L^2(X)),
\end{equation} where the composition $\circ$ is taken $r$-times. However, the order of the trace class operator $T^r$ is $$\mu r=-n$$ and this contradicts the conclusion in Theorem \ref{th:trace:class:PSDO}. So, necessarily $\mu<-n/r.$     
\end{proof}

\subsection{Necessary conditions for Fourier integral operators}\label{Necess:Cond:sec}
Now, we are going to analyse the sharpness of the order $\mu<-n/r,$ in Proposition \ref{Propo:Schatten}. Our main result in this section is Theorem \ref{th:trace:class} which we can prove making use of Theorem \ref{th:trace:class:PSDO}.
At the same time, we also prove that Theorem \ref{th:trace:class} is a consequence of the local Weyl formula for Fourier integral operators when $C=\Lambda$ is a real canonical relation. Both approaches will be presented in the proof of Theorem \ref{th:trace:class}. 
Next, we present the local Weyl formula due to Zelditch and Kuznecov \cite{Zelditch:Kuznecov:1992}. Here, $d\mu_L(x,\xi)$ is the usual
Liouville measure. 

\begin{theorem}[Local Weyl formula for Fourier integral operators]\label{T:LWF} Let $\Lambda$ be a real canonical relation and let $F\in I^{0}_{1,0}(X,\Lambda).$ Then
\begin{equation}\label{LWF}
    \lim_{\lambda\rightarrow\infty}\frac{1}{N(\lambda)}\sum_{k: \lambda_k\leq \lambda}(F\phi_k,\phi_k)=\smallint_{S(\Lambda \cap \Delta_{S^*X\times S^*X } )}\sigma_F d\mu_{L},
\end{equation} where $S(\Lambda \cap \Delta_{S^*X\times S^*X } )$ is the set of unit vectors in the diagonal part of $\Lambda.$
    
\end{theorem}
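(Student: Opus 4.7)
Since this is the Zelditch--Kuznecov local Weyl formula for Fourier integral operators, my plan is to proceed by the wave group method, which is the natural extension of the pseudo-differential proof of \eqref{LWF2:PSDO}. Let $U(t) = e^{-it\sqrt{-\Delta_g}}$ denote the half-wave propagator and let $\rho \in \mathcal{S}(\mathbb{R})$ satisfy $\hat\rho \in C_c^{\infty}(\mathbb{R})$ with $\textnormal{supp}(\hat\rho)$ contained in a sufficiently small neighbourhood of $0$, chosen so that no nontrivial closed geodesic contributes to the singular support of the relevant trace. First I would study the smoothed spectral sum
\begin{equation*}
N_F(\lambda,\rho) := \sum_{k}(F\phi_k,\phi_k)\,\rho(\lambda-\lambda_k)
= \int_{\mathbb{R}} \hat\rho(t)\,e^{it\lambda}\,\textnormal{Tr}\bigl(F\,U(-t)\bigr)\,dt,
\end{equation*}
and then recover the statement for sharp cutoffs by a Tauberian argument using the Weyl asymptotics $N(\lambda)=C_n\lambda^n+O(\lambda^{n-1})$.

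\textbf{Key step.} The propagator $U(-t)$ is an FIO of order $0$ whose canonical relation is the graph of the (inverse) geodesic flow $\Phi_{-t}$ on $T^*X\setminus 0$. By H\"ormander's composition theorem, $F U(-t) \in I^{0}_{1,0}(X;\Lambda \circ \textnormal{gr}(\Phi_{-t}))$, with principal symbol obtained by transporting $\sigma_F$ along the geodesic flow. The trace is the integral of the Schwartz kernel along the diagonal, so only points of $\Lambda$ that project to $\Delta_{T^*X\times T^*X}$ matter; after parametrising $F U(-t)$ by a suitable phase function and applying stationary phase in the fibre variables, the contribution coming from $t=0$ yields the principal singularity and produces, after Fourier inversion in $\lambda$, an asymptotic expansion of the form
\begin{equation*}
N_F(\lambda,\rho) = n\,C_n\,\lambda^{n-1}\int_{S(\Lambda\cap\Delta_{S^*X\times S^*X})}\sigma_F\,d\mu_L + O(\lambda^{n-2}).
\end{equation*}
Dividing by $N'(\lambda) \sim nC_n\lambda^{n-1}$ and applying a H\"ormander-type Tauberian theorem (exactly as in the proof of \eqref{LWF2:PSDO:Intro}) turns this smoothed asymptotic into the Ces\`aro average \eqref{LWF}.

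\textbf{Main obstacle.} I expect the delicate point to be the identification in Step 2 of the leading-order contribution with the restriction of $\sigma_F$ against the Liouville measure on the intersection $\Lambda \cap \Delta_{S^*X\times S^*X}$. This requires (i) checking that the intersection of $\Lambda$ with the diagonal is clean in the sense of Duistermaat--Guillemin, so that stationary phase applies with the expected Jacobian factor reducing to $d\mu_L$, and (ii) controlling the off-diagonal contributions from $t \ne 0$ in $\textnormal{supp}(\hat\rho)$, which by propagation of singularities produce only $O(\lambda^{-\infty})$ errors provided $\textnormal{supp}(\hat\rho)$ is small enough to avoid the first nontrivial period of the geodesic flow composed with the geometry of $\Lambda$. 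Once (i)--(ii) are settled, the Tauberian passage is standard and completes the argument.
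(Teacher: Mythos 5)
The paper does not actually prove Theorem~\ref{T:LWF}. It is stated and used as a known result: the authors cite it directly to Zelditch and Kuznecov \cite{Zelditch:Kuznecov:1992}, and it enters the argument purely as a black box from which Lemma~\ref{dyadic:lemma} and the necessary conditions for Schatten membership are derived. So there is no proof in the paper against which your attempt can be compared; you have reconstructed the argument of the cited reference rather than a step of this paper.

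That said, your wave-group/Tauberian outline is indeed the standard route to this formula, and it matches the method of Zelditch--Kuznecov and the Duistermaat--Guillemin tradition. Two remarks on the sketch itself. First, you correctly flag the clean-intersection hypothesis and the control of the off-diagonal singularities of $\textnormal{Tr}(F\,U(-t))$ as the delicate points, but you do not resolve either; as written, the ``Key step'' is a claim, not a derivation, and the coefficient identification against $d\mu_L$ on $S(\Lambda\cap\Delta_{S^*X\times S^*X})$ is precisely where the work lies. Second, there is a dimensional issue the sketch glosses over: the asserted expansion $N_F(\lambda,\rho)\sim n\,C_n\,\lambda^{n-1}\smallint\sigma_F\,d\mu_L$ is only correct when $\Lambda$ meets the diagonal in a set of full dimension $2n-1$ in $S^*X\times S^*X$, i.e.\ when $F$ is microlocally pseudo-differential near the diagonal. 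For a genuine Fourier integral operator whose canonical relation is transverse to the diagonal, $\Lambda\cap\Delta_{S^*X\times S^*X}$ has positive codimension and Liouville measure zero, the singularity of $\textnormal{Tr}(F\,U(-t))$ at $t=0$ is correspondingly weaker, and the un-normalised spectral sum grows like $\lambda^{d}$ for some $d<n$; after dividing by $N(\lambda)\asymp\lambda^n$ the limit is zero. The theorem remains correct in that case (both sides vanish), but your intermediate asymptotic as stated does not, so the argument needs to be split according to whether the clean intersection has excess zero or not.
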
  
The major part of this subsection is dedicated to constructing the proof of the following fact: the Local Weyl formula for Fourier integral operators implies the characterisation of trace class Fourier integral operators in Theorem \ref{th:trace:class}.
The local Weyl formula implies the following result.
\begin{corollary}
     Let $\Lambda$ be a real canonical relation and let $F\in I^{0}_{1,0}(X,\Lambda).$ Then the real part of $F$ and the imaginary part of $F,$ satisfy
\begin{equation}\label{LWF:RePart}
    \lim_{\lambda\rightarrow\infty}\frac{1}{N(\lambda)}\sum_{k: \lambda_k\leq \lambda}(\textnormal{Re}(F)\phi_k,\phi_k)=\smallint_{S(\Lambda \cap \Delta_{S^*X\times S^*X } )}\textnormal{Re}(\sigma_F) d\mu_{L},
\end{equation}
and 
\begin{equation}\label{LWF:ImPart}
    \lim_{\lambda\rightarrow\infty}\frac{1}{N(\lambda)}\sum_{k: \lambda_k\leq \lambda}(\textnormal{Im}(F)\phi_k,\phi_k)=\smallint_{S(\Lambda \cap \Delta_{S^*X\times S^*X } )}\textnormal{Im}(\sigma_F) d\mu_{L},
\end{equation}
where $S(\Lambda \cap \Delta_{S^*X\times S^*X } )$ is the set of unit vectors in the diagonal part of $\Lambda.$
    
\end{corollary}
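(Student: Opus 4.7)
The plan is to deduce this corollary directly from Theorem~\ref{T:LWF} by taking real and imaginary parts of the equation \eqref{LWF}. The key observation is a symbolic identity at the level of quadratic forms: for any bounded operator $F$ on $L^2(X)$ and any function $\phi\in L^2(X)$, one has
\begin{equation*}
    (\textnormal{Re}(F)\phi,\phi)=\tfrac{1}{2}\bigl((F\phi,\phi)+(F^*\phi,\phi)\bigr)=\tfrac{1}{2}\bigl((F\phi,\phi)+\overline{(F\phi,\phi)}\bigr)=\textnormal{Re}((F\phi,\phi)),
\end{equation*}
and analogously $(\textnormal{Im}(F)\phi,\phi)=\textnormal{Im}((F\phi,\phi))$. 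Apply this to each eigenfunction $\phi_k$.

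Next, since the spectral counting sums in Theorem~\ref{T:LWF} are linear over $\mathbb{C}$ and the normalisation $\frac{1}{N(\lambda)}$ is real, the two inner product identities give
\begin{equation*}
    \frac{1}{N(\lambda)}\sum_{k:\lambda_k\leq\lambda}(\textnormal{Re}(F)\phi_k,\phi_k)=\textnormal{Re}\!\left(\frac{1}{N(\lambda)}\sum_{k:\lambda_k\leq\lambda}(F\phi_k,\phi_k)\right),
\end{equation*}
and similarly with $\textnormal{Im}$ in place of $\textnormal{Re}$. By Theorem~\ref{T:LWF}, the complex-valued sequence on the right-hand side converges to $\int_{S(\Lambda\cap\Delta_{S^*X\times S^*X})}\sigma_F\,d\mu_L$ as $\lambda\to\infty$, hence its real and imaginary parts converge to the corresponding real and imaginary parts of this integral. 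Since the integral commutes with taking real and imaginary parts, we obtain \eqref{LWF:RePart} and \eqref{LWF:ImPart}.

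There is essentially no obstacle to this argument; it is a formal consequence of Theorem~\ref{T:LWF} combined with the self-adjoint/anti-self-adjoint decomposition $F=\textnormal{Re}(F)+i\,\textnormal{Im}(F)$. The only point worth emphasising (for later use in the paper) is that $\textnormal{Re}(F)$ and $\textnormal{Im}(F)$ need not themselves belong to $I^{0}_{1,0}(X,\Lambda)$ in general --- the canonical relation of $F^*$ is in principle $\Lambda^{-1}$ --- so one should not expect to apply Theorem~\ref{T:LWF} directly to $\textnormal{Re}(F)$ or $\textnormal{Im}(F)$. The proof circumvents this issue by extracting real and imaginary parts only after the averaging over the spectrum, where $F$ itself (not $F^*$) is being tested against the eigenfunctions of $\sqrt{-\Delta_g}$.
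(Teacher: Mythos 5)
Your proof is correct and takes essentially the same route as the paper: extract real and imaginary parts of \eqref{LWF} using the identity $(\textnormal{Re}(F)\phi,\phi)=\textnormal{Re}((F\phi,\phi))$ and the continuity of $\textnormal{Re}$, $\textnormal{Im}$ under limits. Your closing remark about $\textnormal{Re}(F)$ and $\textnormal{Im}(F)$ not themselves lying in $I^{0}_{1,0}(X,\Lambda)$ (since $F^*$ lives on the inverse canonical relation) is a correct and useful observation, one the paper itself makes later in the proof of Lemma \ref{dyadic:lemma}.
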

\begin{proof}
    The proof of this corollary can be obtained by taking the real part and the imaginary part of \eqref{LWF} in both sides, and using the continuity of these functions one obtain \eqref{LWF:RePart} and \eqref{LWF:ImPart}, respectively. 
\end{proof}

In the next lemma, we are going to estimate the sum $\sum_{\frac{\lambda}{2}<\lambda_k\leq \lambda}(F\phi_k,\phi_k),$ where $F$ is the real or the imaginary part of a Fourier integral operator.

\begin{lemma}\label{dyadic:lemma} Let $\Lambda$ be a real canonical relation and let $S\in I^{0}_{1,0}(X,\Lambda).$ Let us consider
$$F_0:=\textnormal{Re}(S)=\frac{S+S^*}{2}$$ and $$F_1:=\textnormal{Im}(S)=\frac{S-S^*}{2i},$$ being the real and the imaginary part of $S,$ respectively.
Then, for any $\varepsilon>0,$ there exists $\lambda_0({\varepsilon})>0,$ such that for $\lambda\geq 2\lambda_0(\varepsilon),$ one has that
$$\sum_{\frac{\lambda}{2}<\lambda_k\leq \lambda}(F_0\phi_k,\phi_k)$$ 
\begin{equation}
  \geq C_n\lambda^n\left(\left(1-\frac{1}{2^n}\right)\smallint_{S(\Lambda \cap \Delta_{S^*X\times S^*X } )}\textnormal{Re}(\sigma_{S}) d\mu_{L}-\varepsilon\left(1+\frac{1}{2^n}\right) \right)+O(\lambda^{n-1}),
\end{equation}
and also
$$\sum_{\frac{\lambda}{2}<\lambda_k\leq \lambda}(F_1\phi_k,\phi_k)$$ 
\begin{equation}
  \geq C_n\lambda^n\left(\left(1-\frac{1}{2^n}\right)\smallint_{S(\Lambda \cap \Delta_{S^*X\times S^*X } )}\textnormal{Im}(\sigma_{S}) d\mu_{L}-\varepsilon\left(1+\frac{1}{2^n}\right) \right)+O(\lambda^{n-1}),
\end{equation}
where $C_n$ is a geometric constant, and $\ell=0,1.$
\end{lemma}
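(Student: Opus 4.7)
The plan is to mirror the proof of Lemma~\ref{dyadic:lemma:PSDO} almost verbatim, with the local Weyl formula for Fourier integral operators (Theorem~\ref{T:LWF}) taking the role that Theorem~\ref{T:LWF2:PSDO} played there. Since the corollary of Theorem~\ref{T:LWF} already produces the analogues \eqref{LWF:RePart} and \eqref{LWF:ImPart} for $\mathrm{Re}(S)$ and $\mathrm{Im}(S)$, no new analytic input is required; the argument is a purely arithmetic combination of those asymptotics with the Weyl counting estimate $N(\lambda)=C_n\lambda^n+O(\lambda^{n-1})$.

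The first step is to record that $F_0$ and $F_1$ are self-adjoint, so the pairings $(F_\ell\phi_k,\phi_k)$ are real-valued, which legitimises one-sided estimates. Applying \eqref{LWF:RePart} for $\ell=0$ and \eqref{LWF:ImPart} for $\ell=1$, I would fix $\varepsilon>0$ and extract $\lambda_0(\varepsilon)>0$ such that, for every $\lambda>\lambda_0(\varepsilon)$,
$$
-\varepsilon+I_\ell\;\leq\;\frac{1}{N(\lambda)}\sum_{k:\,\lambda_k\leq \lambda}(F_\ell\phi_k,\phi_k)\;\leq\;\varepsilon+I_\ell,
$$
where $I_0:=\smallint_{S(\Lambda\cap\Delta_{S^*X\times S^*X})}\mathrm{Re}(\sigma_S)\,d\mu_L$ and $I_1:=\smallint_{S(\Lambda\cap\Delta_{S^*X\times S^*X})}\mathrm{Im}(\sigma_S)\,d\mu_L$.

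Next, for $\lambda\geq 2\lambda_0(\varepsilon)$ both bounds are available at the scales $\lambda$ and $\lambda/2$. Writing
$$
\sum_{\lambda/2<\lambda_k\leq \lambda}(F_\ell\phi_k,\phi_k)=\sum_{\lambda_k\leq \lambda}(F_\ell\phi_k,\phi_k)-\sum_{\lambda_k\leq \lambda/2}(F_\ell\phi_k,\phi_k),
$$
I would bound the first sum from below by $(C_n\lambda^n+O(\lambda^{n-1}))(-\varepsilon+I_\ell)$ and the second sum from above by $\bigl(\tfrac{C_n}{2^n}\lambda^n+O(\lambda^{n-1})\bigr)(\varepsilon+I_\ell)$, and subtract. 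Collecting the $\lambda^n$ contributions and absorbing all cross terms into a single $O(\lambda^{n-1})$ (which is legitimate because $\varepsilon$ is fixed before the asymptotic in $\lambda$) produces
$$
\sum_{\lambda/2<\lambda_k\leq \lambda}(F_\ell\phi_k,\phi_k)\;\geq\;C_n\lambda^n\Bigl(\bigl(1-\tfrac{1}{2^n}\bigr)I_\ell-\varepsilon\bigl(1+\tfrac{1}{2^n}\bigr)\Bigr)+O(\lambda^{n-1}),
$$
which is the stated conclusion for each $\ell\in\{0,1\}$.

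I do not anticipate any serious obstacle; the argument is a direct transcription of the pseudo-differential case. The only point requiring attention is the bookkeeping of signs, namely ensuring that the \emph{lower} bound is applied at scale $\lambda$ and the \emph{upper} bound at scale $\lambda/2$, so that upon subtraction the $\varepsilon$-term acquires the combined coefficient $(1+1/2^n)$ rather than $(1-1/2^n)$. This asymmetric factor is precisely what appears in the statement and matches the corresponding step in the proof of Lemma~\ref{dyadic:lemma:PSDO}.
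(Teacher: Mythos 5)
Your proposal is correct and follows essentially the same route as the paper: invoke the corollary to Theorem~\ref{T:LWF} to obtain the averaged asymptotics for $\mathrm{Re}(S)$ and $\mathrm{Im}(S)$, fix $\varepsilon$, apply the lower bound at scale $\lambda$ and the upper bound at scale $\lambda/2$, subtract, and combine with $N(\lambda)=C_n\lambda^n+O(\lambda^{n-1})$. Your explicit reliance on the corollary (rather than on the local Weyl formula applied directly to $F_0$ or $F_1$, which do not lie in a single class $I^0_{1,0}(X,\Lambda)$) is exactly the point the paper handles by treating $\sigma_F=\mathrm{Re}(\sigma_S)$ as a function rather than a symbol, so the two arguments are the same in substance.
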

\begin{proof} We proceed with the proof with case of $F=F_0,$ and by abuse of notation let us write
$$\sigma_F=\textnormal{Re}(\sigma_{S}).$$ At this point we think of $\sigma_F$ as a function and not as the symbol of a Fourier integral operator. Indeed, $\textnormal{Re}(F)$ is the sum of two Fourier integral operators with different canonical relations.  
     Note that the fact that $F$ is self-adjoint implies that $(F\phi_k,\phi_k)$ is real-valued and then the right-hand side of \eqref{LWF} is real-valued.  Let us fix $\varepsilon>0.$ There exists $\lambda_0(\varepsilon)$ such that for any $\lambda> \lambda_0(\varepsilon)$ we have that
     \begin{equation}
         -\varepsilon+\smallint_S \sigma_F d\mu_{L}\leq \frac{1}{N(\lambda)}\sum_{k: \lambda_k\leq \lambda}(F\phi_k,\phi_k)\leq \varepsilon+\smallint_S \sigma_F d\mu_{L},
     \end{equation}
where we have written
$$\smallint_S \sigma_F d\mu_{L} = \smallint_{S(\Lambda \cap \Delta_{S^*X\times S^*X } )}\sigma_F d\mu_{L} $$ by denoting $S=S(\Lambda \cap \Delta_{S^*X\times S^*X } )$  to simplify the notation. At this point we recall the Weyl formula
$$ N(\lambda)=C_n\lambda^n+O(\lambda^{n-1}) .$$
In particular, if $\lambda>2\lambda_0(\varepsilon)$ we have the inequality
\begin{equation}
    \frac{1}{N(\lambda/2)}\sum_{k: \lambda_k\leq \lambda/2}(F\phi_k,\phi_k)\leq \varepsilon+\smallint_S \sigma_Fd\mu_L,
\end{equation} from which we have that 
$$ -\sum_{k: \lambda_k\leq \lambda/2}(F\phi_k,\phi_k)\geq -N(\lambda/2)\left(\varepsilon+\smallint_S \sigma_Fd\mu_L\right) $$
$$=-\left( \frac{C_n}{2^n}\lambda^n+\frac{1}{2^{n-1}}O(\frac{1}{2^{n-1}}\lambda^{n-1})\right)\left(\varepsilon+\smallint_S \sigma_Fd\mu_L\right).$$ On the other hand note that the inequality
$$ \frac{1}{N(\lambda)}\sum_{k: \lambda_k\leq \lambda}(F\phi_k,\phi_k) \geq -\varepsilon+\smallint_S \sigma_F d\mu_{L}  $$
can be written as follows
\begin{equation}
   \sum_{k: \lambda_k\leq \lambda}(F\phi_k,\phi_k)\geq \left(C_n\lambda^n+O(\lambda^{n-1})\right) \left( -\varepsilon+\smallint_S \sigma_F d\mu_{L}\right).
\end{equation} Now let us estimate from below the contribution of the terms $(T\phi_k,\phi_k)$ in the region
$$\left\{\lambda_k: {\lambda}/{2}<\lambda_k\leq \lambda\right\}.$$ Note that
\begin{align*}
    \sum_{k:\frac{\lambda}{2}<\lambda_k\leq \lambda}(F\phi_k,\phi_k) &= \sum_{k: \lambda_k\leq \lambda}(F\phi_k,\phi_k)-\sum_{k: \lambda_k\leq \lambda/2}(F\phi_k,\phi_k)\\
    &\geq \left(C_n\lambda^n+O(\lambda^{n-1})\right) \left( -\varepsilon+\smallint_S \sigma_F d\mu_{L}\right)\\
    &-\left( \frac{C_n}{2^n}\lambda^n+\frac{1}{2^{n-1}}O(\frac{1}{2^{n-1}}\lambda^{n-1})\right)\left(\varepsilon+\smallint_S \sigma_Fd\mu_L\right)\\
    &=C_n\lambda^{n}\left( -\varepsilon+\smallint_S \sigma_F d\mu_{L}\right)+O(\lambda^{n-1})\left( -\varepsilon+\smallint_S \sigma_F d\mu_{L}\right)\\
    &-\frac{C_n}{2^n}\lambda^n\left(\varepsilon+\smallint_S \sigma_Fd\mu_L\right)+O(\frac{1}{2^{n-1}}\lambda^{n-1})\left(\varepsilon+\smallint_S \sigma_Fd\mu_L\right)\\
    &=C_n\lambda^{n}\left( -\varepsilon+\smallint_S \sigma_F d\mu_{L}\right)-\frac{C_n}{2^n}\lambda^n\left(\varepsilon+\smallint_S \sigma_Fd\mu_L\right)
    +O(\lambda^{n-1})\\
    &=C_{n}\lambda^n\left(\smallint_S \sigma_F d\mu_{L}\left(1-\frac{1}{2^n}\right)-\varepsilon\left(1+\frac{1}{2^n}\right)\right)+O(\lambda^{n-1}).
\end{align*}The case of $F_1$ can be proved in a similar way. The proof is complete.     
\end{proof}
  
Now, we have the following auxiliary proposition. It is a partial characterisation of trace class Fourier integral operators. 
\begin{proposition}\label{propo} 
Let $X$ be a compact manifold of dimension $n.$  Let us consider a Fourier integral operator  $T\in  I^\mu_{1,0}(X;\Lambda) .$ Then $T:L^{2}(X)\rightarrow L^{2}(X)$ belongs to the Schatten class $S_1(L^2(X)),$  namely, $T$ is of trace class on $L^2,$ if and only if  $\mu <-n,$ provided that at least one of the following averages 
\begin{equation}\label{positivity}
    \smallint_{S(\Lambda \cap \Delta_{S^*X\times S^*X } )}\textnormal{Re}(\sigma_{T\sqrt{1-\Delta_g}^{-\mu}}) d\mu_{L}, \quad \smallint_{S(\Lambda \cap \Delta_{S^*X\times S^*X } )}\textnormal{Im}(\sigma_{T\sqrt{1-\Delta_g}^{-\mu}})  d\mu_{L},
\end{equation}
is strictly positive.    
\end{proposition}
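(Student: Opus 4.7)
The plan is to follow the strategy used in the proof of Proposition~\ref{propo:PSDO}, replacing the role of the local Weyl formula for pseudo-differential operators and Lemma~\ref{dyadic:lemma:PSDO} by their Fourier-integral analogs (Theorem~\ref{T:LWF} and Lemma~\ref{dyadic:lemma}, respectively). The ``if'' direction $\mu<-n\Rightarrow T\in S_1(L^2(X))$ is exactly Proposition~\ref{Propo:Schatten} for $r=1$, so the content lies entirely in the converse, which is where the positivity hypothesis enters.

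Assume $T$ is of trace class. Without loss of generality suppose that $\smallint_{S(\Lambda\cap\Delta_{S^*X\times S^*X})}\textnormal{Re}(\sigma_{T\sqrt{1-\Delta_g}^{-\mu}})\,d\mu_{L}>0$ (the case of positive imaginary average is completely symmetric). Introduce the auxiliary Fourier integral operator
\begin{equation*}
F:=T\sqrt{1-\Delta_g}^{-\mu}\in I^{0}_{1,0}(X;\Lambda),
\end{equation*}
which is of order zero. Using the eigenvalue identity $\sqrt{1-\Delta_g}^{-\mu}\phi_k=\lambda_k^{-\mu}\phi_k$, one gets
\begin{equation*}
(T\phi_k,\phi_k)=\lambda_k^{\mu}(F\phi_k,\phi_k),\qquad (T^{*}\phi_k,\phi_k)=\lambda_k^{\mu}(F^{*}\phi_k,\phi_k),
\end{equation*}
and hence, since $\textnormal{Re}(T)=(T+T^{*})/2$ is self-adjoint and also of trace class,
\begin{equation*}
(\textnormal{Re}(T)\phi_k,\phi_k)=\lambda_k^{\mu}(\textnormal{Re}(F)\phi_k,\phi_k).
\end{equation*}

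The next step is to decompose the trace of $\textnormal{Re}(T)$ into dyadic spectral blocks,
\begin{equation*}
\textnormal{Tr}(\textnormal{Re}(T))=\sum_{k}(\textnormal{Re}(T)\phi_k,\phi_k)\asymp\sum_{s=0}^{\infty}2^{s\mu}\sum_{2^{s-1}<\lambda_k\leq 2^{s}}(\textnormal{Re}(F)\phi_k,\phi_k),
\end{equation*}
and to apply Lemma~\ref{dyadic:lemma} with $S=F$. The positivity of $\smallint\textnormal{Re}(\sigma_F)\,d\mu_L$ allows one to pick $\varepsilon>0$ so small that
\begin{equation*}
C_\varepsilon:=\left(1-\tfrac{1}{2^{n}}\right)\smallint_{S(\Lambda\cap\Delta_{S^*X\times S^*X})}\textnormal{Re}(\sigma_F)\,d\mu_{L}-\varepsilon\left(1+\tfrac{1}{2^{n}}\right)>0,
\end{equation*}
so that for all sufficiently large $s$,
\begin{equation*}
\sum_{2^{s-1}<\lambda_k\leq 2^{s}}(\textnormal{Re}(F)\phi_k,\phi_k)\geq C_{n}C_\varepsilon 2^{sn}+O(2^{s(n-1)}).
\end{equation*}
Substituting back yields $\textnormal{Tr}(\textnormal{Re}(T))\gtrsim\sum_{s}2^{s(n+\mu)}$, and this series diverges whenever $n+\mu\geq 0$; since $\textnormal{Re}(T)$ was trace class, its trace is finite, forcing $\mu<-n$ as claimed.

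The main obstacle is essentially a bookkeeping one: one must be sure that the average of the principal symbol of $\textnormal{Re}(F)$ over $S(\Lambda\cap\Delta_{S^*X\times S^*X})$ equals $\smallint\textnormal{Re}(\sigma_F)\,d\mu_{L}$, despite $\textnormal{Re}(F)=(F+F^{*})/2$ being a sum of two Fourier integral operators whose canonical relations differ (the second being the reflected relation of $F^{*}$). This is already absorbed into the statement of Lemma~\ref{dyadic:lemma}, so the argument reduces to assembling the ingredients above. In the alternative situation in which only the imaginary average is positive, one runs the same reasoning with $\textnormal{Im}(T)=(T-T^{*})/(2i)$ and $\textnormal{Im}(F)$ in place of their real parts.
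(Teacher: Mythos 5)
Your proposal is correct and follows essentially the same route as the paper: the ``if'' direction via Proposition~\ref{Propo:Schatten}, the reduction to the zero-order operator $F=T\sqrt{1-\Delta_g}^{-\mu}$, the identity $(\textnormal{Re}(T)\phi_k,\phi_k)=\lambda_k^{\mu}(\textnormal{Re}(F)\phi_k,\phi_k)$, dyadic decomposition of the trace, and the lower bound from Lemma~\ref{dyadic:lemma} (applied with $S=F$) combined with the positivity hypothesis to force divergence unless $\mu<-n$. The only cosmetic difference is that the paper introduces $F_0=\textnormal{Re}(T)$, $F_1=\textnormal{Im}(T)$ up front and spells out the intertwining identities more verbosely, but the substance is identical.
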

\begin{proof}
     In view of Proposition \ref{Propo:Schatten}, the order condition $\mu<-n,$ implies that $T\in S_1(L^2(X))$ (i.e. $T$ is of trace class on $L^{2}(X)$). Now, let us consider  $T\in  I^\mu_{1,0}(X;\Lambda) $  to be of trace class. Since $$\textnormal{Re}(T)=\frac{T+T^*}{2}$$ and $$\textnormal{Im}(T)=\frac{T-T^*}{2i},$$ are also of trace class we have that 
    \begin{equation}
        \textnormal{Tr}(T)= \textnormal{Tr}(\textnormal{Re}(T))+i\textnormal{Tr}(\textnormal{Im}(T)).
    \end{equation} Let $F_0=\textnormal{Re}(T),$ and $F_1=\textnormal{Im}(T).$ Note that each $F_\ell$ is self-adjoint on $L^{2}(X),$ where $\ell=0,1.$ Let us estimate the trace $\textnormal{Tr}(F_\ell).$ Indeed,  if $(\phi_k,\lambda_k)$ are the spectral data of the operator $\sqrt{1-\Delta_g},$ 
we have that 
\begin{align*}
    \textnormal{Tr}(F_\ell)=\sum_{\lambda_k}(F_\ell\phi_k,\phi_k) &\asymp \sum_{s=0}^{\infty}\sum_{2^{s-1}<\lambda_k\leq 2^{s}  } (F_\ell\phi_k,\phi_k)\\
    &= \sum_{s=0}^{\infty}\sum_{2^{s-1}<\lambda_k\leq 2^{s}  } \lambda_k^{\mu}(F_\ell \lambda_k^{-\mu}\phi_k,\phi_k)\\
     &= \sum_{s=0}^{\infty}\sum_{2^{s-1}<\lambda_k\leq 2^{s}  } \lambda_k^{\mu}(F_\ell \sqrt{1-\Delta_g})^{-\mu}\phi_k,\phi_k),
\end{align*}which leads to the estimate
\begin{equation}\label{estimate:trace}
     \textnormal{Tr}(F_\ell)=\sum_{\lambda_k}(F_\ell\phi_k,\phi_k)\asymp \sum_{s=0}^{\infty}\sum_{2^{s-1}<\lambda_k\leq 2^{s}  } \lambda_k^{\mu}(F_\ell \sqrt{1-\Delta_g})^{-\mu}\phi_k,\phi_k).
\end{equation}
Note also that
\begin{align*}
    (\textnormal{Re}(T\sqrt{1-\Delta_g}^{-\mu})\phi_k,\phi_k  ) &=\frac{(T\sqrt{1-\Delta_g}^{-\mu}\phi_k,\phi_k)+(\sqrt{1-\Delta_g}^{-\mu}T^*\phi_k,\phi_k)}{2}\\
    &= \frac{(T\lambda_k^{-\mu}\phi_k,\phi_k)+(T^*\phi_k,\sqrt{1-\Delta_g}^{-\mu}\phi_k)}{2}\\
     &= \frac{(T\lambda_k^{-\mu}\phi_k,\phi_k)+(T^*\phi_k,\lambda_k^{-\mu}\phi_k)}{2}\\
      &= \lambda_k^{-\mu}\frac{(T\phi_k,\phi_k)+(T^*\phi_k,\phi_k)}{2}\\
      &= \lambda_k^{-\mu}(\textnormal{Re}(T)\phi_k,\phi_k)\\
      &=(\textnormal{Re}(T)\sqrt{1-\Delta_g}^{-\mu}\phi_k,\phi_k  )
\end{align*} from which we deduce that 
\begin{align*}
     (\textnormal{Re}(T\sqrt{1-\Delta_g}^{-\mu})\phi_k,\phi_k  )=(\textnormal{Re}(T)\sqrt{1-\Delta_g}^{-\mu})\phi_k,\phi_k  )=(F_0\sqrt{1-\Delta_g}^{-\mu})\phi_k,\phi_k  ).
\end{align*}
We have also that
\begin{align*}
    (\textnormal{Im}(T\sqrt{1-\Delta_g}^{-\mu})\phi_k,\phi_k  ) &=\frac{(T\sqrt{1-\Delta_g}^{-\mu}\phi_k,\phi_k)-(\sqrt{1-\Delta_g}^{-\mu}T^*\phi_k,\phi_k)}{2i}\\
    &= \frac{(T\lambda_k^{-\mu}\phi_k,\phi_k)-(T^*\phi_k,\sqrt{1-\Delta_g}^{-\mu}\phi_k)}{2i}\\
     &= \frac{(T\lambda_k^{-\mu}\phi_k,\phi_k)-(T^*\phi_k,\lambda_k^{-\mu}\phi_k)}{2i}\\
      &= \lambda_k^{-\mu}\frac{(T\phi_k,\phi_k)-(T^*\phi_k,\phi_k)}{2i}\\
      &= \lambda_k^{-\mu}(\textnormal{Im}(T)\phi_k,\phi_k)\\
      &=(\textnormal{Im}(T)\sqrt{1-\Delta_g}^{-\mu}\phi_k,\phi_k  ),
\end{align*} from which we deduce that 
\begin{align*}
     (\textnormal{Im}(T\sqrt{1-\Delta_g}^{-\mu})\phi_k,\phi_k  )=(\textnormal{Im}(T)\sqrt{1-\Delta_g}^{-\mu})\phi_k,\phi_k  )=(F_1\sqrt{1-\Delta_g}^{-\mu})\phi_k,\phi_k  ).
\end{align*}
Summarising the analysis above we have obtained the following equalities
\begin{equation}\label{Re:identity}
     (\textnormal{Re}(T\sqrt{1-\Delta_g}^{-\mu})\phi_k,\phi_k  )=(F_0\sqrt{1-\Delta_g}^{-\mu})\phi_k,\phi_k  ),
\end{equation} and
\begin{equation}\label{Im:identity}
     (\textnormal{Im}(T\sqrt{1-\Delta_g}^{-\mu})\phi_k,\phi_k  )=(F_1\sqrt{1-\Delta_g}^{-\mu})\phi_k,\phi_k  ).
\end{equation}
Remember that, from Lemma \ref{dyadic:lemma} applied to $S= T\sqrt{1-\Delta_g}^{-\mu},$ there exists $\lambda_0(\varepsilon)$ such that for $\lambda\geq \lambda_0(\varepsilon), $ we have the estimate from below
$$\sum_{\frac{\lambda}{2}<\lambda_k\leq \lambda}(\textnormal{Re}(T\sqrt{1-\Delta_g}^{-\mu})\phi_k,\phi_k)$$ 
\begin{equation}
  \geq C_n\lambda^n\left(\left(1-\frac{1}{2^n}\right)\smallint_{S(\Lambda \cap \Delta_{S^*X\times S^*X } )}\textnormal{Re}(\sigma_{S}) d\mu_{L}-\varepsilon\left(1+\frac{1}{2^n}\right) \right)+O(\lambda^{n-1}).
  \end{equation}
Without loss of generality we can assume that 
$$\smallint_{S(\Lambda \cap \Delta_{S^*X\times S^*X } )}\textnormal{Re}(\sigma_S) d\mu_{L}>0.$$
We can choose $\varepsilon>0$ such that 
$$ C_\varepsilon=\left(1-\frac{1}{2^n}\right)\smallint_{S}\textnormal{Re}(\sigma_S) d\mu_{L}-\varepsilon\left(1+\frac{1}{2^n}\right)>0.$$ Indeed, such a $\varepsilon$ satisfies the inequality
$$0<\varepsilon< \frac{\left(1-\frac{1}{2^n}\right)}{\left(1+\frac{1}{2^n}\right)}\smallint_{S}\textnormal{Re}(\sigma_S).$$
 In consequence we have that
 \begin{equation}\label{estimate:from:below}
     \sum_{2^{s-1}<\lambda_k\leq 2^{s}}(\textnormal{Re}(T\sqrt{1-\Delta_g}^{-\mu})\phi_k,\phi_k)\geq  C_nC_\varepsilon 2^{sn}+O(2^{s(n-1)})
 \end{equation}
where $C_n$ is a geometric constant and $C_\varepsilon>0$. Note that, in view of \eqref{Re:identity},  we have the identity $$ 
     (\textnormal{Re}(T\sqrt{1-\Delta_g}^{-\mu})\phi_k,\phi_k  )=(F_0\sqrt{1-\Delta_g}^{-\mu})\phi_k,\phi_k  ).$$  Using this fact as well as the equality in \eqref{estimate:trace}  we can estimate the trace of $F_0$ as follows,
\begin{align*}
     \textnormal{Tr}(F_0) &\asymp \sum_{s=0}^{\infty}\sum_{2^{s-1}<\lambda_k\leq 2^{s}  } \lambda_k^{\mu}(F_0 \sqrt{1-\Delta_g})^{-\mu}\phi_k,\phi_k)\\
     &\asymp \sum_{s=0}^{\infty}\sum_{2^{s-1}<\lambda_k\leq 2^{s}  } 2^{s\mu}(T_0 \sqrt{1-\Delta_g})^{-\mu}\phi_k,\phi_k)\\
     &= \sum_{s=0}^{\infty}\sum_{2^{s-1}<\lambda_k\leq 2^{s}  } 2^{s\mu}(\textnormal{Re}(T \sqrt{1-\Delta_g})^{-\mu})\phi_k,\phi_k)\\
     &\gg_{\varepsilon} \sum_{s=0}^{\infty} (C_nC_\varepsilon 2^{s(n+\mu)}+O(2^{s(\mu+n-1)})),
\end{align*} where in the last line we have used \eqref{estimate:from:below}. If the order $\mu$ is such that  $\mu\geq -n,$ the  series $$ \sum_{s=0}^{\infty} (C_nC_\varepsilon 2^{s(n+\mu)}+O(2^{s(\mu+n-1)})),$$ diverges which contradicts that $T_0$ is of trace class. In consequence, we have that $\mu<-n.$ The proof is complete.  
\end{proof}

\begin{theorem}\label{th:trace:class} Let $X$ be a compact manifold of dimension $n.$  Let us consider a Fourier integral operator  $T\in  I^\mu_{1,0}(X;\Lambda) .$ Then $T:L^{2}(X)\rightarrow L^{2}(X)$ belongs to the Schatten class $S_1(L^2(X)),$  namely, $T$ is of trace class on $L^2,$ if and only if  $\mu <-n.$ 
    
\end{theorem}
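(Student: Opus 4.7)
The plan splits into sufficiency and necessity. The sufficiency ($\mu<-n \Rightarrow T\in S_1(L^2(X))$) is exactly the $r=1$ case of Proposition \ref{Propo:Schatten}, so nothing further is needed in that direction. The substance of the argument is the converse: assuming $T\in S_1(L^2(X))$, I want to deduce $\mu<-n$. Proposition \ref{propo} already delivers this conclusion once at least one of the two averages
\begin{equation*}
   \smallint_{S(\Lambda\cap \Delta_{S^*X\times S^*X})}\textnormal{Re}(\sigma_{T\sqrt{1-\Delta_g}^{-\mu}})\, d\mu_L,\quad \smallint_{S(\Lambda\cap \Delta_{S^*X\times S^*X})}\textnormal{Im}(\sigma_{T\sqrt{1-\Delta_g}^{-\mu}})\, d\mu_L
\end{equation*}
is strictly positive, so the whole game is to remove that positivity hypothesis, in exact analogy with the passage from Proposition \ref{propo:PSDO} to Theorem \ref{th:trace:class:PSDO} in the pseudo-differential case.

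To remove positivity, I would compose $T$ on the left with a carefully chosen zero-order pseudo-differential operator and form $F:=T+\varkappa\, AT$ for $A\in\Psi^0_{1,0}(X)$ and a parameter $\varkappa>0$ to be fixed. Three facts make this construction fruitful: (i) $F\in I^\mu_{1,0}(X;\Lambda)$, because composition with a PDO on the left preserves the class and the canonical relation; (ii) $F\in S_1(L^2(X))$, because $A$ is $L^2$-bounded and $T\in S_1$; and (iii) the principal symbol of $F$ is $\sigma_F=(1+\varkappa\,\sigma_A)\sigma_T$. When $\sigma_T$ does not vanish identically on $S(\Lambda\cap \Delta_{S^*X\times S^*X})$, choosing $\sigma_A$ proportional to $\overline{\sigma_T}\,|\xi|^{-\mu}$ (which is homogeneous of degree $0$) makes the symbol of $F\sqrt{1-\Delta_g}^{-\mu}$ equal (in top order) to $\sigma_T|\xi|^{-\mu}+\varkappa\,|\sigma_T|^2 |\xi|^{-2\mu}$. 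Its real part then has strictly positive integral over $S(\Lambda\cap \Delta_{S^*X\times S^*X})$ for all $\varkappa$ large enough, regardless of the sign of the original average. Applying Proposition \ref{propo} to $F$ therefore yields $\mu<-n$; when $\textnormal{Re}(\sigma_T)$ happens to vanish on the diagonal set, the same construction is carried out with the imaginary part.

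A second route, paralleling the pseudo-differential case, reduces directly to Theorem \ref{th:trace:class:PSDO}. Since $T^*T\in \Psi^{2\mu}_{1,0}(X)$ and $T\in S_1$ forces $T^*T\in S_{1/2}\subset S_1$, a naive application of Theorem \ref{th:trace:class:PSDO} gives $2\mu<-n$, which is only $\mu<-n/2$; the sharpening to $\mu<-n$ is obtained by exploiting the full strength $T^*T\in S_{1/2}$, for instance through $|T|=\sqrt{T^*T}\in\Psi^\mu_{1,0}(X)$ in the elliptic case, to which Theorem \ref{th:trace:class:PSDO} then applies directly. The main obstacle in both approaches is the degenerate situation where $\sigma_T$ vanishes identically on $S(\Lambda\cap \Delta_{S^*X\times S^*X})$: the positivity-enhancing modification in the first approach becomes trivial, and $T^*T$ is no longer elliptic so $|T|$ need not be a PDO in the second. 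I expect this edge case to require a microlocal partition of unity argument, or an induction on the order based on sub-principal symbols, reducing matters to a non-degenerate situation in which the construction above can be executed.
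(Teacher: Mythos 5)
Your decomposition into sufficiency (trivial) and necessity, and your two candidate strategies --- a positivity-enhancing modification of $T$ followed by Proposition \ref{propo}, or a reduction to the pseudo-differential Theorem \ref{th:trace:class:PSDO} via $T^{*}T$ --- both identify the right terrain, but neither closes as stated.

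The gap in your first route is precisely the degenerate case you flag: for an FIO the averaging set in Proposition \ref{propo} is $S(\Lambda\cap\Delta_{S^{*}X\times S^{*}X})$, a much thinner set than the full co-sphere bundle used in the PDO case, and $T\neq 0$ does \emph{not} force $\sigma_{T}$ to be nonzero on this fixed-point set. The construction $F=T+\varkappa AT$ then becomes inert there, and there is no general way to manufacture positivity out of a symbol that actually vanishes on the set being averaged. Note also a smaller technical issue: to take $\sigma_{A}\propto \overline{\sigma_{T}}|\xi|^{-\mu}$ as a PDO symbol you need to pull $\sigma_{T}$ back from $\Lambda$ to $T^{*}X$, which requires $\Lambda$ to be a local graph; that is fine here but should be said.

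The gap in your second route is the loss from $\mu<-n/2$ to $\mu<-n$. Applying Theorem \ref{th:trace:class:PSDO} to $T^{*}T\in\Psi^{2\mu}_{1,0}(X)\cap S_{1}$ can only yield $2\mu<-n$; the repair via $|T|=\sqrt{T^{*}T}$ requires ellipticity (so that the square root is again a PDO of order $\mu$), which is not assumed. You correctly sense that exploiting $T^{*}T\in S_{1/2}$ ``more fully'' should help, but the missing idea is not to strengthen the Schatten index: it is to \emph{renormalize the order}. The paper's proof applies Theorem \ref{th:trace:class:PSDO} not to $T^{*}T$ but to
\begin{equation*}
F:=T^{*}T\,\sqrt{1-\Delta_{g}}^{\,-\mu}\in\Psi^{\mu}_{1,0}(X),
\end{equation*}
which has order $\mu$, not $2\mu$. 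Crucially, $F$ is trace class because of the factorization $F=T^{*}\bigl(T\sqrt{1-\Delta_{g}}^{\,-\mu}\bigr)$: the operator $T^{*}$ is trace class (since $T$ is), and $T\sqrt{1-\Delta_{g}}^{\,-\mu}$ is an FIO of order zero and hence merely $L^{2}$-bounded; the ideal property of $S_{1}$ does the rest. Theorem \ref{th:trace:class:PSDO} then delivers $\mu<-n$ in one step, with no ellipticity hypothesis and no positivity discussion at all, thereby dissolving your edge case rather than treating it. (The paper's alternative argument via Proposition \ref{propo} also passes through this same $F$, and since $F$ is a PDO its averaging set is all of $T^{*}\mathbb{S}(X)$, so $|\sigma_{T}|^{2}|\det D\Phi|\,\|\theta\|^{-\mu}$ has strictly positive integral as soon as $T\neq 0$ --- which is the reason the degenerate case never arises there either.)
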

\begin{proof} In view of Proposition \ref{Propo:Schatten}, the order condition $\mu<-n,$ implies that $T\in S_1(L^2(X))$ (i.e. $T$ is of trace class on $L^{2}(X)$).  On the other hand, assume that $T$ having order $\mu$ is of trace class. Without loss of generality we assume that $T\neq 0.$ The idea is to find a pseudo-differential operator $F$ of order $\mu,$ of trace class, since applying Theorem \ref{th:trace:class:PSDO}  we could prove that $\mu<-n.$ To do this we define
$$F=T^*T \sqrt{1-\Delta_g}^{-\mu}.$$ Note that $T^*T\in \Psi^{2\mu}_{1,0}(X)$ is  a pseudo-differential operator of order $2\mu,$ and then $T^*T \sqrt{1-\Delta_g}^{-\mu}$ is a pseudo-differential operator of order $\mu.$ Since $T$ is of trace class, we also have that $T^*$ is of class trace. Since $T \sqrt{1-\Delta_g}^{-\mu}$ is a Fourier integral operator of order zero, and then it is a bounded operator on $L^2(X),$ we have that  $F=T^*T \sqrt{1-\Delta_g}^{-\mu}$ is a trace class pseudo-differential operator of order $\mu$. Then, the inequality $\mu<-n$ follows from  Theorem \ref{th:trace:class:PSDO}.  The proof of Theorem \ref{th:trace:class} is complete. However, let us give an alternative proof for the theorem based in Proposition \ref{propo}. For this we consider again the operator $F=T^*T \sqrt{1-\Delta_g}^{-\mu}$ which is a trace class pseudo-differential operator of order $\mu.$ The idea is to prove that
$$  \smallint_{S(\Lambda \cap \Delta_{S^*X\times S^*X } )}\textnormal{Re}(\sigma_{F\sqrt{1-\Delta_g}^{-\mu}}) d\mu_{L}>0.$$ Note that
$$F\sqrt{1-\Delta_g}^{-\mu}=T^*T \sqrt{1-\Delta_g}^{-2\mu}.$$ The principal symbol of $T^*T$ is given by 
\begin{equation}
   \vert \sigma_T(x,y,\theta)\vert^2\vert \textnormal{Det}   (D\Phi)\vert,
\end{equation}where $\Phi$ is the phase function of $T,$ see e.g. Zelditch \cite[Page 145]{Zelditch}. In consequence the principal symbol of $F\sqrt{1-\Delta_g}^{-\mu}$ is given by
$$ \vert \sigma_T(x,y,\theta)\vert^2\vert\textnormal{Det}   (D\Phi)\vert\Vert\theta \Vert^{-\mu},$$ when the co-variable $\theta$ is such that $\theta\neq 0.$ In consequence, we have proved that
$$  \smallint_{S(\Lambda \cap \Delta_{S^*X\times S^*X } )}\textnormal{Re}(\sigma_{F\sqrt{1-\Delta_g}^{-\mu}}) d\mu_{L}=\smallint_{S(\Lambda \cap \Delta_{S^*X\times S^*X } )}\vert \sigma_T(x,y,\theta)\vert^2\vert D(\Phi)\vert\Vert\theta \Vert^{-\mu} d\mu_{L}>0,$$ where we have used that $\sigma_T\not\equiv 0,$ because $T\neq 0.$
The proof is complete.
\end{proof}
For any $r>1,$ we provide the following sufficient condition on the order of a Fourier integral operator in order to guarantee its membership in the Schatten class of order $r>0.$

\begin{lemma}\label{Invariance:trace:2} Let $X$ be a compact manifold of dimension $n.$  Let us consider a Fourier integral operator  $A\in  I^\mu_{1,0}(X;\Lambda) $ and let $r>1.$  If $A\in S_r(L^2(X))$ then $m\leq -n/r.$
\end{lemma}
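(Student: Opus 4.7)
The proof will mirror the strategy of Lemma \ref{Invariance:trace:2:PSDO}, but replacing the appeal to Theorem \ref{th:trace:class:PSDO} with that of Theorem \ref{th:trace:class}. The plan is to reduce to the trace class case by composing $A$ with a suitable negative power of $\sqrt{1-\Delta_g}$ chosen so that the resulting Fourier integral operator lands in $S_1(L^2(X))$ by the Schatten–H\"older inequality.

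More precisely, fix $\varepsilon>0$ and let $q=r/(r-1)$ be the conjugate exponent of $r$, so that $1/r+1/q=1$ and $q>1$ since $r>1$. Choose
\begin{equation*}
s:=-\frac{n}{q}-\varepsilon,
\end{equation*}
so that $\sqrt{1-\Delta_g}^{\,s}$ is a classical elliptic pseudo-differential operator of order $s<-n/q$, and hence belongs to $S_q(L^2(X))$ by the pseudo-differential version of Proposition \ref{Propo:Schatten}. Define
\begin{equation*}
\tilde A:=A\,\sqrt{1-\Delta_g}^{\,s}.
\end{equation*}
Since composition of $A\in I^\mu_{1,0}(X;\Lambda)$ with a pseudo-differential operator of order $s$ produces a Fourier integral operator with the same canonical relation $\Lambda$ and order $\mu+s$, we have $\tilde A\in I^{\mu+s}_{1,0}(X;\Lambda)$.

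By the H\"older inequality for Schatten–von Neumann ideals applied to $A\in S_r(L^2(X))$ and $\sqrt{1-\Delta_g}^{\,s}\in S_q(L^2(X))$, together with the fact that $1/r+1/q=1$, we conclude that $\tilde A\in S_1(L^2(X))$, namely $\tilde A$ is of trace class on $L^2(X)$. Invoking Theorem \ref{th:trace:class} for the Fourier integral operator $\tilde A$ we deduce that its order satisfies the strict inequality $\mu+s<-n$. Substituting the value of $s$ we obtain
\begin{equation*}
\mu<-s-n=\frac{n}{q}+\varepsilon-n=n\left(\frac{1}{q}-1\right)+\varepsilon=-\frac{n}{r}+\varepsilon.
\end{equation*}
Since $\varepsilon>0$ was arbitrary, letting $\varepsilon\to 0^{+}$ yields $\mu\leq -n/r$, as required.

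The only possibly delicate point is the statement that the composition of $A\in I^\mu_{1,0}(X;\Lambda)$ with the pseudo-differential operator $\sqrt{1-\Delta_g}^{\,s}$ lies in $I^{\mu+s}_{1,0}(X;\Lambda)$; this is a standard fact about the calculus of Fourier integral operators, following from the symbolic calculus applied to the amplitude, and therefore does not raise any serious obstacle. The rest of the argument is a direct transcription of the pseudo-differential case combined with the newly established Theorem \ref{th:trace:class}.
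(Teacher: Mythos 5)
Your proof is correct and follows essentially the same argument as the paper: set $q=r/(r-1)$, compose $A$ with $\sqrt{1-\Delta_g}^{\,s}$ for $s=-n/q-\varepsilon$, apply Schatten–H\"older to land in $S_1$, invoke Theorem~\ref{th:trace:class} to conclude $\mu+s<-n$, and let $\varepsilon\to 0^{+}$. The additional remark you make — that the composition is a Fourier integral operator in $I^{\mu+s}_{1,0}(X;\Lambda)$ so that Theorem~\ref{th:trace:class} genuinely applies — is a point the paper's own proof leaves implicit, so your write-up is if anything slightly more careful.
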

\begin{proof}
    Let $\varepsilon>0.$ Consider the operator $\sqrt{1-\Delta}^s$ where $s$ satisfies
$$ s:=-\frac{n}{q}-\varepsilon<-\frac{n}{q},$$ and where $$q:=r/(r-1)$$ is the conjugate exponent of $r.$ Then, $q$ is given by the identity 
$1=\frac{1}{q}+\frac{1}{r}.$ Note that since $r>1,$ one has that $q>1.$ Note also that $$\sqrt{1-\Delta}^s\in S_q(L^2(X)).$$ Since $q$ and $r$ and conjugate exponents, we have that
\begin{equation*}
    \tilde{A}:=A\sqrt{1-\Delta}^s\in S_1(L^2(X)).
\end{equation*}By Theorem \ref{th:trace:class} we have that the order $\mu+s$ of $\tilde A\in S_1$ satisfies the inequality 
$\mu+s<-n.$ Then, we have that 
$$  \mu<-s-n=\frac{n}{q}+\varepsilon -n=n\left(\frac{1}{q}-1\right)+\varepsilon =-\frac{n}{r}+\varepsilon\,, $$
where the last hold true for any $\epsilon>0$.
Taking $\varepsilon\rightarrow 0^+$ we have that $\mu\leq -n/r.$
\end{proof}

\begin{theorem}\label{even:condition} Let $X$ be a compact manifold of dimension $n.$  Let us consider a Fourier integral operator  $T\in  I^\mu_{1,0}(X;\Lambda) $ and let $r\in \mathbb{N}= (0,\infty)\cap \mathbb{Z}.$ Then $T:L^{2}(X)\rightarrow L^{2}(X)$ belongs to the Schatten class $S_r(L^2(X)),$  if and only if  $\mu <-n/r.$ 
    
\end{theorem}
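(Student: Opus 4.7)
The plan is to mirror the strategy used for the pseudo-differential case in Theorem \ref{even:condition:PDSO} and reduce the problem to the trace-class characterisation already established in Theorem \ref{th:trace:class}. The forward implication is immediate: if $\mu<-n/r$, Proposition \ref{Propo:Schatten} gives directly that $T\in S_r(L^2(X))$. Thus all the work lies in the converse direction.

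For the converse, suppose $T\in S_r(L^2(X))$ with $r\in\mathbb{N}$. The key observation, and the engine of the argument, is that composing $r$ copies of a Schatten-$r$ operator produces a trace class operator. Indeed, by H\"older's inequality for Schatten ideals,
\begin{equation*}
T^r=\underbrace{T\circ T\circ\cdots\circ T}_{r\text{ times}}\in \underbrace{S_{r}(L^2(X))\circ\cdots\circ S_{r}(L^2(X))}_{r\text{ times}}\subseteq S_{1}(L^2(X)),
\end{equation*}
so $T^r$ is of trace class on $L^2(X)$.

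On the calculus side, since $\Lambda$ is a local canonical graph (a graph of a symplectomorphism from $T^*X\setminus 0$ to itself), the iterated composition $\Lambda^r$ is itself a local canonical graph, and H\"ormander's composition theorem for Fourier integral operators with transversal canonical relations yields $T^r\in I^{r\mu}_{1,0}(X;\Lambda^r)$ with no loss in the order. Applying Theorem \ref{th:trace:class} to the trace class FIO $T^r$, the order $r\mu$ must satisfy $r\mu<-n$, that is, $\mu<-n/r$, which is the desired strict inequality.

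The main technical obstacle is ensuring that the composition $T^r$ really lives in $I^{r\mu}_{1,0}(X;\Lambda^r)$ with exactly the expected order $r\mu$ and with $\Lambda^r$ a bona fide canonical relation. For FIOs associated with a local canonical graph this is classical and no excess correction arises, so the reduction runs smoothly. If one wished to relax this to canonical relations satisfying only clean intersection with themselves, one would need to invoke the Weinstein--H\"ormander composition theorem, possibly with an excess correction to the order, which would have to be controlled before closing the inequality. In the non-composable setting a workaround would be to note that $\mu\leq -n/r$ is already granted by Lemma \ref{Invariance:trace:2}, so one only needs to exclude the borderline case $\mu=-n/r$; this could again be done by composing $T$ with a pseudo-differential operator or a Bessel potential of fractional order so that the product is a trace class FIO saturating the order bound of Theorem \ref{th:trace:class}, which is impossible.
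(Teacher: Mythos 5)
Your forward direction matches the paper's, and your converse follows the same core idea of composing a Schatten-$r$ operator $r$ times to land in the trace class and then invoking a trace-class characterisation. The divergence is that you iterate $T$ itself, whereas the paper first builds the auxiliary pseudo-differential operator $F=T^*T\sqrt{1-\Delta_g}^{-\mu}\in\Psi^\mu_{1,0}(X)\cap S_r$ and iterates that. This difference is not a detail but the crux of the matter.

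The gap is in the step where you apply Theorem~\ref{th:trace:class} to $T^r$ to conclude $r\mu<-n$. That theorem, read in the only way that makes it true, applies to operators of \emph{exact} order $\mu$; otherwise the zero operator (which lies in every $I^\mu$) already furnishes a counterexample. The clean-composition calculus does place $T^r$ in $I^{r\mu}_{1,0}(X;\Lambda^r)$, and $\Lambda^r$ is indeed a local canonical graph with no excess, so the Weinstein--H\"ormander caveat you raise is not the obstruction. The obstruction is that nothing forces $T^r$ to have exact order $r\mu$: identifying $\Lambda$ with $T^*X$ through the graph of the symplectomorphism $\chi$, the principal symbol of $T^r$ is, up to Maslov and Jacobian factors, the twisted product $\sigma_T\cdot(\sigma_T\circ\chi)\cdots(\sigma_T\circ\chi^{r-1})$, which can vanish identically for a non-elliptic $T$ (take $r=2$ and a microlocal cutoff $\sigma_T$ whose conic support is disjoint from its image under $\chi$). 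Then $T^2$ is smoothing, hence trivially trace class, and its $S_1$-membership places no constraint on $\mu$; in particular it does not exclude the borderline $\mu=-n/r$, which is precisely what must be ruled out. The paper's choice of $F$ is designed to bar this: $F$ is a pseudo-differential operator whose principal symbol is $|\sigma_T|^2|\det D\Phi|\,\Vert\xi\Vert^{-\mu}$, nonnegative and not identically zero whenever $T\ne 0$, and this stays nonvanishing upon raising to the $r$th power. Hence $F^r$ is a trace-class pseudo-differential operator of exact order $r\mu$, and Theorem~\ref{th:trace:class:PSDO} forces $r\mu<-n$.

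Your fallback at the end (compose with a Bessel potential) only reproves $\mu\le -n/r$, which Lemma~\ref{Invariance:trace:2} already gives, and does not exclude equality. The repair is to replace the iteration of $T$ by the iteration of $F$; note that $F\in S_r$ because $F=T^*\bigl(T\sqrt{1-\Delta_g}^{-\mu}\bigr)$ with the second factor an order-zero, hence $L^2$-bounded, Fourier integral operator.
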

\begin{proof} In view of Proposition \ref{Propo:Schatten}, the order condition  $\mu<-n/r,$ $0<r<\infty,$ implies that $T\in S_r(L^2(X)).$ Now, let us prove the converse statement, that is if $T\in S_r(L^2(X))$, then $m<-n/r.$  Note that the case $r=1$ has been proved in \ref{th:trace:class}. Now, we assume that $r> 1$ and that $r\in \mathbb{Z}.$ Note that $$F=T^*T \sqrt{1-\Delta_g}^{-\mu},$$ is a pseudo-differential operator of order $\mu.$ Note also that $T \sqrt{1-\Delta_g}^{-\mu}$ is a Fourier integral operator of order zero and then it is bounded on $L^2(X).$ Also, note that  $T^*\in S_r(L^2(X)),$ and in consequence  $F\in S_r(L^2(X)).$ To prove the theorem let us show that the borderline $\mu=-n/r$ in Lemma \ref{Invariance:trace:2} is not possible. 
Observe that
\begin{equation}
    F^r= F\circ F\circ\cdots F\in S_{r}(L^2(X))\circ\cdots\circ S_{r}(L^2(X))\subseteq S_{1}(L^2(X)),
\end{equation} where the composition $\circ$ is taken $r$-times. However, the order of the trace class operator $ F^r$ is $$\mu r=-n$$ and this contradicts the conclusion in Theorem  \ref{th:trace:class:PSDO}. So, necessarily $\mu<-n/r.$     
\end{proof}
\subsection{Distribution of eigenvalues of Fourier integral operators on $L^2$}\label{FIO:dist}  Next we analyse the distribution of eigenvalues of Fourier integral operators on $L^2(X).$ We record the following result due to Hermann Weyl (see \cite{Weyl1949}): if $T:H\rightarrow H$ is a bounded operator on a Hilbert space $H,$ and it belong to the Schatten class $S_r(H),$ then the eigenvalues $\{\lambda_j(T)\}$ of $T,$ and its singular values $\{s_j(T)\}$ satisfy the inequality
\begin{equation}\label{Weyl:ineq}
    \sum_{j=0}^{\infty}|\lambda_j(T)|^r\leq  \sum_{j=0}^{\infty}s_j(T)^r.
\end{equation}
{Weyl's inequality can also be presented in the following way: if $T:H\rightarrow H$ is a compact linear operator, then for any $\ell\in \mathbb{N}$ and all $0<p<\infty,$ one has the inequality $$ \sum_{j=0}^{\ell}|\lambda_j(T)|^p\leq  \sum_{j=0}^{\ell}s_j(T)^p.$$}
We have the following result.
\begin{theorem}\label{th:trace:class:dist} Let $X$ be a compact manifold of dimension $n,$ and let $0<r<\infty.$  Let us consider a Fourier integral operator  $T\in  I^\mu_{1,0}(X;\Lambda) $ of order $\mu<-n/r.$ Then the system of eigenvalues of $T,$ satisfies the rate of decay
\begin{equation}\label{j:r}
    |\lambda_j(T)|=O(j^{-\frac{1}{r}}).
\end{equation}    
\end{theorem}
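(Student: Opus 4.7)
The plan is to combine three ingredients already at our disposal: the sufficient condition for Schatten membership given in Proposition \ref{Propo:Schatten}, the classical Weyl inequality \eqref{Weyl:ineq}, and an elementary Cauchy-type tail argument analogous to the one carried out inside the proof of Theorem \ref{dist:2}. First, since the order satisfies $\mu<-n/r$, Proposition \ref{Propo:Schatten} yields the membership $T\in S_{r}(L^{2}(X))$, that is
\begin{equation*}
\sum_{j=0}^{\infty}s_{j}(T)^{r}<\infty.
\end{equation*}
By Weyl's inequality \eqref{Weyl:ineq} applied with exponent $r$, the eigenvalues inherit this summability:
\begin{equation*}
\sum_{j=0}^{\infty}|\lambda_{j}(T)|^{r}\leq \sum_{j=0}^{\infty}s_{j}(T)^{r}<\infty.
\end{equation*}

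Next, following the convention of Remark \ref{ordered:dec}, I would arrange $\{|\lambda_{j}(T)|\}_{j\in\mathbb{N}_{0}}$ in non-increasing order, which is permissible since $T$ is compact. The monotonicity then allows the standard tail estimate: for every $k\in\mathbb{N}$,
\begin{equation*}
k\,|\lambda_{2k}(T)|^{r}=\sum_{j=1}^{k}|\lambda_{2k}(T)|^{r}\leq \sum_{j=1}^{k}|\lambda_{k+j}(T)|^{r}\xrightarrow[k\to\infty]{}0,
\end{equation*}
by the Cauchy criterion applied to the convergent series above. A parallel estimate, as performed in the proof of Theorem \ref{dist:2}, handles the odd indices $2k+1$:
\begin{equation*}
\Bigl(k+\tfrac{1}{2}\Bigr)|\lambda_{2k+1}(T)|^{r}\leq \tfrac{1}{2}|\lambda_{2k+1}(T)|^{r}+\sum_{j=0}^{k}|\lambda_{k+j+1}(T)|^{r}\xrightarrow[k\to\infty]{}0.
\end{equation*}

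Combining both estimates gives $j\,|\lambda_{j}(T)|^{r}\to 0$ as $j\to\infty$, which immediately yields the claimed decay rate $|\lambda_{j}(T)|=o(j^{-1/r})$, and in particular $|\lambda_{j}(T)|=O(j^{-1/r})$ as in \eqref{j:r}. There is no serious obstacle here: the argument is entirely a bookkeeping consequence of the Schatten class membership already established in Proposition \ref{Propo:Schatten} and Weyl's classical inequality, together with the same Cauchy tail trick that was exploited in Theorem \ref{dist:2} to pass from absolute summability of an ordered sequence to its pointwise decay.
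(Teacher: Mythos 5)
Your proposal is correct and follows essentially the same route as the paper: both apply Proposition \ref{Propo:Schatten} to obtain $T\in S_r(L^2(X))$, then pass to $r$-summability of the eigenvalues via Weyl's inequality \eqref{Weyl:ineq}, and finally deduce the decay rate from $\lim_{j\to\infty}j|\lambda_j(T)|^r=0$. The only difference is cosmetic: the paper asserts this last limit directly, while you explicitly reproduce the even/odd Cauchy-tail argument that the paper spells out earlier in the proof of Theorem \ref{dist:2}.
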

\begin{proof}
    In view of the order condition $\mu<-n/r,$ we have that $T$ belongs to the Schatten class $S_r(L^2(X)),$ see Proposition \ref{Propo:Schatten}.  Then, if $\{s_j(T)\}$ denotes the system of singular values of $T,$ we have that $\sum_{j=0}^{\infty}s_j(T)^r<\infty.$ In consequence, using Weyl's inequality \eqref{Weyl:ineq}, we have that 
    the system of eigenvalues $\{\lambda_j(T)\}$ of $T$ is $r$-summable, that is $\sum_{j=0}^{\infty}|\lambda_j(T)|^r<\infty.$ In consequence we have that 
    $$\lim_{j\rightarrow\infty}j|\lambda_j(T)|^r=0,$$ from which we deduce \eqref{j:r}. The proof is complete. 
\end{proof}

\bibliographystyle{amsplain}

\end{document}